\newcommand{\setR}{{\mathbb R}}
\newcommand{\setN}{{\mathbb N}}
\newcommand{\hopen}{[0,\infty)}
\newcommand{\diff}{\operatorname{D}}
\newcommand{\dv}{\operatorname{div}}
\newcommand{\sol}{u}
\newcommand{\aux}{v}
\newcommand{\mob}{\mathbf m}
\newcommand{\argmin}{\operatorname{argmin}}
\newcommand{\supp}{\operatorname{supp}}
\newcommand{\potential}{{\mathbf V}}
\newcommand{\potentialeps}{\mathbf V_{\kern-2pt\eps}}
\newcommand{\altpotential}{{\mathbf V}}
\newcommand{\nonlin}{{\mathbf N}}
\newcommand{\auxil}{{\mathcal F}}
\newcommand{\auxilmin}{{F}}
\newcommand{\nml}{{\mathbf n}}
\newcommand{\velo}{{\mathbf v}}
\newcommand{\SG}{\mathbf{S}}
\newcommand{\Dom}{\mathrm{Dom}}
\newcommand{\curv}{\gamma}
\newcommand{\dens}{\rho}
\newcommand{\pot}{\varphi}
\newcommand{\Act}{\mathbf A}
\newcommand{\act}[2]{\mathbf{A}^{\kern-2pt#1}_{#2}}
\newtheorem{theorem}{Theorem}
\newtheorem{proposition}{Proposition}[section]
\newtheorem{corollary}[proposition]{Corollary}
\newtheorem{lemma}[proposition]{Lemma}
\newtheorem{definition}{Definition}
\newtheorem{remark}[proposition]{Remark}
\newcommand{\R}{\mathbb{R}}
\newcommand{\N}{\mathbb{N}}
\newcommand{\HH}{\mathscr{H}}
\newcommand{\tauV}{{\kern-3pt\tau}}
\renewcommand{\restriction}[1]{\lower3pt\hbox{$|_{#1}$}}
\newcommand{\Leb}[1]{{\mathscr L}^{#1}} 
\newcommand{\weakto}{\rightharpoonup}
\newcommand{\eps}{{\varepsilon}}
\renewcommand{\to}{\rightarrow}
\definecolor{lblue}{rgb}{0.5,0.5,1}
\renewcommand{\d}{{\mathrm d}}
\newcommand{\Rd}{{\R^d}}
\newcommand{\loc}{{\mathrm{loc}}}
\newcommand{\restr}[1]{\lower3pt\hbox{$|_{#1}$}}
\newcommand{\mass}{\mathfrak m}
\newcommand{\nchi}{{\raise.3ex\hbox{$\chi$}}}
\newcommand{\Mob}{\mob}
\newcommand{\wass}{{{\mathbf W}_{\kern-1pt\mob}}}
\newcommand{\wassdelta}{{{\mathbf W}_{\kern-1pt\mob_\delta}}}
\newcommand{\wassset}[1]{{{\mathbf W}_{\kern-1pt\mob,#1}}}
\newcommand{\energy}{\altenergy} 
\newcommand{\energyo}{\altenergy_0} 
\newcommand{\altenergy}{{\mathbf E}}
\newcommand{\entropy}{\altentropy} 
\newcommand{\altentropy}{{\mathbf U}}
\newcommand{\conv}{{\rm conv}}
\newcommand{\conc}{{\rm conc}}
\newcommand{\incr}{{\rm incr}}
\newcommand{\decr}{{\rm decr}}
\newcommand{\neigh}[1]{\Omega_{[#1]}}
\newcommand{\deltan}{n}
\newcommand{\massOmega}{s_0}
\newcommand{\MobOmega}{{\Mob(s_0)}}
\newcommand{\mmspace}{\admdens} 
\newcommand{\regdens}{X^r(\Omega)}
\newcommand{\regdensx}[1]{X^r({#1})}
\newcommand{\admdens}{X(\Omega)}
\newcommand{\admdensx}[1]{X(#1)}
\newcommand{\admmeas}{\admdens} 
\newcommand{\foralltext}{\text{for all }}
\newcommand{\Lipschitz}{LSC}
\newcommand{\GGG}{\color{black}}
\newcommand{\EEE}{\color{black}} 
\newcommand{\gGG}[1]{#1}
\newcommand{\SSS}{\color{red}} 
\newcommand{\nodaniel}{\color{black}}
\author{Stefano Lisini}
\address{Stefano Lisini \\ Dipartimento di Matematica ``F. Casorati''\\ Universit\`a degli Studi di Pavia \\ via Ferrata 1\\ 27100 Pavia \\ Italy}
\email{stefano.lisini@unipv.it}
\author{Daniel Matthes}
\address{Daniel Matthes \\Zentrum Mathematik \\ Technische Universit\"at M\"unchen \\
D-85747 Garching bei M\"unchen\\ Germany}
\email{matthes@ma.tum.de}
\author{Giuseppe Savar\'e}
\address{Giuseppe Savar\'e \\  Dipartimento di Matematica ``F. Casorati''\\ Universit\`a degli Studi di Pavia \\via Ferrata 1\\ 27100 Pavia \\ Italy \footnote{S.L.\ and G.S.\ have been partially supported through PRIN08-grant from MIUR for the project \emph{Optimal transport theory, geometric and functional inequalities, and applications}.}}
\email{giuseppe.savare@unipv.it}
\title[Cahn-Hilliard and thin film equations as gradient flow]
{Cahn-Hilliard and Thin Film equations with nonlinear mobility as gradient flows in weighted-Wasserstein metrics}
\begin{document}

\begin{abstract}
  In this paper, we establish a novel approach to proving existence of non-negative weak solutions
  for degenerate parabolic equations of fourth order,
  like the Cahn-Hilliard and certain thin film equations.
  The considered evolution equations are in the form of a gradient flow for a perturbed Dirichlet energy
  with respect to a Wasserstein-like transport metric,
  and weak solutions are obtained as curves of maximal slope.
  Our main assumption is that the mobility of the particles is a concave function of their spatial density.
  A qualitative difference of our approach to previous ones is that
  essential properties of the solution
  --- non-negativity, conservation of the total mass and dissipation of the energy ---
  are automatically guaranteed by the construction from minimizing movements in the energy landscape.
\end{abstract}

\maketitle

\section{Introduction and statement of main results}
This paper is concerned with the following class of initial-boundary value problems for non-linear fourth order parabolic equations,
\begin{align}
  \label{eq.formalflow}
  &\partial_t\sol = -\dv\big( \mob(\sol) \diff(\Delta\sol -G'(\sol)) \big) \quad \text{in $(0,\infty)\times\Omega$}, \\
  \label{eq.bc}
  &\nml\cdot\diff\sol = 0, \quad
  \nml\cdot\big(\mob(\sol)\diff(\Delta\sol - G'(\sol))\big) = 0
  \quad \text{on $(0,\infty)\times\partial\Omega$}, \\
  \label{eq.ic}
  &\sol(0,x) = \sol_0(x), \quad \text{in }\Omega .
\end{align}
The problem \eqref{eq.formalflow}--\eqref{eq.ic} is posed on a bounded, smooth \EEE convex domain $\Omega\subset\setR^d$. $\nml$ denotes the normal vector field to the boundary $\partial\Omega$. \EEE
The sought solution $\sol:\hopen\times\Omega\to\setR$ is subject to the constraint $0\leq\sol(t,x)\leq M$,
where either $M>0$ is a given number, or $M=+\infty$.
The \emph{mobility} $\mob$ is a non-negative concave function $\mob:(0,M)\to\setR_+$
that vanishes at $0$, and also at $M$ if $M<\infty$.
The mobility $\mob$ and the \emph{free energy} $G:(0,M)\to\R$ are subject to certain regularity assumptions, specified below.
Introducing the \emph{pressure} $P$ satisfying
\begin{align}
  \label{eq.defpressure}
  P'(s)=\mob(s) G''(s),
\end{align}
equation \eqref{eq.formalflow} can be rewritten in the more familiar form
\begin{align}
  \label{eq.formalflow2}
  \partial_t\sol = -\dv\big( \mob(\sol) \diff\Delta\sol \big) + \Delta P(\sol).
\end{align}
Equations of the form \eqref{eq.formalflow} or \eqref{eq.formalflow2} arise,
for instance,
as hydrodynamic approximation to models for many-particle systems in gas dynamics,
and also in lubrication theory.
In particular, the classical Cahn-Hilliard equation
for phase separation in a binary alloy as well as the (de)stabilized thin film equation are of the shape \eqref{eq.formalflow};
we comment on these special cases further below.
The value of the solution $\sol(t,x)$ represents a particle density,
or the fraction of one component of a binary alloy (in the case of the Cahn-Hilliard equation),
or the height of the film (in the case of the thin film equation) at time $t\geq0$ and location $x\in\Omega$.

There is a rich literature on the mathematical structure of Cahn-Hilliard, thin film and related equations.
In particular, the techniques developed in the seminal papers by Elliott and Garcke \cite{ElliGark} and by Bernis and Friedman \cite{BernFrie}
have been proven extremely powerful to carry out existence analysis,
and have been extended by many other authors afterwards.
As a core feature, these techniques allow to replace \eqref{eq.formalflow} by a family of regularized problems with smooth solutions $\sol_\delta$
that satisfy certain bounds which produce the desired constraint $0\le\sol\le M$ in the limit $\delta\downarrow0$.
We emphasize that this bound does not come for free since solutions to fourth order equations do not obey comparison principles in general.

More specifically, in the existence proof for the Cahn-Hilliard equation \cite{ElliGark}, 
the degenerate mobility $\mob$ is replaced by a strictly positive approximation $\mob_\delta$ defined on all $\setR$.
The resulting parabolic problems are non-degenerate and possess global and smooth solutions $\sol_\delta$, which, however, may attain arbitrary real values.
Using additional a priori estimates, it is then shown
that the integral of $\sol_\delta$ in the region where $\sol<0$ or $\sol>M$ converges to zero as $\mob_\delta$ approaches $\mob$,
which yields $0\le\sol(t)\le M$ in the limit.
In fact, a corollary of this method of proof is that solutions to \eqref{eq.formalflow} with a sufficiently degenerate mobility function $\mob$
preserve the \emph{strict} inequalities $0<\sol(t)<M$ for all times $t\ge0$.
This property has been used in the existence proofs for thin film equations \cite{BernFrie,DPasGarkGrun},
where the original mobility is approximated by very degenerate $\mob_\delta$.

The techniques from \cite{BernFrie,ElliGark} rely on the dissipation of certain Lyapunov functionals by solutions to \eqref{eq.formalflow}.
One distinguished Lyapunov functional is a perturbed Dirichlet energy,
which is defined on functions $\sol\in H^1(\Omega)$ with $0\leq\sol\leq M$ by
\begin{align}
  \label{eq.e1}
  \altenergy[\sol] =
  \displaystyle{\frac12\int_\Omega|\diff\sol(x)|^2\d x +  \int_\Omega G(\sol(x))\d x .}
\end{align}
This energy and its dissipation provide regularity estimates.
Another Lyapunov functional introduced in \cite{BernFrie,ElliGark} and since then widely used the literature is
\begin{align}
  \label{eq.altentropy}
  \altentropy[\sol]:= \int_\Omega U(\sol(x))\d x
  \quad\text{with}\quad
  U''(s)=\frac1{\mob(s)}.
\end{align}
If the mobility $\mob(s)$ degenerates sufficiently strongly for $s\downarrow0$ and $s\uparrow M$,
then $\altentropy$ allows to control the solution $\sol$ in the zones where $\sol$ is close to $0$ or $M$.
The functional $\altentropy$ has thus become a key tool for proving the bounds $0\le\sol\le M$.

Here we develop an alternative approach to existence which avoids the
cumbersome discussion of the propagation of the bound $0\le\sol\le M$
\GGG and shows a new interesting variational structure behind
equations of the form \eqref{eq.formalflow}.
\EEE
Our starting point is the classical observation that \eqref{eq.formalflow} is in the shape of a gradient flow for $\altenergy$
on the space of non-negative density functions 
of fixed mass. 
On a purely formal level, the corresponding metric tensor is readily determined:
to a tangential vector $\velo:=\partial_s\dens(0)$
to a smooth curve $\dens:(-\eps,\eps)\to L^1(\Omega)$ of strictly positive densities $\dens(s)$ at $\dens_0=\dens(0)$,
it assigns the length
\begin{align}
  \label{eq.formalmetric}
  \|\velo\|^2 = \int_\Omega |\diff\pot(x)|^2\mob(\dens_0(x))\d x, \quad
  \text{with $\pot$ satisfying} \quad
  \gGG-\dv\big( \mob(\dens_0(x))\diff\pot(x))\big) =
  \velo
  \gGG{\quad\text{in $\Omega$}}
\end{align}
and variational boundary conditions on $\partial\Omega$.

In the particular case of a constant mobility $\mob\equiv1$,
this is simply the dual of the Sobolev seminorm in $W^{1,2}(\Omega)$,
and one can work in the the well know setting of gradient flows in Hilbert spaces, see e.g.\ \cite{BrezisOMM}.
For the linear mobility $\mob(s)=s$, the tensor \eqref{eq.formalmetric} is induced by a non-Hilbertian metric, 
namely the celebrated $L^2$-Wasserstein distance, see \cite{Ott01}. 
In this framework,
weak solutions to specific cases of \eqref{eq.formalflow} have been obtained as curves of steepest descent in the energy landscape of $\altenergy$;
see \cite{GiacOtto,MattMCanSava} for respective results on the Hele-Shaw flow.

For more general mobilities, 
the existence proof presented here seems to be the first based on the gradient flow structure of \eqref{eq.formalflow}
with respect to a metric that is not the $L^2$-Wasserstein distance
\GGG nor a flat Hilbertian one. \EEE
It has been proven only recently by Dolbeault, Nazaret and the third author \cite{DolbNazaSava}
that even for certain \emph{non-linear} mobilities $\mob$,
the formal metric structure indicated in \eqref{eq.formalmetric}
still leads to a genuine metric $\wass$ on the space of positive measures.
One needs to assume, however, that $\mob$ is a concave function
\GGG to get nice analytic and geometric properties of $\wass$: they
have been studied in \cite{CarrLisiSavaSlep} and \cite{LM} and
we review selected results in Section \ref{sct.prelims} below.

\EEE
The goal of this paper is to prove rigorously that weak solutions to \eqref{eq.formalflow}--\eqref{eq.ic} can be obtained
by the variational \emph{minimizing movement/\gGG{JKO} scheme} under suitable conditions on the nonlinear concave function $\mob$.
The terminology \emph{minimizing movement scheme} is due to De Giorgi \cite{DeGiorgi1993}, whereas \emph{JKO scheme} enters in common use after the paper \cite{JordKindOtto}.
\EEE
Preservation of the total mass, dissipation of the energy and, most notably, non-negativity of the density along the solution
are direct consequences of the applied construction:
the solution is a weak limit of time-discrete energy minimizing curves that lie in the convex cone of non-negative densities.
The difficulty of this approach consists in proving a posteriori that the curve of maximal slope indeed corresponds to a weak solution.
For this, a priori estimates resulting from the dissipation of $\altentropy$ are employed.

\subsection{Hypotheses}
We recall that
\begin{center}
  either $M>0$ is a given number, or $M=+\infty$.
\end{center}
All appearing measures are assumed to be absolutely continuous with respect to the Lebesgue measure $\Leb d$, 
and we identify them with their Lebesgue densities on $\Omega$.
The densities have fixed total mass $\mass>0$, and are bounded from above by $M$ if the latter is finite. 
Thus our ambient space will be the metric space $(\admdens,\wass)$ where
\begin{equation}
  \label{eq:9}
  \admdens:=\Big\{u\in L^1(\Omega):0\le u\le M\quad\text{a.e.\ in $\Omega$},\quad \int_\Omega u\,\d x=\mass\Big\}.
\end{equation}
The possible choices for mobility functions $\mob:(0,M)\to\setR_+:=(0,\infty)$ are subject to the following conditions:
\begin{equation}
\label{Mob1}
  \tag{M}
  \begin{aligned}
    &\Mob \text{ is concave,} \quad \Mob \in C^\infty(0,M), \quad
    \Mob>0 \mbox{ in }(0,M),\\
    &\Mob(0):=\lim_{s\downarrow
      0}\Mob(s)=0,\qquad
    \Mob(M):=\lim_{s\uparrow M}\Mob(s) = 0 \qquad
    \GGG \text{if $M<+\infty$}.
  \end{aligned}
\end{equation}
Moreover, we say that the mobility $\mob$ satisfies a {\Lipschitz} condition 
(i.e.\ $\mob$ is Lipschitz and $\mob^2$ is Semi-Convex)
if
\begin{equation}\label{MobL}
  \tag{M-\Lipschitz}
  \sup_{s\in(0,M)}|\mob'(s)|<+\infty \quad\text{and}\quad
  \sup_{s\in(0,M)}\big(-\mob''(s)\mob(s)\big)<+\infty.
\end{equation}
The restriction to concave mobilities in \eqref{Mob1} is necessary,
since only for those, the corresponding metric $\wass$ is well-defined.
Notice that this hypothesis is somewhat opposite to the one made in \cite{ElliGark},
where an asymptotic behaviour $\mob(s)\sim s^\alpha$ for $s\to 0$ with $\alpha\geq1$ has been assumed.
Typical examples for mobility functions with finite $M>0$ are $\Mob(r)=r(M-r)$, or,
more generally, $\Mob(r)=r^{\alpha_0}(M-r)^{\alpha_1}$ with exponents $\alpha_0,\alpha_1\in (0,1]$.
These mobilities satisfy \eqref{MobL} iff $\alpha_0=\alpha_1=1$.
In the case that $M=+\infty$, the mobility $\Mob$ is nondecreasing because it is concave and strictly positive in $(0,+\infty)$.
Typical examples are $\Mob(r)=r^\alpha$ with $\alpha\in (0,1]$;
such a mobility is {\Lipschitz} only in the Wasserstein case $\alpha=1$.


Concerning the free energy $G\in C^2(0,M)$
and the associated pressure $P$ with \eqref{eq.defpressure},
we assume that there exist a constant $C\ge 0$ and an exponent $q>2$ with $q<2d(d-4)$ if $d>4$
such that
\GGG
\begin{equation}\label{Pre1}
  \tag{G}
  \begin{aligned}
    \Mob\, G''\ge -C
    &\text{ in }(0,M),
    \quad P\in C^0([0,M]) &&\quad \text{if }M<\infty,\\
    \Mob \, G''\ge -C(1+\Mob) &\text{ in }(0,\infty),\quad P\in C^0([0,\infty)),\quad
    \lim_{s\to\infty}\frac{P(s)}{s^q+|G(s)|}=0&&\quad\text{if $M=+\infty$}.
  \end{aligned}
\end{equation}
The condition \eqref{Pre1} yields in particular (see \S \ref{subsec:UGproperties})
\begin{equation}
  \label{Pre1cons}
  G\in C^0([0,M])\quad\text{if $M<\infty$};\quad
  G(s)\ge -C(1+s^2)\quad\text{for every $s>0$ if $M=+\infty$}.
\end{equation}
\EEE
Examples for sensible choices of $G$ (and $P$) fulfilling these assumptions are given after the statements of our main results.

\renewcommand{\mu}{u}
\renewcommand{\nu}v

\subsection{The minimizing movement approximation and the existence result for {\Lipschitz} mobilities}
\label{subsec:MM}
The \emph{minimizing movement/{JKO} scheme} is a variational algorithm
\EEE to obtain
a time-discrete approximation (of given step size $\tau>0$) to a curve of steepest descent, see \cite{AmbrGiglSava}.
In the situation at hand, we start from the initial condition $\mu_0\in \admdens$ with $\altenergy[\mu_0]<+\infty$
and define inductively
\begin{align}\label{eq.mm}
  \mu_\tau^0:=\mu_0, \quad
  \mu_\tau^{n+1}:=\argmin \Psi^n_\tau\in \admdens
  \quad\text{where}\quad
  \Psi^n_\tau(\nu):=\frac1{2\tau}\wass(\mu_\tau^n,\nu)^2 + \altenergy[\nu] ,
\end{align}
and we set $\energy[\mu]:=+\infty$ if $\mu\not\in H^1(\Omega)$.
The approximation $\bar \mu_\tau:\hopen\to\mmspace$ is defined by constant interpolation,
using $\bar\mu_\tau(t)=\mu_\tau^n$ for $(n-1)\tau<t\le n\tau$.
\begin{theorem}
  \label{thm.main}
  Assume that $\Omega$ is a smooth \EEE bounded convex open subset of $\R^d$,
  the mobility function  $\mob$ satisfies \eqref{Mob1} and \eqref{MobL},
  and the free energy $G$ satisfies \eqref{Pre1}.

  Then, for any initial condition $\sol_0\in \admdens$ of finite energy $\altenergy[\sol_0]<+\infty$,
  the scheme \eqref{eq.mm} admits time-discrete solutions $\bar\mu_\tau$ for all $\tau>0$.
  For every sequence $\tau_n\downarrow 0$ there exists a subsequence,
  still denoted by $\tau_{n}$,
  and a function $\sol$ satisfying:
  \begin{align}
    \label{spacesol}
    &\sol\in L^2_\loc([0,\infty);H^2(\Omega))\cap
    C^0_w\EEE([0,\infty);H^1(\Omega))\cap {\rm AC}^2_\loc([0,\infty);\admdens)\\
    &\bar\sol_{\tau_{n}}(t) \to \sol(t) \text{ strongly in }
    L^2(\Omega) \text{ and weakly in } H^1(\Omega)
    \quad \text{for all } t\in [0,+\infty), \\
    &\bar\sol_{\tau_{n}} \to \sol \text{ strongly in }
    L^2(0,T;H^1(\Omega))\text{ and weakly in } L^2(0,T;H^2(\Omega))
    \qquad \text{for all $T>0$}.
  \end{align}
  The energy satisfies the bound
  \begin{equation}\label{eq.energydecay}
    \altenergy[\sol(t)]\leq\altenergy[\sol_0] \qquad \foralltext\, t\geq 0,
  \end{equation}
  there exists a decreasing function $\varphi:[0,+\infty)\to \R$ such that
  \begin{equation}\label{eq.energydecay2}
    \varphi(t)\geq\altenergy[\sol(t)] \qquad \foralltext\, t\geq 0,
  \end{equation}
  and
  \begin{equation}\label{eq.energyconv}
    \altenergy[\bar\sol_{\tau_n}(t)]\to\altenergy[\sol(t)] \qquad\mbox{for a.e. } t\geq 0.
  \end{equation}
  Finally $\sol$ satisfies the equation \eqref{eq.formalflow}
  and the boundary conditions \eqref{eq.bc} in the following weak sense:
  \begin{align}\label{eq.weak}
    \int_0^{+\infty} \int_\Omega \partial_t\zeta\,\sol\d x\,\d t
    =  \int_0^{+\infty} \int_\Omega \Delta\sol\dv\big(\mob(\sol)\diff\zeta\big)\d x\,\d t
    - \int_0^{+\infty} \int_\Omega P(\sol) \Delta\zeta\d x\,\d t
  \end{align}
  for every test function $\zeta\in C^\infty_c\big((0,+\infty)\times\overline\Omega\big)$ such that $\diff \zeta\cdot\nml=0$ on $\partial\Omega$.
\end{theorem}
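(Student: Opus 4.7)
The strategy follows the classical metric-gradient-flow program adapted to the weighted Wasserstein space $(\admdens,\wass)$: establish well-posedness and uniform compactness of the JKO iterates; derive a discrete Euler–Lagrange equation at each step; upgrade regularity by a flow-interchange argument; and finally pass to the limit $\tau\downarrow 0$.

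\emph{Step 1: discrete minimizers and metric compactness.} I would obtain $\mu_\tau^{n+1}$ by the direct method, exploiting that $\Psi_\tau^n$ is lower semicontinuous on $\admdens$ with respect to weak $L^1$-convergence: the $\wass^2$-term by the narrow LSC of $\wass$ (as in \cite{CarrLisiSavaSlep,LM}), the Dirichlet part by weak $H^1$-LSC, and $\int G(\nu)\,\d x$ by Fatou combined with \eqref{Pre1cons}. Coercivity comes either from the $L^\infty$-bound $\sol\leq M$ when $M<\infty$, or from the quadratic lower bound on $G$ together with the mass constraint when $M=\infty$. The discrete minimality inequality
$$\tfrac{1}{2\tau}\wass(\mu_\tau^n,\mu_\tau^{n+1})^2+\altenergy[\mu_\tau^{n+1}]\leq\altenergy[\mu_\tau^n]$$
telescopes to a uniform $H^1$-bound on $\bar\sol_\tau$ and to $\sum_n \wass(\mu_\tau^n,\mu_\tau^{n+1})^2\leq 2\tau(\altenergy[\sol_0]+C)$, yielding $1/2$-Hölder equicontinuity in $\wass$ on compact time intervals. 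Combining this with Rellich compactness of bounded $H^1$-sets in $L^2$ and a refined Arzelà–Ascoli extraction, one obtains a subsequence along which $\bar\sol_{\tau_n}(t)\to\sol(t)$ strongly in $L^2(\Omega)$ and weakly in $H^1(\Omega)$ for every $t\geq 0$, with $\sol\in \mathrm{AC}^2_\loc([0,\infty);(\admdens,\wass))\cap C^0_w([0,\infty);H^1(\Omega))$.

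\emph{Step 2: discrete Euler–Lagrange.} For $\xi\in C^\infty(\overline\Omega)$ with $\diff\xi\cdot\nml=0$ on $\partial\Omega$, I perturb $\mu_\tau^{n+1}$ via the weighted continuity equation $\partial_s\nu_s+\dv(\mob(\nu_s)\diff\xi)=0$; assumption \eqref{MobL} guarantees that $\nu_s$ stays in $\admdens$ for $s$ small and remains $\wass$-absolutely continuous with the expected metric derivative. The chain rule for $\wass^2$ combined with the minimality of $\mu_\tau^{n+1}$, together with integration by parts exploiting the natural Neumann trace $\nml\cdot\diff\mu_\tau^{n+1}=0$ arising as the variational boundary condition, yields the discrete weak identity
$$\int_\Omega \frac{\mu_\tau^{n+1}-\mu_\tau^n}{\tau}\xi\,\d x=\int_\Omega \Delta\mu_\tau^{n+1}\dv\bigl(\mob(\mu_\tau^{n+1})\diff\xi\bigr)\,\d x-\int_\Omega P(\mu_\tau^{n+1})\Delta\xi\,\d x.$$

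\emph{Step 3: $H^2$ regularity via flow interchange and limit passage — the main obstacle.} To make sense of the nonlinear term $\Delta\sol\,\dv(\mob(\sol)\diff\xi)$ in the limit I need a uniform $L^2(H^2)$-bound, which I would obtain by a flow interchange against the entropy $\altentropy$ in \eqref{eq.altentropy}: since $U''=1/\mob$, its $\wass$-gradient flow coincides with the heat equation with Neumann data. Testing the JKO-minimality of $\mu_\tau^{n+1}$ against the heat-semigroup perturbation $\sol_s$ starting at $\mu_\tau^{n+1}$ produces
$$\altenergy[\mu_\tau^n]-\altenergy[\mu_\tau^{n+1}]\geq -\tau\tfrac{\d^+}{\d s}\Big|_{s=0}\altenergy[\sol_s]=\tau\int_\Omega\bigl(|\Delta\mu_\tau^{n+1}|^2-G'(\mu_\tau^{n+1})\Delta\mu_\tau^{n+1}\bigr)\,\d x,$$
where convexity of $\Omega$ is used to discard the boundary term generated by the integration by parts. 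Telescoping in $n$ and absorbing the $G'$-contribution via $\mob G''\geq -C$ from \eqref{Pre1} together with Young's inequality provide the desired $L^2(H^2)$-bound. An Aubin–Lions argument then upgrades to strong $L^2(H^1)$-convergence, which combined with \eqref{MobL} and the growth condition on $P$ in \eqref{Pre1} yields strong $L^p$-convergence of $\mob(\bar\sol_\tau)\to\mob(\sol)$ and $P(\bar\sol_\tau)\to P(\sol)$; passing $\tau_n\downarrow 0$ in the discrete identity above then delivers \eqref{eq.weak}. Finally, \eqref{eq.energydecay} follows from LSC and the telescoped discrete dissipation; the decreasing majorant $\varphi$ in \eqref{eq.energydecay2} is built by monotone interpolation of the discrete energies, and \eqref{eq.energyconv} from a standard monotone-convergence argument for l.s.c.\ functionals.
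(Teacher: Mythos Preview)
Your Steps 1 and 3 track the paper's argument closely (modulo a slip in Step 3: the left-hand side of your displayed flow-interchange inequality should be $\altentropy[\mu_\tau^n]-\altentropy[\mu_\tau^{n+1}]$, not $\altenergy$; the interchange bounds the change of the \emph{auxiliary} functional $\altentropy$ by the dissipation of $\altenergy$ along the heat flow).

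The real gap is Step 2. You propose to obtain the discrete Euler--Lagrange identity by perturbing $\mu_\tau^{n+1}$ along the nonlinear conservation law $\partial_s\nu_s+\dv(\mob(\nu_s)\diff\xi)=0$ and invoking a ``chain rule for $\wass^2$''. This is precisely the variation the paper tries first and rejects: for nonlinear mobilities the potential energy $\altpotential[\sol]=\int V\sol$ is \emph{never} geodesically $\lambda$-convex in $(\admdens,\wass)$ for any $\lambda\in\setR$ (see \cite[\S2.3]{CarrLisiSavaSlep}), so the associated flow does not satisfy an EVI and the flow-interchange machinery does not apply. Nor is a direct first-variation argument available: the conservation law is genuinely hyperbolic and may form shocks, $\mu_\tau^{n+1}$ is only in $H^2(\Omega)$ and may touch $0$ or $M$, and no chain rule for $\frac{\d}{\d s}\wass^2(\mu_\tau^n,\nu_s)$ along such flows has been established in this metric. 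The assertion that \eqref{MobL} ``guarantees that $\nu_s$ stays in $\admdens$ for $s$ small and remains $\wass$-absolutely continuous with the expected metric derivative'' is not justified.

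The paper's cure is to replace $\altpotential$ by the viscous regularization $\potentialeps=\altpotential+\eps\altentropy$, whose gradient flow is the uniformly parabolic problem $\partial_s v=\dv(\mob(v)\diff V)+\eps\Delta v$. Under \eqref{MobL} this flow \emph{is} a $\lambda_\eps$-flow with $\lambda_\eps\ge -K/\eps$ (Proposition~\ref{prp.potential}), so flow interchange yields an \emph{inequality} version of your discrete identity with an extra error of order $\tau/\eps$. One then passes $\tau\downarrow0$ and $\eps\downarrow0$ simultaneously (e.g.\ $\eps=\sqrt\tau$) and replaces $V$ by $-V$ to convert the inequality into the equality \eqref{eq.weaker}. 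Without this regularization step your derivation of the weak formulation does not go through.
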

\begin{remark}
  \upshape
  We add a few comments on the previous result: 
  \begin{itemize}
  \item $C^0_w([0,\infty);H^1(\Omega))$ denotes the space of
    weakly continuous curves $u:[0,\infty)\to H^1(\Omega)$.
  \item A curve $u:[0,\infty)\to \admdens$ belongs to ${\rm AC}^2_\loc(0,\infty;\admdens)$ 
    if there exists a function $g\in L^2_\loc([0,\infty))$ such that
    \begin{equation}
      \label{eq:10}
      \wass(u(s),u(t))\le \int_s^t g(r)\, \d r\quad\foralltext\
      0\le s\le t<\infty.
    \end{equation}
  \item The condition $u \in {\rm AC}^2_\loc(0,\infty;\admdens)$ implies that
    \begin{equation}\label{eq.maxest}
      0\leq \sol\leq M, \qquad \int_\Omega \sol(t,x)\d x=\mass \quad \mbox{ for all }  t\geq 0.
    \end{equation}
  \item The condition \eqref{spacesol} implies that
    the initial datum in \eqref{eq.ic} is attained in the sense that
    $\sol(t)$ converges to $\sol_0$ in $\admdens$ with respect to the distance $\wass$
    and weakly in $H^1(\Omega)$ as $t\downarrow0$.
  \item Since $\sol\in L^2_\loc([0,\infty);H^2(\Omega))$ and $\mob$ is {\Lipschitz},
    $\dv\big(\mob(\sol)\diff\zeta\big)\in L^2_{\loc}([0,+\infty);L^2(\Omega))$.
  \item Even in the case $M=+\infty$,
    \eqref{eq.energydecay}, the asymptotic behaviour \eqref{Pre1},
    the lower bound \eqref{Pre1cons} and  
    the Sobolev embedding of $H^2(\Omega)$ in $L^q(\Omega)$
    imply that $P(\sol)\in L^2_\loc([0,+\infty);L^1(\Omega))$.
  \end{itemize}
\end{remark}
The main examples that fits into the framework of Theorem \ref{thm.main} are the classical Cahn-Hilliard equations \cite{CahnHill}:
For the mobility, one chooses $\Mob(r)=r(1-r)$, so $M=1$.
Typical choices for the free energy $G$ are the double well potential,
\begin{align*}
  G(r) = \theta r^2(1-r)^2, 
\end{align*}
yielding a model for the phase separation for a binary alloy,
\begin{align}
  \label{eq.cahn2}
  \partial_t \sol = -\dv\big( \sol(1-\sol)\diff\Delta\sol \big) + \theta\Delta \sol^2(1-\sol)^2;
\end{align}
or the function
\begin{align*}
  G(r) = \theta \big( r\log r + (1-r)\log(1-r) \big), \qquad \theta \in \R,
\end{align*}
that lead to an equation for the volume fraction of one component in binary gas mixture,
\begin{align}
  \label{eq.cahn}
  \partial_t \sol = -\dv\big( \sol(1-\sol)\diff\Delta\sol \big) + \theta\Delta\sol.
\end{align}
See \cite{CahnHill,CahnTayl} for a derivation of \eqref{eq.cahn2} and\eqref{eq.cahn},
and \cite{ElliGark} for a related existence analysis.

\subsection{The existence result for more general mobilities}
The assumption \eqref{MobL} about mobility can be weakened to condition \eqref{Mob4} below,
at the price that the weak solution to \eqref{eq.formalflow}--\eqref{eq.ic}
is no longer obtained as a curve of steepest descent in $\wass$,
but appears as the weak limit of such curves in metrics satisfying \eqref{MobL}.
\begin{theorem}
  \label{thm.main2}
  Assume that $\Omega$ is a smooth \EEE bounded convex open subset of $\R^d$,
  the mobility function  $\mob$ satisfies \eqref{Mob1}, 
  and the free energy $G$ satisfies \eqref{Pre1}.
  In addition, assume that $\mob$ satisfies
  \begin{equation}\label{Mob4}
    \tag{{M$_{1/2}$}}
    \lim_{s\downarrow0}s^{1/2}\Mob'(s)=0, \quad\text{and, if $M<\infty$, also} \quad
    \lim_{s\uparrow M}(M-s)^{1/2}\Mob'(s)=0.
  \end{equation}
  Then, for any initial condition $\sol_0\in \admdens$ of finite energy $\altenergy[\sol_0]<\infty$,
  there exists a function $\sol\in L^2_\loc(0,\infty;H^2(\Omega))\cap C^0_w([0,\infty);H^1(\Omega))$
  satisfying the constant mass and maximum estimate \eqref{eq.maxest}, the energy bound \eqref{eq.energydecay} \EEE
  and the weak formulation \eqref{eq.weak}  of equation \eqref{eq.formalflow} with the boundary conditions \eqref{eq.bc}.
  The initial condition \eqref{eq.ic} is met in the sense
  that $\sol(t)$ weakly converges to $\sol_0$ in $H^1(\Omega)$ as $t\downarrow0$.
\end{theorem}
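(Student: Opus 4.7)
The natural strategy for Theorem \ref{thm.main2} is to approximate $\mob$ by a family $\mob_\delta$ of mobilities satisfying both \eqref{Mob1} and \eqref{MobL}, solve the corresponding gradient flow problem via Theorem \ref{thm.main}, and pass to the limit $\delta\downarrow0$. The approximation $\mob_\delta$ can be defined by keeping $\mob_\delta=\mob$ on $[\delta,M-\delta]$ (respectively $[\delta,1/\delta]$ when $M=+\infty$) and smoothly completing it on the complementary intervals by concave pieces whose slope is uniformly bounded and for which $-\mob_\delta''\mob_\delta$ is controlled. Assumption \eqref{Mob4} ensures that the endpoint modifications can be chosen so that $s^{1/2}\mob_\delta'(s)$ and, if $M<\infty$, $(M-s)^{1/2}\mob_\delta'(s)$ remain uniformly small as $s$ and $\delta$ tend to $0$. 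Each $\mob_\delta$ is then admissible for Theorem \ref{thm.main}, and $\mob_\delta\to\mob$ locally uniformly on $(0,M)$.

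For each $\delta$, let $\sol_\delta$ be the solution provided by Theorem \ref{thm.main}. The range and mass constraints \eqref{eq.maxest} together with the energy bound \eqref{eq.energydecay}, combined with \eqref{Pre1cons} and, if $M=+\infty$, the Sobolev embedding of $H^1$, give a $\delta$-uniform bound on $\sol_\delta$ in $L^\infty([0,\infty);H^1(\Omega))$. The proof then requires extracting, from the arguments behind Theorem \ref{thm.main}, a uniform-in-$\delta$ bound in $L^2_\loc([0,\infty);H^2(\Omega))$ and an equicontinuity estimate in a suitable weak norm. These should be recoverable because the higher integrability and time regularity of $\sol_\delta$ rest upon the dissipation inequality, elliptic regularity for $-\Delta\sol_\delta+G'(\sol_\delta)=w_\delta$, and the pointwise range constraint, and depend on $\mob_\delta$ only through quantities (notably the energy ceiling \eqref{eq.energydecay} and the constant in \eqref{Pre1}) which are stable as $\delta\downarrow0$.

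With these estimates in hand, Aubin--Lions delivers a subsequence along which $\sol_\delta\to\sol$ strongly in $L^2_\loc([0,\infty);H^1(\Omega))$, a.e.\ on $[0,\infty)\times\Omega$, weakly in $L^2_\loc([0,\infty);H^2(\Omega))$, and in $C^0_w([0,\infty);H^1(\Omega))$. The range and mass constraints and the energy bound transfer to $\sol$ by lower semicontinuity. To identify $\sol$ as a weak solution, expand the critical factor in \eqref{eq.weak} as
\begin{equation*}
  \dv\bigl(\mob_\delta(\sol_\delta)\diff\zeta\bigr)=\mob_\delta(\sol_\delta)\Delta\zeta+\diff\bigl[\mob_\delta(\sol_\delta)\bigr]\cdot\diff\zeta.
\end{equation*}
Local uniform convergence $\mob_\delta\to\mob$ and strong $L^2$ convergence of $\sol_\delta$ yield $\mob_\delta(\sol_\delta)\to\mob(\sol)$ strongly in $L^2_\loc$, which paired with the weakly convergent $\Delta\sol_\delta$ handles the first summand. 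The pressure contribution $\int P_\delta(\sol_\delta)\Delta\zeta\,\d x\,\d t$ passes to the limit by dominated convergence using the a.e.\ convergence of $\sol_\delta$, the continuity of $P$, and the growth condition in \eqref{Pre1}.

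The main obstacle is the second summand, involving $\diff[\mob_\delta(\sol_\delta)]$: since $\mob_\delta'$ may blow up near $0$ and $M$, strong convergence of this term in $L^2_\loc$ is delicate on the sets where $\sol_\delta$ approaches the endpoints of its range. This is precisely where hypothesis \eqref{Mob4} is indispensable: it provides a $\delta$-uniform modulus of continuity for $\mob_\delta$ near the endpoints that is compatible with the $H^1$ regularity of $\sol_\delta$, so that via a chain-rule argument the strong $H^1$ convergence of $\sol_\delta$ upgrades to strong $L^2_\loc$ convergence of $\diff[\mob_\delta(\sol_\delta)]$ toward $\diff[\mob(\sol)]$. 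Coupled with the weak convergence of $\Delta\sol_\delta$, this closes the passage to the limit in the weak formulation and yields the claimed solution $\sol$.
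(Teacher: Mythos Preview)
Your overall strategy matches the paper's: approximate $\mob$ by {\Lipschitz} mobilities $\mob_\delta$, apply Theorem~\ref{thm.main}, and pass to the limit. The approximation you propose (truncate and complete) differs from the paper's affine rescaling $\mob_\delta(s)=\mob(s+s_\delta)-\delta$ (respectively $\mob_\delta(s)=\mob\big(\frac{s^2_\delta-s^1_\delta}{M}s+s^1_\delta\big)-\delta$), but either works; the paper's choice has the advantage that $\mob_\delta\le\mob$ pointwise, which immediately gives $\wass\le\wassdelta$ and hence $\delta$-uniform H\"older equicontinuity in the \emph{fixed} metric $\wass$. This is what replaces your somewhat underspecified Aubin--Lions step: the paper uses the metric Arzel\`a--Ascoli theorem rather than a time-derivative bound.

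The genuine gap is in your treatment of $\diff\mob_\delta(\sol_\delta)\to\diff\mob(\sol)$. You write that \eqref{Mob4} gives a modulus of continuity ``compatible with the $H^1$ regularity of $\sol_\delta$'' so that strong $H^1$ convergence plus a chain rule suffices. This does not work: $\mob_\delta'(\sol_\delta)$ is unbounded where $\sol_\delta$ approaches $0$ or $M$, and strong $H^1$ convergence of $\sol_\delta$ gives no control on $\mob_\delta'(\sol_\delta)\diff\sol_\delta$ there. The paper's mechanism is essentially different and uses the $H^2$ bound, not $H^1$. One writes (for $M=+\infty$)
\[
\diff\mob_\delta(\sol_\delta)=2\,g_\delta(\sol_\delta)\,\diff\sqrt{\sol_\delta},\qquad g_\delta(s):=\sqrt{s}\,\mob_\delta'(s),
\]
and similarly with an extra $\sqrt{M-s}$ factor when $M<\infty$. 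Assumption \eqref{Mob4} makes $g_\delta\to g_0$ uniformly with $g_0(0)=0$, while the Lions--Villani estimate \eqref{eq.villani0}, $\|\diff\sqrt{\sol_\delta}\|_{L^4}^2\le C\|\sol_\delta\|_{H^2}$, converts the $\delta$-uniform $L^2_\loc(0,\infty;H^2)$ bound into a $\delta$-uniform $L^4$ bound on $\diff\sqrt{\sol_\delta}$. The product then converges strongly in $L^2$ by Vitali's theorem, and one checks separately that $\diff\mob(\sol)=0$ a.e.\ on $\{\sol=0\}\cup\{\sol=M\}$. Without this square-root factorization and the $L^4$ estimate for $\diff\sqrt{\sol_\delta}$, your chain-rule step does not close.
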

The first condition in \eqref{Mob4} is needed to give a meaning to the gradient of $\mob(\sol)$ in the weak formulation \eqref{eq.weak},
particularly on the set $Z=\{(t,x)\in(0,+\infty)\times\Omega:\sol(t,x)=0\}$. 
We briefly indicate the problem:
Since $\sol(t)\in H^2(\Omega)$ for a.e.\ $t\geq0$,
and $\sol(t)$ satisfies homogeneous Neumann boundary conditions,
the Lions-Villani-estimate on square roots \cite{LionVill} (see Lemma \ref{lem.Sobolev} in the Appendix) 
states that
\begin{align}
  \label{eq.villani0}
  \big\|\diff\sqrt{\sol(t)}\big\|_{L^4(\Omega)}^2 \leq C \|\diff^2\sol(t)\|_{L^2(\Omega)} .
\end{align}
Thus $\sol\in L^2_\loc(0,\infty;H^2(\Omega))$ in combination with \eqref{Mob4} implies that $\Mob(\sol)\in L^2_{\loc}(0,\infty;H^1(\Omega))$.
In fact, in the proof it turns out that $\diff\Mob(\sol)=0$ a.e.\ on the set $Z$.
A similar reasoning applies to the zero set of $\diff\sqrt{M-\sol}$ if $M<\infty$.
Therefore, it suffices to evaluate the second spatial integral in \eqref{eq.weak}
only on the subset $\{0<\sol(t)<M\}\subset\Omega$,
thus avoiding to discuss the singularity of $\Mob'(s)$ at $s=0$ or $s=M$.
Unfortunately, \eqref{eq.villani0} cannot be extended to obtain $L^{2p}$ estimates on roots $\sol(t)^{1/p}$ with $p>2$, as is easily seen.
Without further a priori estimates, there is apparently no way to remove condition \eqref{Mob4}.

The main example in the framework of Theorem \ref{thm.main2}
is the -- (de)stabilized -- lubrication or thin film equation,
where one chooses $M=+\infty$ and $\Mob(r)=r^\alpha$ with $1/2<\alpha \leq 1$.
The equation is
\begin{align}
  \label{eq.lubrication}
  \partial_t \sol = -\dv\big( \sol^\alpha \diff\Delta\sol \big) + \kappa \Delta \big( \sol^\beta \big),
\end{align}
where $\sol:\Omega\times(0,\infty)\to [0,+\infty)$
describes the height of a thin viscous liquid film on a substrate,
moving under the influence of surface tension;
the lower order perturbation is typically attributed to van der Waals forces
or similar intermolecular interactions.
The destabilized case corresponds to $\kappa<0$
while for $\kappa>0$, the contribution has a stabilizing effect.

The existence theory of \eqref{eq.lubrication} for the unperturbed flow $\kappa=0$
is fairly well understood \cite{DPasGarkGrun}.
In particular, the Hele-Shaw equation obtained for $\alpha=1$ has been analyzed thoroughly
as a gradient flow of the Dirichlet functional in the $L^2$-Wasserstein metric, see e.g. \cite{CarlUlus,GiacOtto,MattMCanSava}.
The perturbed flow has essentially been treated in $d=1$ dimensions only,
see e.g. \cite{LaugPugh,SlepPugh},
but some results (e.g. on the blow-up behavior of solutions) are available
also in multiple dimensions \cite{EvenGalaKing}.
For non-linear mobilities $\mob$, the equation's gradient flow structure has apparently not been exploited
for rigorous analytical treatment before.

In order to obtain \eqref{eq.lubrication} from \eqref{eq.formalflow}, one would like to choose
\begin{align*}
  G(r) =\kappa  \frac{\beta}{(\beta-\alpha)(\beta-\alpha+1)}\ r^{\beta-\alpha+1}
\end{align*}
in the definition of the energy \eqref{eq.e1}.
This is, however, only possible for certain regimes of $\beta$ and $\kappa$:
\begin{itemize}
\item If $1\leq\beta\leq\alpha+1$, then
  \GGG $G$ 
  satisfies
  \eqref{Pre1}
  \EEE
  for all $\kappa\in\setR$.
\item \GGG If $\alpha<1$ and
  \EEE $\alpha+1<\beta$ \GGG
  or $\alpha=1$ and $\beta>2$ with $\beta<2d/(d-4)$ if $d>4$, 
  then 
  \emph{provided} that $\kappa\geq0$, i.e., the perturbation must be stabilizing.
\item If $\beta<1$, then there is no way to accommodate the perturbation into our framework.
\end{itemize}

\subsection{Key ideas of the existence proof}
The discrete approximation scheme in \eqref{eq.mm} provides a family of piecewise constant approximate solutions $\bar\sol_\tau:\hopen\to H^1(\Omega)$.
Weak convergence towards a limit curve $\bar u:\hopen\to H^1(\Omega)$ along a sequence $\tau_n\downarrow0$ is easily obtained,
using the machinery developed in \cite{AmbrGiglSava}.
The difficulty lies in identifying the weak limit $\bar u$ as a weak solution to \eqref{eq.formalflow}--\eqref{eq.ic}.

For mobilities satisfying \eqref{MobL},
a semi-discrete version of the weak formulation \eqref{eq.weak} is derived by variational methods,
i.e., we use suitable perturbations of the minimizers $\sol_\tau^n$ in each step of the scheme \eqref{eq.mm}.
Our variations of the $\sol_\tau^n$ are obtained by applying an auxiliary gradient flow to them.
Specifically, in order to arrive at \eqref{eq.weak},
we would like to use variations in the direction of the flow generated by the functional
\begin{align*}
  \altpotential[\sol] := \int_\Omega V\sol\,\d x
\end{align*}
with a given test function $V\in C^\infty(\overline\Omega)$ satisfying homogeneous Neumann boundary conditions on $\partial\Omega$. \EEE
To motivate this particular choice, assume for the moment that the test function $\zeta$ factors as $\zeta(t,x)=\psi(t)V(x)$.
Then the left-hand side of \eqref{eq.weak} can formally be read as
\begin{align*}
  \int_0^T\int_\Omega \sol(t,x)\psi'(t)V(x)\d x\,\d t
  = -\int_0^T \psi(t)\frac\d{\d t}\altpotential[\sol(t)]\d t,
\end{align*}
i.e., as the temporal derivative of the functional $\altpotential$ along the sought gradient flow for $\altenergy$.
Further, the ``flow interchange'' Lemma \ref{lem.flowinterchange}, taken from \cite{MattMCanSava}, says that this expression can equally be understood
as the temporal derivative of the functional $\altenergy$ along the gradient flow of $\altpotential$.
Thus, variations of the minimizers for \eqref{eq.mm} along the flow of $\altpotential$ are expected to provide a form of \eqref{eq.weak}.

Unfortunately, $\altpotential$ itself is not a suitable choice for carrying out estimates,
since the gradient flow generated by $\altpotential$ is not regular enough to apply the flow interchange lemma.
In particular, the functional $\altpotential$ is not geodesically $\lambda$-convex
in the metric $\wass$ for any $\lambda \in\R$ (see \cite{CarrLisiSavaSlep}). \EEE
As a matter of fact, the trajectories of the gradient flow of $\altpotential$ with respect to $\wass$
are formally given by the solutions of the non-linear conservation law
\begin{align}
  \label{eq.claw1}
  \partial_{\gGG t} \aux_{\gGG t} = \dv\big( \mob(\aux_{\gGG t})\diff V \big).
\end{align}
These solutions are expected to develop shocks in finite time.
To circumvent this technical problem, we consider a modification of $\altpotential$,
$$ \altpotential_\eps [\sol] := \altpotential [\sol] + \eps\altentropy [\sol], $$
where $\altentropy$ is defined in \eqref{eq.altentropy},
that induces the following viscous regularization in \eqref{eq.claw1}
\begin{align*}
  \partial_{\gGG t} \aux_{\gGG t} = \dv\big( \mob(\aux_{\gGG t})\diff V \big) +\eps \Delta \aux_{\gGG t}.
\end{align*}
\EEE

For {\Lipschitz} mobilities, the viscous regularized flow generated by $\altpotential_\eps$
with respect to $\wass$ is $\lambda_\eps$-convex
and our strategy goes through.
For more general mobilities, even the viscous flow lacks convexity.
This makes it necessary to perform further approximations:
we replace the mobility function by {\Lipschitz}-ones,
obtain a weak formulation \eqref{eq.weak} for the corresponding flows,
and then pass to the non-{\Lipschitz} limit.

Even with the discrete version of \eqref{eq.weak} at hand,
we still need to facilitate sufficiently strong compactness to pass to the time-continuous limit $\tau\downarrow0$.
Our key estimate is obtained from the dissipation of the functional $\altentropy$ in \eqref{eq.altentropy}
along solutions of \eqref{eq.formalflow}.
A direct calculation shows that the dissipation term provides a bound in $L^2_\loc(0,\infty;H^2(\Omega))$.
The rigorous proof of this $H^2$-bound is obtained by another application of the strategy above:
we interchange flows and calculate the variations of $\altenergy$ with respect to perturbations of the minimizers
in the direction of the gradient flow generated by $\altentropy$.
This time, the strategy goes through smoothly since the auxiliary functional $\altentropy$,
which generates the heat flow with homogeneous Neumann boundary condition,
is geodesically convex with respect to the considered metric $\wass$. \EEE
\medskip

The paper is organized as follows.
Section \ref{sct.prelims} below provides the essential definitions for the measure-theoretic formulation of the problem.
In Section \ref{sct.apriori} we have collected a variety of technical results
that are applied in subsequent sections to obtain a priori estimates on the semi-discrete approximation $\bar\sol_\tau$.
Sections \ref{sct.Lipschitz} and \ref{sct.general} are devoted to the proofs of Theorems \ref{thm.main} and \ref{thm.main2}, respectively.
There, we follow the strategy outlined above.

\section{Preliminaries}
\label{sct.prelims}

\GGG
\subsection{Basic assumptions}
\label{subsec:basic}
Here and in the rest of this paper, we will always assume that
\begin{equation}
  \label{eq:5}
  \tag{$\Omega$-conv}
  \Omega\quad\text{is a convex, smooth \EEE \GGG and bounded open set of $\Rd$, \quad
    $\mass_\Omega:=\frac\mass{|\Omega|}\in (0,M)$,}
\end{equation}
where $M\in (0,+\infty]$ characterizes the domain of the mobility
function $\mob$ as in \eqref{Mob1}.
We will always assume that $\mob$ is a mobility function satisfying
\eqref{Mob1} and $G$ is a free energy density satisfying \eqref{Pre1}.
\subsection{\EEE
Notation: admissible and regular densities}
\GGG
As in \eqref{eq:9},
for a given mobility $\mob$ and a mass $\mass$ satisfying \eqref{eq:5}
we introduce the sets of \emph{admissible} and \emph{regular
  densities} on $\Omega$
\begin{align}
  \admdens &:= \Big\{ \dens\in L^1(\Omega) \,\Big|\, 0\le\dens\le M,
  \,\int_\Omega\dens \,\d x=\mass \Big\},\\
  \label{eq.regulardensity}
  \regdens &:= \Big\{ \dens\in C^\infty(\Omega)\, \Big|\,
  0<\inf\dens\le \sup\dens<M,\quad\int_\Omega\dens\,\d x=\mass
  \Big\}.
\end{align}
\GGG
Since we will keep fixed the mobility $\mob$ (and its domain of definition $(0,M)$) and
the total mass $\mass$, we will omit to indicate the explicit
dependence of the above spaces from these two parameters.

We often identify an element $u\in \admdens$ with the nonnegative
measure $\mu=u\Leb d$ in $\Rd$ supported in $\overline \Omega$ and we will
consider weak convergence of sequences in $\admdens$ in the sense of distributions of $\mathscr
D'(\Rd)$:
\begin{equation}
  \label{eq:11}
  u_n\weakto u\quad\text{in }\mathscr D'(\Rd)\quad\Leftrightarrow\quad
  \lim_{n\to\infty}\int_\Omega u_n\varphi\,\d x=\int_\Omega
  u\varphi\,\d x\quad\text{for all }\varphi\in C^\infty_{\rm c}(\Rd).
\end{equation}
\begin{remark}
  \label{rem:convergence}
  \upshape
  Since $u_n$ are nonnegative with fixed total mass, we could also
  equivalently consider test functions $\varphi\in C^0_{\rm c}(\Rd)$
  in \eqref{eq:11}; when $M<\infty$, $\admdens$ is a convex and bounded
  subset of $L^\infty(\Omega)$ and \eqref{eq:11} also coincides with
  the weak-$\star$ convergence in the latter space.
\end{remark}
For every extended-valued real functional $\auxil:\admdens\to (-\infty,+\infty]$ we denote
by $\Dom(\auxil)$ its proper domain $\Dom(\auxil):=\{u\in
\admdens:\auxil(u)<\infty\}$. $\auxil$ is called proper if
$\Dom(\auxil)$ is not empty.
\EEE
%
\GGG

We will consider curves in $\regdens$: they are maps
$\gamma:[0,1]\to\admdens$ which we will also identify with functions
$\gamma:[0,1]\times\Omega\to [0,M]$ such that $\gamma(t,\cdot)\in
\regdens$ for every $t\in [0,1]$.
We say that
\begin{equation}
  \label{eq:16}
  \gamma:[0,1]\to\regdens\text{ is regular if}\qquad
  t\mapsto \gamma(t,x) \in C^1([0,1])\quad\foralltext x\in \Omega.
\end{equation}
In a similar way, a functional
\begin{equation}
\auxil:\regdens\to\R\text{ is regular if}\qquad
t\mapsto \auxil[\gamma(t)]\in C^1([0,1])\quad\text{for every regular
  curve }\gamma,
\label{eq:3}
\end{equation}
and a map $\SG:[0,\infty)\times \regdens\to\regdens$ is regular if the curves
\begin{equation}
\SG(\cdot,u),\
\SG(t,\gamma(\cdot)) \text{ are regular for every
$u\in \regdens$, $t\ge0$, and for every regular curve $\gamma$.}
\label{eq:18}
\end{equation}

\EEE

\subsection{Survey: \GGG weighted transport distances\EEE}
We shall now review the \GGG weighted transport distances \EEE
$\wass$ introduced in \cite{DolbNazaSava} (see also \cite{CarrLisiSavaSlep} and \cite{LM}) \EEE
without going into details about their formal definition.
\GGG When $M=\infty$ they could in fact be pseudo-metrics, i.e.\ they satisfy all the
axioms of the usual notion of distance except for the fact that
the value $+\infty$ may be attained; nevertheless, even in the case
$M=\infty$ the next proposition shows that
the restriction of $\wass$ to the sublevels of
the convex functional (recall \eqref{eq.altentropy})
\begin{equation}
  \label{eq:2}
  \altentropy [u]:=
    \int_\Omega U(u)\,\d x
  \qquad
  \text{where}\quad
  U''(s)=\frac1{\mob(s)},\quad
  U(\mass_\Omega)=U'(\mass_\Omega)=0,
\end{equation}
is a finite distance.
Notice moreover that, besides $\mob$, $\wass$ also depends on the
domain $\Omega$: we will denote it by $\wassset\Omega$ when we want to
stress this dependence. In particular, \EEE
for every $\delta>0$ we will also sometimes consider the $\delta$-neighborhood $\neigh\delta$ of $\Omega$
\begin{equation}\label{eq:14}
  \neigh \delta := \Omega+\delta\mathbb{B}^d = \big\{ x\in\setR^d \big| \,\operatorname{dist}(x,\Omega)<\delta \big\}.
\end{equation}
\begin{proposition}\label{prp.wm}
  \GGG In the setting of \S \ref{subsec:basic}, \EEE
  the pseudo-metric $\wass$
  on the space $\mmspace$ has the following properties:
  \begin{enumerate}[(a)]
  \item
    \GGG For every $u_n,u\in \admdens$,
    \begin{equation}
      \label{eq:12}
      \lim_{n\to\infty}\wass(u_n,u)=0\quad\Leftrightarrow\quad
      u_n\weakto u\quad\text{in }\mathscr D'(\Rd)
    \end{equation}
    according to \eqref{eq:11} (but see also remark
    \ref{rem:convergence}). \EEE
  \item For every $c\ge0$ the \GGG sublevels of $\altentropy$
    \begin{equation}
      \label{eq:4}
      \Big\{ \mu\in\mmspace\,\big|\, \altentropy[\mu]\le c 
      \Big\}\quad\text{are compact metric spaces w.r.t.\ }\wass.
    \end{equation}
    \EEE
  \item
    %
    For every decreasing sequence of convex sets $\Omega^n$ converging to $\Omega$,
    \EEE
    if two sequences $\mu_0^n,\,\mu_1^n\in
    \GGG \admdensx{\Omega^{n}}
    \EEE $
    converge to $\mu_0$ and $\mu_1$ in the sense of distributions
    respectively, i.e.
    \begin{equation}
      \label{eq:13}
      \lim_{n\to\infty}\int_{\Omega^{n}} u_i^n\varphi\,\d x=\int_\Omega
      u_i\varphi\,\d x\quad\foralltext\varphi\in C^\infty_{\rm
        c}(\Rd),\quad i=0,1,
    \end{equation}
    then
    \begin{equation}\label{lsc}
      \wassset\Omega(\mu_0,\mu_1) \le \liminf_{n\to +\infty}\wassset{\Omega^{n}}(\mu_0^n,\mu_1^n).
    \end{equation}
  \item If $\curv:[0,1]\to\regdens$ is a regular curve according to
    \eqref{eq:16}
    and $\pot:[0,1]\to H^1(\Omega)$ is the corresponding curve of weak solutions to
    \begin{align}
      \label{eq.Neumann}
      -\dv\big(\mob(\curv)\diff\pot\big) = \partial_s\curv \quad\text{in $\Omega$},
      \qquad
      \nml\cdot\diff\pot = 0 \quad\text{on $\partial\Omega$},
    \end{align}
    then the $\wass$-distance between $\mu_0=\curv(0)$ and
    $\mu_1=\curv(1)$ is bounded as follows:
    \begin{align}
      \label{eq.wmabove}
      \wass(\mu_0,\mu_1)^2 \le \int_0^1 \int_\Omega \mob(\curv) |\diff\pot|^2\d x\,\d s.
    \end{align}
  \item \label{bull.approxgeodesic}
    \GGG
    Let $\mu_i
    \in \admdens$, $i=0,1$, be given with 
    $\wass(\mu_0,\mu_1)<\infty$. \EEE
    For every decreasing sequence of smooth convex sets $\Omega_n$ converging to $\Omega$ as $n\to \infty$,
    such that $\Omega_n \supset \neigh{\delta_n}$ for a vanishing sequence $\delta_n$,
    there exists
    a sequence of regular curves (``approximate geodesics'')
    $\curv_n:[0,1]\to \regdensx{\Omega_n}$,
    such that:
    \begin{itemize}
    \item
      $\curv_n(0)$ and $\curv_n(1)$ converge to
      $\mu_0$ and $\mu_1$, respectively, in $L^1(\Rd)$ as
      $n\to\infty$ and for every proper and lower
      semicontinuous convex integrand
      $F:[0,M]\to [0,\infty]$
     \begin{equation}
        \label{eq:15}
        \lim_{n\to\infty}\int_{\Omega_n} F(\gamma_n(i,x))\,\d
        x=\int_\Omega F(\mu_i)\,\d x\qquad i=0,1.
      \end{equation}
   \item if $\pot_n:[0,1]\to H^1(\Omega_n)$ are the corresponding curves of weak solutions to \eqref{eq.Neumann} on $\Omega_n$,
      then
      \begin{align}
        \label{eq.wmbelow}
        \wass(\mu_0,\mu_1)^2 = \lim_{n\to\infty} \int_0^1
        \int_{\Omega_n} \mob(\curv_n)|\diff\pot_n|^2\d x\, \d s .
      \end{align}
    \end{itemize}
    \EEE
  \end{enumerate}
\end{proposition}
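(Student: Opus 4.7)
The plan is to treat (a), (c), (d) as essentially direct consequences of the dynamical Benamou--Brenier formulation of $\wass$ established in \cite{DolbNazaSava} and refined in \cite{CarrLisiSavaSlep,LM}, while concentrating the main effort on the compactness claim (b) and the approximate-geodesic construction (e), which are the new ingredients specifically needed here.

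For (a), I would first use the bound $|\int_\Omega (u_n - u)\varphi\,\d x|\le C_\varphi \wass(u_n,u)$, obtained by testing smooth $\varphi$ along any path that almost realizes the infimum in the action definition of $\wass$; this gives the implication from $\wass$-convergence to weak convergence in $\mathscr D'(\Rd)$. For the converse I would build an explicit path between $u_n$ and $u$ via convolution with a mollifier of vanishing scale and check that the resulting action tends to $0$. Part (d) is then immediate: the pair $(\curv,\pot)$ is admissible in the action infimum defining $\wass$, so the inequality is just the infimum bounded by the specific action. For (c), I would pick, for each $n$, an almost-optimal curve--potential pair on $\Omega^{n}$, extract a weakly convergent subsequence using the equiboundedness of the action, and pass to the liminf invoking the lower semicontinuity of the action functional under the concavity hypothesis on $\mob$ --- an argument already carried out in \cite{CarrLisiSavaSlep}.

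For (b) the cases $M<+\infty$ and $M=+\infty$ require different treatments. When $M<+\infty$, $\admdens$ is a bounded, weakly-$\star$ sequentially compact subset of $L^\infty(\Omega)$, whose weak-$\star$ topology is metrizable on fixed-mass densities, so (a) promotes the weak-$\star$ limit to a $\wass$-limit. When $M=+\infty$, the concavity of $\mob$ together with $\mob(0)=0$ forces $\mob(s)/s$ to be nonincreasing, hence $\mob(s)\le Cs$ for large $s$; consequently $U''(s)=1/\mob(s)\ge c/(1+s)$, and $U$ is superlinear at infinity. The de la Vall\'ee Poussin criterion then gives uniform integrability of any sequence with bounded $\altentropy$, Dunford--Pettis extracts a weakly $L^1$-convergent subsequence, weak $L^1$-lower semicontinuity of $\altentropy$ (valid by convexity of $U$) ensures that the sublevel is closed, and (a) once more upgrades weak convergence to $\wass$-convergence.

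The main obstacle lies in (e), where three properties must be achieved simultaneously: $L^1$-convergence of the endpoints, convergence of $\int F(\curv_n(i,\cdot))$ for \emph{every} proper lower semicontinuous convex integrand $F$, and convergence of the action on $\Omega_n$ to $\wassset\Omega(\mu_0,\mu_1)^2$. My plan is a two-stage construction. First, extend $\mu_i$ by zero to $\Rd$, convolve with a mollifier of scale $\eta_n\downarrow 0$ chosen fine enough that the convolution is supported inside $\Omega_n$, and add a vanishing positive constant before renormalizing the total mass to $\mass$; by Jensen's inequality the integrals of $F$ against the regularized densities will converge to $\int_\Omega F(\mu_i)\,\d x$, yielding \eqref{eq:15}. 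Second, on $\Omega_n$, between these two smooth strictly-positive endpoints I would invoke the existence of smooth near-geodesic curve--potential pairs supplied by \cite{DolbNazaSava,CarrLisiSavaSlep}, matching the associated action to $\wassset{\Omega_n}(\curv_n(0),\curv_n(1))^2$ up to an error $1/n$. The lower semicontinuity established in (c) then forces the liminf of these actions to dominate $\wassset\Omega(\mu_0,\mu_1)^2$, while the upper bound comes from the near-geodesic property. The delicate point will be a careful diagonal choice of $\eta_n$ compatible with both the smooth-geodesic construction on $\Omega_n$ and the $L^1(\Rd)$-convergence of the mollified endpoints to $\mu_i$.
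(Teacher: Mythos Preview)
The paper does not prove this proposition: it is presented explicitly as a survey of properties of $\wass$ taken from \cite{DolbNazaSava,CarrLisiSavaSlep,LM}, and no argument is given in the text. So there is nothing to compare against, and your proposal should be read as an attempt to reconstruct proofs from those references.

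Your sketches for (c) and (d) are correct and match the standard arguments. Your treatment of (b) is also reasonable; the superlinearity of $U$ via $\mob(s)\le Cs$ and de la Vall\'ee Poussin is the right mechanism when $M=+\infty$.

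There is, however, a genuine gap in part (e). Your two-stage plan mollifies only the \emph{endpoints} $\mu_0,\mu_1$, then takes near-geodesics on $\Omega_n$ between the regularized endpoints, and claims that ``the upper bound comes from the near-geodesic property''. But the near-geodesic property on $\Omega_n$ only tells you that the action of $\curv_n$ is close to $\wassset{\Omega_n}(\curv_n(0),\curv_n(1))^2$; it says nothing about how the latter compares to $\wassset{\Omega}(\mu_0,\mu_1)^2$. To obtain $\limsup_n \wassset{\Omega_n}(\curv_n(0),\curv_n(1))^2 \le \wassset{\Omega}(\mu_0,\mu_1)^2$ you must produce, for each $n$, a competitor curve on $\Omega_n$ whose action is close to $\wassset{\Omega}(\mu_0,\mu_1)^2$. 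The way this is done in \cite{DolbNazaSava,CarrLisiSavaSlep} is to take a near-optimal pair $(\rho,w)$ on $\Omega$ (in flux variables, with $w=\mob(\rho)\diff\pot$), extend by zero, and mollify the \emph{whole curve} $(\rho,w)$; the key point is that the action density $(\rho,w)\mapsto |w|^2/\mob(\rho)$ is jointly convex because $\mob$ is concave, so mollification does not increase the action. This yields a regular curve on $\Omega_n$ with endpoints $\mu_i*\eta_n$ (plus small adjustments) and action bounded by the original plus $o(1)$. Without this step your upper bound is missing, and the diagonal argument you allude to cannot close the gap. A related issue is that geodesics on $\Omega_n$ between smooth endpoints need not themselves be regular curves in the sense of \eqref{eq:16}; again, mollifying the full curve is what produces regularity.

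A smaller point: your sketch of the implication ``$u_n\weakto u \Rightarrow \wass(u_n,u)\to 0$'' in (a) via ``an explicit path by convolution'' is too vague as written. A workable version is to use the heat semigroup (or convolution) to regularize both $u_n$ and $u$ to a common small scale, bound $\wass(v,\SG^t v)$ uniformly for $v\in\admdens$ by an explicit action estimate, and use that the regularized sequences converge in a strong enough topology to control the middle term in the triangle inequality.
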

No properties of the metric $\wass$ other than those listed above will be used in the sequel.
Notice that \eqref{eq.wmabove}\&\eqref{eq.wmbelow} establish the connection between the metric $\wass$
and the formal definition of the metric tensor given in \eqref{eq.formalmetric}.

\subsection{The entropy and energy functionals}
\label{subsec:UGproperties}
In this section, we derive some relevant properties of the entropy and the energy
densities $U,G$ introduced in \eqref{eq.altentropy} and \eqref{eq.e1}.
For definiteness, we make the following specific choice for the function $U$ in \eqref{eq.altentropy}:
\begin{align}
  \label{eq.entropy}
  U(s):=\int_{s_0}^s \frac{s-r}{\mob(r)}\d r ,\quad
  \massOmega:=\mass_\Omega=\frac\mass{|\Omega|}.
\end{align}
\EEE
\begin{lemma}
  \label{lem.entropy}
  The entropy functional $\entropy$ is lower semi-continuous
  with respect to the weak convergence \eqref{eq:11},
  and satisfies the following bounds
  \begin{align}
    \label{eq.entropyest1}
    0 \leq \entropy[\mu] \leq
    C(1 + \|\sol\|_{L^2(\Omega)}^2), \qquad \foralltext \,\mu\in \Dom(\entropy).
  \end{align}
  \EEE
  The constant $C$ above only depends on $\Omega$, $\mass$, and
  $\MobOmega=\Mob(\mass_\Omega)$. 
\end{lemma}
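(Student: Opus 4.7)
The non-negativity $\entropy\ge 0$ is immediate from the definition of $U$ in \eqref{eq.entropy}: since $U''=1/\mob>0$ on $(0,M)$, the strict convexity of $U$ together with the normalization $U(s_0)=U'(s_0)=0$ makes $s_0=\mass/|\Omega|$ the unique global minimum of $U$, at which $U$ vanishes, so $U\ge 0$ pointwise on $(0,M)$.

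For the upper bound I would exploit the concavity of $\mob$ to produce quadratic bounds on $U$. On $(0,s_0]$, concavity together with $\mob(0)=0$ gives the chord estimate $\mob(r)\ge(r/s_0)\MobOmega$, hence $(r-s)/\mob(r)\le s_0/\MobOmega$ for $0<s\le r\le s_0$, and integration yields $U(s)\le s_0^2/\MobOmega$ for all $s\in(0,s_0]$. For $s\in[s_0,M)$ two cases occur. If $M<\infty$, the symmetric chord estimate $\mob(r)\ge\MobOmega(M-r)/(M-s_0)$ on $[s_0,M)$, combined with $s\le M$, bounds $U(s)$ by $(M-s_0)^2/\MobOmega$. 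If $M=\infty$, a concave strictly positive function vanishing at $0$ must be non-decreasing, so $\mob(r)\ge\MobOmega$ for all $r\ge s_0$, and therefore $U(s)\le(s-s_0)^2/(2\MobOmega)$. Combining both regimes produces $U(s)\le C(1+s^2)$ for a constant $C$ depending only on $\Omega$, $\mass$ and $\MobOmega$, and \eqref{eq.entropyest1} then follows by integration over $\Omega$.

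For the lower semicontinuity I would use the convexity of $U$. Extending $U$ by $+\infty$ outside $[0,M]$ produces a convex, proper, lower semicontinuous function on $\R$, so by standard convex duality it admits a representation as a countable supremum of affine minorants, $U(s)=\sup_{n\in\N}(a_n s-b_n)$, whence
\begin{equation*}
  \entropy[u]=\sup_{N\in\N}\int_\Omega\max_{1\le n\le N}\bigl(a_n u(x)-b_n\bigr)\,\d x.
\end{equation*}
When $M<\infty$, the densities in $\admdens$ are uniformly bounded in $L^\infty$, so the distributional convergence in \eqref{eq:11} coincides with weak-$*$ convergence in $L^\infty(\Omega)$ and each functional on the right-hand side is sequentially continuous, yielding the claim. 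When $M=\infty$, the main subtlety is to check superlinear growth of $U$ at infinity: from the concavity bound $\mob(r)\le\MobOmega+\mob'(s_0^+)(r-s_0)$ one gets $U(s)\gtrsim s\ln s$ for large $s$, so $U(s)/s\to\infty$. By de la Vallée Poussin any sequence in $\admdens$ with uniformly bounded entropy is then equi-integrable, distributional convergence upgrades to weak $L^1$-convergence, and Ioffe's classical semicontinuity theorem closes the argument. The only mildly delicate step of the whole proof is this upgrade in the unbounded case $M=\infty$; the rest reduces to direct manipulations of the explicit formula for $U$.
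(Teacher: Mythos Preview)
Your argument is correct and follows essentially the same route as the paper: the non-negativity via convexity of $U$ with $U(s_0)=U'(s_0)=0$, and the upper bound via the chord estimates on $\mob$ (your three cases and resulting bounds $s_0^2/\MobOmega$, $(M-s_0)^2/\MobOmega$, $(s-s_0)^2/(2\MobOmega)$ coincide exactly with the paper's). The only difference is in presentation of the lower semicontinuity: the paper dispatches it in one sentence (``a consequence of the convexity of $U$''), whereas you spell out the underlying mechanism via affine minorants and, in the case $M=\infty$, the superlinear growth of $U$ ensuring equi-integrability so that distributional convergence upgrades to weak $L^1$ convergence---this is precisely the content behind the paper's one-line appeal, not a different idea.
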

\begin{proof}
  Lower semi-continuity is a consequence of the convexity of $U$,
  which, in turn, follows from $U''(s)=1/\mob(s)>0$ for every $s\in (0,M)$.
  The lower bound in \eqref{eq.entropyest1} follows from non-negativity of $U$,
  indeed convexity of $U$ and \eqref{eq:2} yield that $\mass_\Omega$ is a minimum for $U$ and $U(\mass_\Omega)=0$.
  \EEE
  For showing the upper bound in \eqref{eq.entropyest1}, first note that
  \begin{align}
    \label{eq.mbelow}
    \mob(s) \geq \begin{cases}
      \frac{\MobOmega}{\massOmega}s & \mbox{if }s\le s_0\\
      \frac{\MobOmega}{M-\massOmega}(M-s) & \mbox{if }s > s_0, \quad M<+\infty \\
      \MobOmega& \mbox{if }s > s_0, \quad M=+\infty ,
    \end{cases}
  \end{align}
  by concavity of $\mob$.
  Thus for $C_0:=\frac {\massOmega^2}\MobOmega$ and $C_M:=\frac{(M-\massOmega)^2}\MobOmega$
  \begin{align*}
    U(s) \leq \left\{\begin{array}{ll}
        \frac{\massOmega}{\MobOmega}(s(\log
        s-\log(\massOmega)-1)+\massOmega)\leq
        C_0
        & \mbox{if } 0\leq s\leq \massOmega, \\
        \frac{M-\massOmega}{\MobOmega}((M-s)(\log(M-s)-\log(M-\massOmega)-1)+M-\massOmega)
        \leq
        C_M
        & \mbox{if } s>\massOmega, M<\infty, \\
        \frac1{2\MobOmega}(s-\massOmega)^2 \leq \frac1{2\MobOmega}s^2 & \mbox{if } s>s_0, M=\infty.
      \end{array}\right.
  \end{align*}
  Now \eqref{eq.entropyest1} follows by the boundedness of $\Omega$.
\end{proof}
\GGG
Concerning the function $G$,
we
decompose its second derivative $G''$ into the difference of its positive
and negative part
\begin{equation}
  \label{eq:25}
  L:=G''=L^+-L^-,\qquad
  L^-:=-\min(G'',0),\quad L^+:=\max(G'',0).
\end{equation}
Fixing
$s_0\in (0,M)$ (e.g.\ $s_0=\mass_\Omega$ as before) and assuming without loss of
generality
that $G(s_0)=G'(s_0)=0$ (recall that the integral of elements in
$\admdens$ is fixed to be $\mass$) we have the decomposition
\begin{equation}
  \label{eq:21}
  G=G_{\conv}+G_\conc,\qquad
  G_\conv(s)=\int_{s_0}^s L^+(r)(s-r)\,\d r,\quad
  G_\conc(s)=-\int_{s_0}^s L^-(r)(s-r)\,\d r,
\end{equation}
and the corresponding one
\begin{equation}
  \label{eq:23}
  P=P_{\incr}+P_{\decr},\quad
  P_\incr(s)=\int_{s_0}^s {L^+(r)}{\Mob(r)}\,\d r,\quad
  P_\decr(s)=-\int_{s_0}^s {L^-(r)}{\Mob (r)}\,\d r.
\end{equation}
\eqref{eq:21} and the upper bound $L^-\le C(1+1/\Mob)=C(1+U'')$ of \eqref{Pre1}
yield
\begin{equation}
  \label{eq:24}
  G_\conv(s)\ge G_\conv(s_0)=0=G_\conc(s_0)\ge G_\conc(s)\ge
  -C(1+s^2)\quad\foralltext s\in (0,M),
\end{equation}
proving the lower bound in \eqref{Pre1cons}.
It follows immediately from the lower bounds in \eqref{Pre1} that
$P_\decr$ is Lipschitz continuous and $G_\conc$ is continuous in
$[0,M)$ (and also in $M$ if $M<\infty$) since it is concave and
bounded from below.

In order to check the continuity of $G_\conv$ in $0$ (the same
argument applies to $M$ when $M<\infty$), let us first observe that
$P_\incr'=L^+\Mob$ is integrable
around $0$ since $P_\incr=P-P_\decr$ is locally bounded around $0$ by
\eqref{Pre1}. Recalling \eqref{eq.mbelow} we easily get for $0<s<s_0$
\begin{align*}
  G_\conv(s)=&\int_s^{s_0}\frac{P_\incr'(r)}{\mob (r)}(r-s)\,\d r\le
  \frac{s_0}{\Mob(s_0)}\int_s^{s_0}{P_\incr'(r)}\,\d r\le -P_\incr(0).
\end{align*}
Since $G_\conv$ is convex we conclude that it has a right limit at
$0$.

With \eqref{Pre1cons} and the above remarks at our disposal, we can
obtain simple lower bounds on the
the energy functional $\energy$ defined in \eqref{eq.e1}.
\EEE
\begin{lemma}[Basic properties of $\energy$]
  \label{lem.energyprops}
  The functional $\energy$ is
  \GGG bounded from below in the space $\admdens$ and lower semi-continuous with respect to
  the distributional convergence \eqref{eq:11} in the space
  $\admdens$. \EEE
  Finally the following estimate
  holds
  \begin{align}
    \label{eq.boundbelow}
    \frac18\|\sol\|_{H^1(\Omega)}^2 + \GGG\int_\Omega G_\conv(\sol)\,\d x
    \EEE \leq
    \energy[\mu] + \gGG\energyo  \qquad \foralltext \, \mu \in
    \admmeas,\EEE
  \end{align}
  \GGG where $G_\conv$ has been defined by \eqref{eq:21}-\eqref{eq:25}
  \EEE
  and
  the constant $\energyo$ only depend on $\Omega$, the mass $\mass$, the dimension $d$ and the
  function $G$.
\end{lemma}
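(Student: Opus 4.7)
The plan is to first establish the integral estimate \eqref{eq.boundbelow}; the uniform lower bound and the lower semicontinuity will then follow as corollaries. Starting from the decomposition \eqref{eq:21} of $G$ into convex and concave parts, one writes
\begin{align*}
  \energy[\mu] = \frac12\int_\Omega|\diff u|^2\,\d x + \int_\Omega G_\conv(u)\,\d x + \int_\Omega G_\conc(u)\,\d x.
\end{align*}
Combining the non-positivity of $G_\conc$ (a consequence of $G_\conc(s_0)=G_\conc'(s_0)=0$ together with concavity) with the quadratic lower bound $G_\conc(s)\ge -C(1+s^2)$ from \eqref{eq:24}, one obtains
\begin{align*}
  \energy[\mu]\;\ge\;\frac12\|\diff u\|_{L^2}^2+\int_\Omega G_\conv(u)\,\d x-C\|u\|_{L^2(\Omega)}^2-C|\Omega|.
\end{align*}
The task therefore reduces to absorbing $C\|u\|_{L^2}^2$ into $\tfrac12\|\diff u\|_{L^2}^2$ with enough room left to produce $\tfrac18\|u\|_{H^1}^2$ on the left-hand side.

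If $M<\infty$ this is immediate, since $0\le u\le M$ yields $\|u\|_{L^2}^2\le M^2|\Omega|$, an additive constant absorbable into $\energyo$. In the case $M=\infty$ the essential tool is a Gagliardo-Nirenberg-Sobolev interpolation exploiting the fixed-mass constraint $\|u\|_{L^1}=\mass$. Writing $v:=u-\mass_\Omega$, which has zero mean, the Sobolev-Poincar\'e inequality on the smooth bounded $\Omega$ delivers $\|v\|_{L^{2^\ast}}\le C\|\diff u\|_{L^2}$ (with $2^\ast=2d/(d-2)$ if $d\ge3$, and any large exponent if $d\le 2$); combining this with the $L^p$-interpolation $\|v\|_{L^2}\le\|v\|_{L^1}^{2/(d+2)}\|v\|_{L^{2^\ast}}^{d/(d+2)}$ and $\|v\|_{L^1}\le 2\mass$ gives
\begin{align*}
  \|u-\mass_\Omega\|_{L^2}^2\;\le\;C\,\mass^{4/(d+2)}\,\|\diff u\|_{L^2}^{2d/(d+2)}.
\end{align*}
Since $2d/(d+2)<2$, Young's inequality yields, for every $\eta>0$,
\begin{align*}
  \|u\|_{L^2}^2\;\le\;\eta\,\|\diff u\|_{L^2}^2+C_\eta(\mass,|\Omega|).
\end{align*}
Choosing $\eta$ so that $(C+\tfrac18)\eta\le\tfrac38$ and rearranging delivers \eqref{eq.boundbelow} with $\energyo$ depending only on $\Omega$, $\mass$, $d$ and the constant $C$ in \eqref{Pre1}.

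Once \eqref{eq.boundbelow} is available, the lower bound $\energy[\mu]\ge-\energyo$ is immediate from $\|u\|_{H^1}^2\ge 0$ and $G_\conv\ge 0$. For lower semicontinuity, let $u_n\weakto u$ in $\mathscr{D}'(\Rd)$ and, passing to a subsequence, suppose $\energy[\mu_n]\to L<\infty$. Estimate \eqref{eq.boundbelow} then yields a uniform $H^1$-bound on $u_n$, so by Rellich-Kondrachov one extracts a further subsequence with $u_n\weakto u$ weakly in $H^1(\Omega)$ and strongly in $L^2(\Omega)$. Weak $H^1$ lower semicontinuity of the Dirichlet integral handles $\tfrac12\int|\diff u|^2$; Fatou applied to the continuous, non-negative, convex integrand $G_\conv$ gives $\int G_\conv(u)\le\liminf\int G_\conv(u_n)$; and since $G_\conc$ is continuous with $|G_\conc(s)|\le C(1+s^2)$ while $u_n^2\to u^2$ in $L^1(\Omega)$, Vitali's theorem yields $\int G_\conc(u_n)\to\int G_\conc(u)$. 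Summing the three contributions produces $\energy[\mu]\le\liminf_n\energy[\mu_n]$.

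The main technical point is the Gagliardo-Nirenberg step in the unbounded case $M=\infty$: Poincar\'e-Wirtinger alone delivers a bound $\|u\|_{L^2}^2\le 2C_P\|\diff u\|_{L^2}^2+2\mass_\Omega^2|\Omega|$ with a fixed, possibly large, prefactor $2C_P$, and the quadratic term $C\|u\|_{L^2}^2$ stemming from $G_\conc$ cannot in general be absorbed into a small fraction of $\|\diff u\|_{L^2}^2$ without the sub-quadratic growth provided by interpolation against the fixed $L^1$-norm. Every other step is a routine application of standard lower-semicontinuity tools.
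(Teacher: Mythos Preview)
Your proof is correct and follows essentially the same approach as the paper. The paper uses the additive Gagliardo--Nirenberg inequality $\|u\|_{L^2}\le C_1\|\diff u\|_{L^2}^{d/(d+2)}\|u\|_{L^1}^{2/(d+2)}+C_2\|u\|_{L^1}$ directly, whereas you derive the equivalent multiplicative form via Sobolev--Poincar\'e on $u-\mass_\Omega$ plus $L^p$-interpolation; and for the $G_\conc$-term in the lower-semicontinuity step the paper combines Fatou on $G_\conc+C(1+u^2)\ge0$ with a concavity/limsup argument, while you invoke Vitali via the quadratic growth bound and strong $L^2$-convergence---both variants are standard and yield the same conclusion.
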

\begin{proof}
  To begin with, we prove the estimate \eqref{eq.boundbelow}.
  We recall the following Gagliardo-Nirenberg (\cite{Gagliardo59}, \cite{Nirenberg59}) interpolation inequality
  \begin{align}\label{eq.gagliardo}
    \|\sol\|_{L^2(\Omega)} \leq C_1 \|\diff\sol\|_{L^2(\Omega)}^\theta\|\sol\|_{L^1(\Omega)}^{1-\theta}
    + C_2\|\sol\|_{L^1(\Omega)}, \qquad \sol\in H^1(\Omega)
  \end{align}
  where $\theta=d/(d+2)$ and the constants $C_1$, $C_2$ only depend on $\Omega$ and $d$.
  In our specific case of $\sol\in\admdens\cap H^1(\Omega)$ we have, for every $\eps>0$,
  \begin{align}\label{eq.gagliardo2}
    \|\sol\|_{L^2(\Omega)} \leq C_1 \|\diff\sol\|_{L^2(\Omega)}^\theta \mass^{1-\theta}
    + C_2\mass \leq \eps\|\diff\sol\|_{L^2(\Omega)} +C_3(\eps)\mass
  \end{align}
  where $C_3(\eps):= ({2C_1}/{\eps})^{1/(1-\theta)}+C_2$.
  In particular
  \begin{align}\label{eq.gagliardo3}
    \|\sol\|_{L^2(\Omega)}^2 \leq 2\eps^2\|\diff\sol\|_{L^2(\Omega)}^2 +2C_3(\eps)^2\mass^2.
  \end{align}
  Using the decomposition \eqref{eq:21}, the lower bound
  \eqref{eq:24}, and \eqref{eq.gagliardo3},
  for the constant $C$ in \eqref{eq:24} we have
  \begin{equation}
    \label{eq.d2byenergy}
    \begin{split}
      \energy[\mu]
      & \geq \frac12 \|\diff\sol\|_{L^2}^2 - C\big( |\Omega| + \|\sol\|_{L^2}^2 \big) + \int_\Omega G_\conv(\sol)\d x \\
      & \geq \Big(\frac12-2\eps^2 C\Big) \|\diff\sol\|_{L^2}^2 - C( |\Omega|\nobreakspace + 2C_3(\eps)^2\mass^2) + \int_\Omega G_\conv(\sol)\d x .
    \end{split}
  \end{equation}
  Choosing $\eps^2=1/(8C)$ in \eqref{eq.d2byenergy} and using again \eqref{eq.gagliardo3} with $\eps^2=1/2$
  we obtain \eqref{eq.boundbelow} with the constant
  $\energyo :=  C( |\Omega|\nobreakspace + 2C_3(1/(2\sqrt{2C}))^2\mass^2) + 1/4 C_3(1/\sqrt{2})^2\mass^2$.
  Boundedness of $\energy$ from below is an immediate consequence of \eqref{eq.boundbelow}, recalling that $G_\conv$ is non-negative.
\EEE

  In order to prove lower semi-continuity,
  assume that a sequence 
  $\mu_k\in\mmspace$ converges 
  to a limit $\mu\in\mmspace$ according to \eqref{eq:11}.
  It is not restrictive to assume that $\mu_k\in H^1(\Omega)$
  and that
  \GGG $\sup_{k\to\infty}\energy[\mu_k]<+\infty.$ \EEE
  By estimate \eqref{eq.boundbelow}
  the sequence $\sol_k$ is bounded in $H^1(\Omega)$.
  Hence, up to subsequences, $\sol_k$ converges weakly in $H^1(\Omega)$, converges strongly in $L^2(\Omega)$,
  and converges pointwise $\Leb{d}$-a.e. to $\sol$.

  By \eqref{eq:24} and Fatou's Lemma we have that
  \begin{equation}\label{lsc12}
    \liminf_{k\to+\infty}\int_\Omega G_\conc(\sol_k) +C(1+\sol_k^2)\d x \geq
    \int_\Omega G_\conc(\sol)+C(1+\sol^2)\d x.
  \end{equation}
  The $L^2(\Omega)$ strong convergence of $\sol_k$ and concavity of $G_\conc$ yield
   \begin{equation}\label{lsc13}
    \limsup_{k\to+\infty}\int_\Omega G_\conc(\sol_k) +C(1+\sol_k^2)\d x \leq
    \int_\Omega G_\conc(\sol)+C(1+\sol^2)\d x.
  \end{equation}
  From \eqref{lsc12} and \eqref{lsc13} it follows
  \begin{equation}\label{lsc1}
    \lim_{k\to+\infty}\int_\Omega G_\conc(\sol_k)\d x =
    \int_\Omega G_\conc(\sol)\d x .
  \end{equation}
  \EEE
  Second, by Fatou's Lemma,
  it follows that
  \begin{equation}\label{lsc2}
    \liminf_{k\to+\infty}\int_\Omega \GGG G_\conv\EEE (\sol_k)\d x
    \geq \int_\Omega \GGG G_\conv(\sol)\EEE \d x .
  \end{equation}
  Finally, since $\diff\sol_k$ converges weakly in $L^2(\Omega)$ to $\diff\sol$,
  \begin{equation}\label{lsc3}
    \liminf_{k\to+\infty}\int_\Omega |\diff\sol_k|^2\d x \geq \int_\Omega |\diff\sol|^2\d x .
  \end{equation}
  The lower semi-continuity of $\energy$ follows from \eqref{lsc1}, \eqref{lsc2} and \eqref{lsc3}.
\end{proof}
\GGG
We will denote by $\energy_{\rm min}$ the minimum value (depending on
$\Omega,\mass,G$) of $\energy$
on $\admmeas$.
\EEE
Notice that estimate \eqref{eq.entropyest1} in combination with \eqref{eq.boundbelow} yields
\begin{align}
  \label{eq.entropyest}
  0
  \leq \entropy[\mu] \leq C(\energy[\mu]+\gGG\energyo),
  \qquad \foralltext \,\mu\in \Dom(\energy), 
\end{align}
with some constant $C$ only depending on $\Mob(s_0)$, $\Omega$, $\mass$ and $G$.

\section{A priori estimates}
\label{sct.apriori}

\subsection{Semi-discrete approximation}
\label{sct.discrete}
We begin by invoking a result from \cite{AmbrGiglSava} that guarantees the well-posedness of the minimizing movement scheme \eqref{eq.mm},
i.e. the existence of the semi-discrete curves $\bar \mu_\tau$ and their compactness for vanishing step size $\tau\downarrow0$.
\begin{proposition}
  \label{prp.discrete}
  \GGG
  In the setting of \S \ref{subsec:basic},
  \EEE
  for every $\mu_0\in\GGG\admmeas\cap \EEE\Dom(\energy)$
  and $\tau>0$ there exists a sequence $\{\mu_\tau^n\}_{n\geq0}$
  satisfying \eqref{eq.mm} and
  the following energy estimate:
  \begin{align}
    \label{eq.estimate1}
    \energy[\mu^N_\tau] + \frac1{2\tau} \sum_{n=1}^N \wass(\mu^n_\tau,\mu^{n-1}_\tau)^2 \leq \energy[\mu_0]
    \qquad \foralltext N\in\N.
  \end{align}
  Moreover, for every sequence $\tau_n\downarrow 0$ there exists a subsequence,
  still denoted by $\tau_{n}$,
  and a continuous limit curve $\mu:[0,+\infty)\to \gGG\admmeas$ 
  such that $\bar\mu_{\tau_{n}}(t)$ converges weakly to $\mu_t$
  \GGG in $H^1(\Omega)$ \EEE
  for every $t\geq0$.
  The curve $\mu$ is globally $1/2$-H\"older continuous 
  \begin{align}
    \label{eq.estimate2}
    \wass(\mu_t,\mu_s)\leq\big(2(\energy[\mu_0]\GGG -\energy_{\rm min})\EEE\big)^{1/2}|s-t|^{1/2} \quad \foralltext s,t\in[0,+\infty).
  \end{align}
  The curve $t\mapsto u_t$ satisfies  \eqref{eq.energydecay},
  \eqref{eq.energydecay2}, \eqref{eq.energyconv} and \eqref{eq.maxest}.
  \EEE
\end{proposition}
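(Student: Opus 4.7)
The plan is to follow the by-now standard direct method for minimizing movement schemes from \cite{AmbrGiglSava}, relying on the lower bound and lower semi-continuity of $\energy$ established in Lemma~\ref{lem.energyprops} together with the metric-compactness properties in Proposition~\ref{prp.wm}. For the existence of each $\mu_\tau^{n+1}$ in \eqref{eq.mm}, I assume inductively that $\mu_\tau^n\in\admdens$ with $\energy[\mu_\tau^n]<\infty$ (starting from $\mu_0$). The functional $\Psi_\tau^n$ is bounded below on $\admdens$ by $\energy_{\rm min}$ and takes a finite value at $\nu=\mu_\tau^n$. A minimizing sequence $\{\nu_k\}$ has $\energy[\nu_k]$ uniformly bounded, hence is bounded in $H^1(\Omega)$ via \eqref{eq.boundbelow}; a subsequence converges weakly in $H^1(\Omega)$ to some $\nu_\infty\in\admdens$ (the latter is weakly closed in $L^1$) and hence distributionally in the sense \eqref{eq:11}. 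Proposition~\ref{prp.wm}(a) then gives $\wass(\mu_\tau^n,\nu_k)\to\wass(\mu_\tau^n,\nu_\infty)$, while $\energy$ is lower semi-continuous by Lemma~\ref{lem.energyprops}, so $\nu_\infty\in\argmin\Psi_\tau^n$ and we set $\mu_\tau^{n+1}:=\nu_\infty$.

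The variational inequality $\Psi_\tau^n(\mu_\tau^{n+1})\le \Psi_\tau^n(\mu_\tau^n)$ summed over $n=0,\ldots,N-1$ yields the energy estimate \eqref{eq.estimate1}. A Cauchy-Schwarz step then gives, for $0\le m\le N$,
\begin{align*}
\wass(\mu_\tau^m,\mu_\tau^N)^2
\le (N-m)\sum_{k=m+1}^N \wass(\mu_\tau^{k-1},\mu_\tau^k)^2
\le 2\tau(N-m)\bigl(\energy[\mu_0]-\energy_{\rm min}\bigr),
\end{align*}
hence $\wass(\bar\mu_\tau(s),\bar\mu_\tau(t))^2\le 2(|t-s|+\tau)(\energy[\mu_0]-\energy_{\rm min})$. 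Combining \eqref{eq.estimate1} with \eqref{eq.entropyest} confines $\bar\mu_\tau(t)$ to a $\wass$-compact sublevel set of $\entropy$ (by Proposition~\ref{prp.wm}(b)); a refined Arzelà-Ascoli argument in the metric space $(\admmeas,\wass)$ produces a subsequence $\tau_n\downarrow 0$ and a $1/2$-Hölder limit curve $\mu:[0,\infty)\to\admmeas$ with $\bar\mu_{\tau_n}(t)\to \mu_t$ in $\wass$ for every $t\geq 0$, and the quantitative bound \eqref{eq.estimate2} follows by lower semi-continuity as $\tau\to 0$.

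To upgrade this to weak $H^1$-convergence and derive the energy properties, fix $t\ge 0$: the family $\bar\mu_{\tau_n}(t)$ is $H^1$-bounded by \eqref{eq.estimate1} and \eqref{eq.boundbelow}, and any weak $H^1$-subsequential limit converges distributionally and therefore in $\wass$ on the entropy sublevel (Proposition~\ref{prp.wm}(a)), so must equal $\mu_t$. This identifies the limit uniquely, so in fact $\bar\mu_{\tau_n}(t)\weakto \mu_t$ weakly in $H^1$ without further extraction, giving \eqref{eq.maxest} and, by $H^1$-lower semi-continuity of $\energy$ combined with the discrete monotonicity in \eqref{eq.estimate1}, the decay \eqref{eq.energydecay}. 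For \eqref{eq.energydecay2} and \eqref{eq.energyconv}, I would apply Helly's selection theorem to the decreasing step functions $t\mapsto\energy[\bar\mu_{\tau_n}(t)]$: after a further diagonal extraction absorbed into $\tau_n$, they converge pointwise to a decreasing function $\varphi$ dominating $\energy[\mu_t]$ everywhere and coinciding with it at continuity points of $\varphi$, i.e.\ almost everywhere. The main subtlety I foresee is the simultaneous handling of the two modes of convergence (weak $H^1$ and $\wass$) on a single subsequence; this is resolved by the uniqueness argument above, which crucially relies on the equivalence \eqref{eq:12} between distributional and $\wass$-convergence on entropy sublevels, so nothing beyond bookkeeping is required.
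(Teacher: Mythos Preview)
Your proposal is correct and follows essentially the same route as the paper: the paper simply invokes the general minimizing-movement machinery of \cite{AmbrGiglSava} (Sections~2.1, 2.2 and Corollary~3.3.4), noting that Lemma~\ref{lem.energyprops} supplies the needed lower semicontinuity and coercivity, and then upgrades the $\wass$-convergence to weak $H^1$-convergence via the uniform energy bound \eqref{eq.estimate1}. You have spelled out exactly those steps in detail---direct method for existence of minimizers, telescoping for \eqref{eq.estimate1}, Cauchy--Schwarz plus refined Arzel\`a--Ascoli for compactness and \eqref{eq.estimate2}, uniqueness of weak limits for the $H^1$-upgrade, and Helly for \eqref{eq.energydecay2}/\eqref{eq.energyconv}---so there is no substantive difference.
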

Proposition \ref{prp.discrete} is obtained by combining the results
from Chapters 2 and 3 (see in particular Sections 2.1, 2.2 and Corollary 3.3.4) of \cite{AmbrGiglSava}.
The properties of $\energy$ proven in Lemma \ref{lem.energyprops} are
sufficient to apply this general theory
\GGG and the uniform upper bound on $\energy [\bar u_\tau]$ given by
\eqref{eq.estimate1}
improves the pointwise convergence of $\bar u_{\tau_n}$ with respect
to $\wass$ to the weak convergence in $H^1(\Omega)$. \EEE
It should be remarked that we do not claim uniqueness of solutions,
even on this discrete level,
\GGG except in the case when $\altenergy$ is a convex functional. \EEE

\subsection{Flow interchange lemma}
For the derivation of $\tau$-independent a priori estimates on the interpolations $\bar\mu_\tau$,
we employ the device of the \emph{flow interchange lemma}, which has been proven in \cite{MattMCanSava}.
Before reviewing the lemma and its proof, we
\GGG recall the definition of $\lambda$-flow in the metric space
$\admmeas$ given in \cite{DaneSava}.
\EEE
%
\begin{definition}
  Let $\auxil :\admmeas\to (-\infty,+\infty]$ be a proper lower semi-continuous functional and $\lambda\in\R$.
  A continuous semi-group $\SG^t:\Dom(\auxil)\to \Dom(\auxil)$,
  $t\ge0$,
  is a \emph{$\lambda$-flow for $\auxil$}
  if it satisfies the \emph{Evolution Variational Inequality} (EVI)
  \begin{equation}\label{EVIgen}
    \frac12\limsup_{h\downarrow0}\bigg[ \frac{\wass(\SG^h( \mu),\nu)^2-\wass(\mu,\nu)^2}h\bigg]
    + \frac{\lambda}{2}\wass(\mu,\nu)^2 \GGG+\auxil[\mu]\EEE
    \leq \auxil[\nu] ,
  \end{equation}
  for all measures $\mu,\nu\in \Dom(\auxil)$
  \GGG with $\wass(\mu,\nu)<+\infty$. \EEE
\end{definition}
\GGG
Recall that a continuous semigroup $\SG$ on a set $D\subset \admdens$ is a family of
maps $\SG^t:D\to D$, $t\ge0$, satisfying
\begin{equation}
  \label{eq:6}
  \SG^{t+s}(\mu)=\SG^t(\SG^s(\mu)),\quad
  \lim_{t\downarrow0}\wass(\SG^t(\mu),\mu)=0\quad
  \foralltext \mu\in D.
\end{equation}
Notice that the continuity of $\SG$ is already coded in
\eqref{EVIgen}: it is sufficient to choose
$v:=u$ in
\eqref{EVIgen}.

\eqref{eq:6} and the triangle inequality yields
\begin{equation}
  \label{eq:7}
  \wass(\SG^t(\mu),\mu)<+\infty\quad\foralltext \mu\in D,\ t\ge0;
\end{equation}
in particular the ``$\limsup$'' in \eqref{EVIgen} is well defined.
\EEE
%
\begin{lemma}[Flow interchange Lemma \cite{MattMCanSava}]\label{lem.flowinterchange}
  Assume that $\SG_\auxil$ is a $\lambda$-flow for the proper, lower
  semi-continuous functional $\auxil$ in $\admmeas$ and
  let $\mu^n_\tau$ be a $n$-th step approximation constructed by the minimizing movement scheme \eqref{eq.mm}.
  \GGG If $\mu^n_\tau\in \Dom(\auxil)$
  \EEE
  then
  \begin{align}
    \label{eq.flowinterchange}
    \auxil[\mu^n_\tau] - \auxil[\mu^{n-1}_\tau]
    \leq \tau \liminf_{h\downarrow0} \bigg(\frac{\energy[\SG_\auxil^h(\mu^n_\tau)]- \energy[\mu^n_\tau]}h\bigg)
    - \frac\lambda2 \wass\big(\mu^n_\tau,\mu^{n-1}_\tau\big)^2 .
  \end{align}
\end{lemma}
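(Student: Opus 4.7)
The plan is to exploit the minimality property of $\mu^n_\tau$ by testing against the perturbation $\nu=\SG^h_\auxil(\mu^n_\tau)$, and then compare the resulting inequality with the Evolution Variational Inequality \eqref{EVIgen} satisfied by the $\lambda$-flow. This combines two variational principles acting on the same quantity $\frac{1}{2\tau}\wass(\mu^{n-1}_\tau,\cdot)^2$.

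First, since $\mu^n_\tau$ minimizes $\Psi^{n-1}_\tau(\nu)=\frac1{2\tau}\wass(\mu^{n-1}_\tau,\nu)^2+\energy[\nu]$ over $\admdens$, and since $\SG^h_\auxil(\mu^n_\tau)\in\Dom(\auxil)\subset\admdens$ for every $h\ge0$ (here the assumption $\mu^n_\tau\in\Dom(\auxil)$ is used to ensure that the flow can be started), the admissible comparison $\nu=\SG^h_\auxil(\mu^n_\tau)$ yields
\begin{equation*}
  \energy[\mu^n_\tau]-\energy[\SG^h_\auxil(\mu^n_\tau)]\le\frac1{2\tau}\Big(\wass(\mu^{n-1}_\tau,\SG^h_\auxil(\mu^n_\tau))^2-\wass(\mu^{n-1}_\tau,\mu^n_\tau)^2\Big).
\end{equation*}
Dividing by $h>0$ and taking $\limsup$ as $h\downarrow0$ on both sides (noting that the left-hand side becomes a $-\liminf$) gives a bound on the forward slope of $\energy$ along the flow in terms of the squared-distance derivative to $\mu^{n-1}_\tau$.

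Second, I apply \eqref{EVIgen} with $\mu=\mu^n_\tau$ and $\nu=\mu^{n-1}_\tau$: since $\wass(\mu^n_\tau,\mu^{n-1}_\tau)<+\infty$ by the finiteness of $\energy[\mu_0]$ and \eqref{eq.estimate1}, the EVI is applicable and yields
\begin{equation*}
  \frac1{2}\limsup_{h\downarrow0}\frac{\wass(\SG^h_\auxil(\mu^n_\tau),\mu^{n-1}_\tau)^2-\wass(\mu^n_\tau,\mu^{n-1}_\tau)^2}{h}\le \auxil[\mu^{n-1}_\tau]-\auxil[\mu^n_\tau]-\frac{\lambda}{2}\wass(\mu^n_\tau,\mu^{n-1}_\tau)^2.
\end{equation*}

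Finally, chaining the two estimates, multiplying through by $\tau$, and rearranging produces \eqref{eq.flowinterchange}. The argument is essentially bookkeeping once the EVI and the Euler--Lagrange-type comparison from minimality are both in place; the only subtlety is making sure the perturbation $\SG^h_\auxil(\mu^n_\tau)$ is a valid competitor in the minimizing movement scheme, which is guaranteed by $\SG^h_\auxil$ mapping $\Dom(\auxil)\subset\admdens$ into itself, and that the $\wass$-distances involved are finite, which follows from \eqref{eq:7} and the triangle inequality. No compactness or semicontinuity arguments are needed here; the statement is a pointwise-in-$n$ algebraic consequence of the two variational structures.
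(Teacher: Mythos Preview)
Your proof is correct and follows essentially the same approach as the paper: use the minimality of $\mu^n_\tau$ against the competitor $\SG^h_\auxil(\mu^n_\tau)$, divide by $h$ and pass to the $\limsup$, then apply the EVI \eqref{EVIgen} with $\mu=\mu^n_\tau$ and $\nu=\mu^{n-1}_\tau$ to control the distance derivative. The additional remarks you give on admissibility of the competitor and finiteness of the distances are helpful but not present in the paper's terse version.
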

\begin{proof}
  (C.f. \cite{MattMCanSava})
  By definition of $\mu_\tau^n$ as a minimizer in \eqref{eq.mm},
  \begin{align*}
    \frac1{2\tau}\wass\big(\mu_\tau^n,\mu_\tau^{n-1}\big)^2 + \energy[\mu^\tau_n]
    \le \frac1{2\tau}\wass\big(\SG_\auxil^h(\mu_\tau^n),\mu_\tau^{n-1}\big)^2 + \energy[\SG_\auxil^h(\mu^\tau_n)]
  \end{align*}
  holds for every $h>0$.
  This implies
  \begin{align*}
    - \tau \liminf_{h\downarrow0} \big[ h^{-1} \big( \energy[\SG_\auxil^h(\mu^n_\tau)] - \energy[\mu^n_\tau] \big) \big]
    \le \frac12 \limsup_{h\downarrow0}  \big[h^{-1}\wass\big(\SG_\auxil^t(\mu_\tau^n),\mu_\tau^{n-1}\big)^2-\wass(\mu_\tau^n,\mu_\tau^{n-1})^2\big].
  \end{align*}
  To conclude \eqref{eq.flowinterchange} from here,
  apply \eqref{EVIgen} with the choices $\mu=\mu_\tau^n$ and $\nu=\mu_\tau^{n-1}$.
\end{proof}

\subsection{Eulerian calculus}
In order to apply the flow interchange Lemma \ref{lem.flowinterchange} with a particular auxiliary functional $\auxil$,
we need to \GGG exhibit the \EEE
associated semigroup $\SG$
\GGG
(usually given implicitly as the solution to a nonlinear evolution
equation)
\EEE
and to verify that it is indeed a
$\lambda$-flow,
i.e., it satisfies the EVI \eqref{EVIgen} with a finite constant $\lambda$.
A very general strategy to attack this problem is the Eulerian calculus for transportation metrics,
that has been developed by the third author in \cite{DaneSava}, based on earlier work by Otto and Westdickenberg \cite{OttoWest}.
Similar to the flow interchange estimate,
the basic idea is to simplify estimates by exchanging two time-like
derivatives.
We also need that $\SG$ can be suitably approximated by semigroups on
smooth densities: here is the relevant definition.
\EEE
%
\begin{definition}
  \label{def:mollification}
  \GGG
  Let us fix a nonnegative vanishing sequence $\delta_n$,
  let $\Omega_n \supset \neigh{\delta_n}$, a decreasing sequence of smooth convex sets converging to $\Omega$,
  \EEE
  \GGG
  let $\auxil:\admdens\to(-\infty,+\infty]$ be
  proper and l.s.c.\ functionals, and let $\SG$ be a semi-group on \GGG
  $\Dom(\auxil)\subset \admdens$.

  We say that
  $\{\auxil_n,\SG_n\}_{n\in\N}$
  is a \emph{family of mollifications} for $\auxil,\SG$
  if
  \begin{enumerate}[(a)]
  \item
    $\auxil_n:\regdensx{\Omega_n}\to \R$ is a regular functional
    according to \eqref{eq:3}.
    \item
      Each $\SG_n$ is a regular semi-group on
      $\regdensx{\Omega_n}$ according to \eqref{eq:18}. 
    \item
      For every $\rho_0,\rho_1\in \Dom(\auxil)$ with
      $\wass(\rho_0,\rho_1)<\infty$
      the regular densities $\gamma_n(i)$ given as in (e) of
      Proposition \ref{prp.wm} satisfy
      \begin{equation}
        \label{eq:20}
        \lim_{n\to\infty}\auxil_n[\gamma_n(i)]=\auxil[\rho_i]\qquad i=0,1.
      \end{equation}
    \item
        For every sequence
  $\dens_n\in \Dom(\auxil_n)$ converging to
  $\dens\in \Dom(\auxil)$ in $L^1(\Rd)$ as $n\to\infty$ with
  $\auxil_n[\dens_n]\to \auxil[\dens]$ we have
  $\SG^t_n\dens_n\weakto\SG^t\dens$ in $\mathscr D'(\Rd)$ and
  $\liminf_{n\to\infty}\auxil_n[\SG_n^t\dens_n]\ge
  \auxil[\SG^t\dens]$ for all
  $t\ge0$.
  \end{enumerate}
\end{definition}
Let $\auxil,\auxil_n$ and $\SG,\SG_n$ as in the definition above.
For a given $n\in \N$
consider a regular curve $\curv_n:[0,1]\to\regdensx{\Omega_n}$ and for every $h\ge0$,
introduce $\curv_n^h:[0,1]\to\regdensx{\Omega_n}$ --- the \emph{$h$-perturbation of $\curv_n$} ---
by
\begin{align*}
  \curv_n^h(s) = \SG_n^{hs}\curv_n(s).
\end{align*}
\GGG Notice that the $(h,s)\mapsto \curv_n^h(s,x)$ is of class $C^1$
in $[0,\infty)\times [0,1]$ thanks to the regularity of $\SG_n$.
\EEE
Also, introduce the \emph{action} of the perturbed curves
\begin{equation}
  \label{eq:19}
  \act hn(s) = \int_{\Omega_n} |\diff\pot^h_n(s)|^2\mob(\curv^h_n(s))\,\d x,
\end{equation}
where the $\pot^h_n(s)\in H^1(\Omega_n)$ form a $s$-differentiable family of solutions to the associated Neumann problems
\begin{align}
  \label{eq.Neumann2}
  -\dv\big(\mob(\curv^h_n(s,\cdot))\diff\pot^h_n(s,\cdot)\big) = \partial_s\curv^h_n(s,\cdot) \quad \text{in $\Omega_n$},
  \qquad
  \nml\cdot\diff\pot^h_n(s,\cdot) = 0 \quad\text{on $\partial\Omega_n$}.
\end{align}
These Neumann problems are solvable because $\curv^h_n$ is a regular
curve of densities in $\regdensx{\Omega_n}$;
in particular, the mass is constant, and thus $\partial_s\curv^h_n(s,\cdot)$ has vanishing average on $\Omega_n$.
The following result is essentially an adaptation of Theorem 2.2 in \cite{DaneSava} to the situation at hand.
\begin{lemma}
  \label{lem.eulerian}
  Under the hypotheses and with the definitions above,
  assume that $\inf_n\auxil_n\ge \auxilmin>-\infty$ and $h\mapsto\auxil_n[\SG_n^h\dens]$ are non-increasing for
  every $\dens\in \regdensx{\Omega_n}$,
  and
  \GGG there exists $\lambda\le 0$ (independent of $n$ and of
  the considered curves $\curv_n$) such that
  \EEE the inequality
  \begin{align}
    \label{eq.eulerian}
    \frac12\partial_h\act hn(s) + s\lambda\act hn(s) \le -\partial_s\auxil_n[\curv^h_n(s)]
  \end{align}
  holds 
  for all $s\in[0,1]$ and all $h\ge0$.
  Then 
  $\SG$
  is a $\lambda$-flow for $\auxil$.
\end{lemma}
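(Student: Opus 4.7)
The plan is to derive the EVI \eqref{EVIgen} for arbitrary $\mu,\nu\in\Dom(\auxil)$ with $\wass(\mu,\nu)<\infty$ by integrating the Eulerian inequality \eqref{eq.eulerian} along carefully constructed perturbations of approximate geodesics. First, Proposition \ref{prp.wm}(e) supplies a sequence $\Omega_n\supset\neigh{\delta_n}$ of smooth convex sets and regular approximate geodesics $\curv_n\colon[0,1]\to\regdensx{\Omega_n}$ with $\curv_n(0)\to\nu$ and $\curv_n(1)\to\mu$ in $L^1(\R^d)$ and action converging to $\wass(\mu,\nu)^2$; Definition \ref{def:mollification}(c) further ensures $\auxil_n[\curv_n(0)]\to\auxil[\nu]$ and $\auxil_n[\curv_n(1)]\to\auxil[\mu]$. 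After a reparametrization to asymptotically constant speed one also has $\int_0^1 s\,\act{0}{n}(s)\,\d s\to\tfrac12\wass(\mu,\nu)^2$.

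Next I define the perturbed curves $\curv_n^h(s):=\SG_n^{hs}\curv_n(s)$, whose endpoints $\curv_n^h(0)=\curv_n(0)$ and $\curv_n^h(1)=\SG_n^h\curv_n(1)$ converge to $\nu$ and $\SG^h\mu$ respectively (the latter via Definition \ref{def:mollification}(d) combined with Proposition \ref{prp.wm}(a)). Setting $\phi_n(h):=\int_0^1\act{h}{n}(s)\,\d s$ and $\psi_n(h):=\int_0^1 s\,\act{h}{n}(s)\,\d s$, the action bound \eqref{eq.wmabove} in $\Omega_n$ combined with the lower semi-continuity \eqref{lsc} gives
\begin{equation*}
  \wass(\SG^h\mu,\nu)^2\le\liminf_{n\to\infty}\phi_n(h),\qquad \lim_{n\to\infty}\phi_n(0)=\wass(\mu,\nu)^2.
\end{equation*}
Integrating \eqref{eq.eulerian} first over $s\in[0,1]$ (using the fundamental theorem for $\partial_s\auxil_n$) and then over $h\in[0,t]$ yields the key inequality
\begin{equation*}
  \tfrac12\phi_n(t)-\tfrac12\phi_n(0)+\lambda\int_0^t\psi_n(h)\,\d h\le\int_0^t\bigl(\auxil_n[\curv_n(0)]-\auxil_n[\SG_n^h\curv_n(1)]\bigr)\,\d h.
\end{equation*}

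Passing $n\to\infty$: on the left I use the liminf-action bound, the limit of $\phi_n(0)$, and the sign $\lambda\le 0$ (which converts $\liminf_n[\lambda\int\psi_n]$ into $\lambda\limsup_n\!\int\psi_n$); on the right I apply reverse Fatou (legitimate thanks to $\auxil_n\ge\auxilmin$) together with Definition \ref{def:mollification}(d) to get $\limsup_n\!\int_0^t(-\auxil_n[\SG_n^h\curv_n(1)])\,\d h\le\int_0^t(-\auxil[\SG^h\mu])\,\d h$. The outcome is
\begin{equation*}
  \tfrac12\wass(\SG^t\mu,\nu)^2-\tfrac12\wass(\mu,\nu)^2+\lambda\,\Lambda(t)\le\int_0^t\bigl(\auxil[\nu]-\auxil[\SG^h\mu]\bigr)\,\d h,\qquad \Lambda(t):=\limsup_n\!\int_0^t\psi_n(h)\,\d h.
\end{equation*}
Dividing by $t$ and letting $t\downarrow 0$ now produces the EVI: the right-hand side tends to $\auxil[\nu]-\auxil[\mu]$, since $\auxil[\SG^h\mu]\to\auxil[\mu]$ as $h\downarrow 0$ by monotonicity of $\auxil$ along $\SG$ combined with the lower semi-continuity of $\auxil$, while the constant-speed property and a uniform-in-$n$ control of $\psi_n$ give $\Lambda(t)/t\to\tfrac12\wass(\mu,\nu)^2$.

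The principal obstacle is precisely the last passage $\Lambda(t)/t\to\tfrac12\wass(\mu,\nu)^2$: the $\limsup_n$ in $\Lambda$ is forced by the sign $\lambda\le 0$, so one cannot swap limits naively. To handle this, I would differentiate the Eulerian inequality in $h$ and integrate by parts in $s$ to obtain a uniform-in-$n$ local Lipschitz estimate on $h\mapsto\psi_n(h)$ near $h=0$, relying on the lower bound $\auxilmin$, a Gronwall bound for $\phi_n$, and the regularity built into Definition \ref{def:mollification}(a,b); this then legitimizes the exchange of $\limsup_n$ with the small-$t$ averaging that identifies $\Lambda(t)/t$ with the limit value $\tfrac12\wass(\mu,\nu)^2$.
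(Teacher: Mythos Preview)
Your proof has a real gap at exactly the point you flag: controlling $\Lambda(t)/t$ as $t\downarrow0$. The difficulty is that $\psi_n(h)=\int_0^1 s\,\act{h}{n}(s)\,\d s$ involves the action of the \emph{perturbed} curve $\curv_n^h$, and the Eulerian inequality \eqref{eq.eulerian} gives only a one-sided bound on $\partial_h\act{h}{n}$; this is not enough to produce the uniform-in-$n$ Lipschitz control of $h\mapsto\psi_n(h)$ that your sketch requires, and the suggested ``differentiate in $h$ and integrate by parts in $s$'' does not obviously close.

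The paper avoids this problem by a different organization of the same ingredients. Instead of integrating \eqref{eq.eulerian} directly, it multiplies by the integrating factor $e^{2\lambda h s}$, so that the $\lambda$-term is absorbed into a total $h$-derivative:
\[
\tfrac12\partial_h\Big[e^{2\lambda h s}\act{h}{n}(s)\Big]\le -e^{2\lambda h s}\partial_s\auxil_n[\curv_n^h(s)].
\]
After integrating in $s$ (and integrating by parts on the right, using $\lambda\le0$ and $\auxil_n\ge\auxilmin$ to discard the boundary term), and then in $h\in[0,H]$, one obtains an inequality of the form
\[
\tfrac12\int_0^1 e^{2\lambda H s}\act{H}{n}(s)\,\d s + \frac{1-e^{2\lambda H}}{-2\lambda}\auxil_n[\SG_n^H\mu_n^1]
\le \tfrac12\int_0^1\act{0}{n}(s)\,\d s + H\,\auxil_n[\mu_n^0].
\]
The crucial step is then a reparametrization estimate (cf.\ \cite[Lemma~5.2]{DaneSava}) which converts the weighted action $\int_0^1 e^{2\lambda Hs}\act{H}{n}(s)\,\d s$ directly into $\frac{e^{-2\lambda H}-1}{-2\lambda H}\wassset{\Omega_n}(\mu_n^0,\SG_n^H\mu_n^1)^2$. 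Now the only limits in $n$ needed are those already supplied by Proposition~\ref{prp.wm} and Definition~\ref{def:mollification}; no control of the perturbed action for intermediate $h$ is required, and the EVI follows by dividing by $H$ and sending $H\downarrow0$. The exponential weight plus reparametrization is precisely the device that replaces your problematic $\psi_n$-term.
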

\begin{proof}
  The core idea is to
  \GGG prove and integrated form of \eqref{EVIgen} by estimating
  the perturbed action \eqref{eq:19} starting from a family of approximating geodesics \EEE
  -- provided by (\ref{bull.approxgeodesic}) in Proposition \ref{prp.wm} --
  connecting two given admissible measures
  $\mu,\nu\in \Dom(\auxil)\subset \admmeas$.
  Without loss of generality, we assume that $\auxil_n$ are non-negative and
  \GGG $\lambda<0$; the case $\lambda=0$ follows by obvious
  modifications. \EEE

  Given $\mu,\nu\in \Dom(\auxil)\subset \admmeas$ at finite distance, and
  a family of approximating geodesic $\curv_\deltan$ on $\regdensx{\Omega_{\deltan}}$ between $\nu$ and $\mu$
  in the sense of (\ref{bull.approxgeodesic}) in Proposition \ref{prp.wm}, \EEE
  define $\mu_\deltan^0=\curv_\deltan(0)$ and $\mu_\deltan^1=\curv_\deltan(1)$.
  Multiply \eqref{eq.eulerian} by $e^{2\lambda h s}$ and integrate with respect to $s\in[0,1]$;
  this gives
  \begin{align*}
    \frac12 \frac{\d}{\d h} \int_0^1 e^{2\lambda h s}\act h\deltan(s)\d s
    &\le - \int_0^1 e^{2\lambda h s}\partial_s\auxil_n[\curv^{\gGG h}_\deltan(s)]\d s \\
    &= \auxil_n[\mu_\deltan^0] - e^{2\lambda h}\auxil_n[\SG_{\deltan}^h\mu_\deltan^1]
    + \int_0^1 2\lambda h e^{2\lambda h s}
    \auxil_n[\curv_\deltan^h(s)]\d s \\
    & \le \auxil_n[\mu_\deltan^0] - e^{2\lambda h}\auxil_n[\SG_{\deltan}^h\mu_\deltan^1] ,
  \end{align*}
  since $\lambda<0$ while $\auxil_n$ is non-negative.
  Next, integrate with respect to $h\in[0,H]$,
  which yields
    \begin{align*}
    \frac12 \int_0^1 e^{2\lambda H s}\act H\deltan(s)\d s + \frac{1-e^{2\lambda H}}{-2\lambda} \auxil_n[\SG_{\deltan}^H\mu_\deltan^1]
    \le \frac12\int_0^1 \act {\gGG0}\deltan(s)\d s + H\auxil_n[\mu_\deltan^0],
  \end{align*}
  where we also used the fact that $h\mapsto\auxil_n[\SG_{\deltan}^h\mu_\deltan^1]$ is non-increasing.
 Further, a reparametrization of $s\mapsto\curv_\deltan^h(s)$ similar to that used in \cite[Lemma 5.2]{DaneSava}
  yields
  \begin{align*}
    \frac{e^{-2\lambda H}-1}{-2\lambda H}\wass(\mu_\deltan^0,\SG_{\deltan}^H\mu_\deltan^1)^2
    \le \int_0^1 e^{2\lambda H s}\act H\deltan(s)\d s.
  \end{align*}
  In summary, we have
  \begin{align*}
    \frac{e^{-2\lambda H}-1}{-2\lambda H}\wass(\mu_\deltan^0,\SG_{\deltan}^H\mu_\deltan^1)^2
    + \frac{1-e^{2\lambda H}}{-2\lambda} \auxil_n[\SG_{\deltan}^H\mu_\deltan^1]
    \le \frac12\int_0^1 \act {\gGG0}\deltan(s)\d s + H\auxil_n[\mu_\deltan^0].
  \end{align*}
  By our choice of $\curv_\deltan$, \GGG (e) of Proposition
  \ref{prp.wm} and \eqref{eq:20} yield on one hand that
  \begin{align*}
    \lim_{\deltan\to\infty}\int_0^1 \act 0\deltan(s)\d s =
    \wass(\mu,\nu)^2,\qquad
    \lim_{\deltan\to\infty}\auxil_n[\mu_\deltan^0]=\auxil[\nu].
  \end{align*}
  On the other hand, we know by Proposition \ref{prp.wm} and the
  properties listed in Definition \ref{def:mollification}
  that $u^1_\deltan$ converges to the density $\mu$ in $L^1(\Rd)$ and
  \EEE
  \begin{align*}
    \auxil[\SG^H\mu]\le\liminf_{\deltan\to\infty}\auxil_n[\SG_{\deltan}^H\mu_\deltan^1],
    \qquad
    \wass(\SG^H\mu,\nu)^2 \le \liminf_{\deltan\to\infty}\wass(\SG_{\deltan}^H\mu_\deltan^1,\mu_\deltan^0)^2.
  \end{align*}
  Altogether, this yields the inequality
  \begin{align*}
    \frac{e^{-2\lambda H}-1}{-2\lambda H}\wass(\SG^H\mu,\nu)^2
    + \frac{1-e^{2\lambda H}}{-2\lambda} \auxil[\SG^H\mu]
    \le \frac12\wass(\mu,\nu)^2 + H\auxil[\nu],
  \end{align*}
  from which the EVI property \eqref{EVIgen} is deduced after division by $H>0$ in the limit $H\downarrow0$.
\end{proof}

\section{Proof of Theorem \ref{thm.main}}
\label{sct.Lipschitz}
\GGG Throughout this section we use the notation introduced in \S
\ref{subsec:MM}. \EEE

\subsection{$H^2$-regularity and strong convergence}

The goal of the following is to prove:
\begin{proposition}
  \label{prp.strong}
  In the setting of \S \ref{subsec:basic} 
  each solution $u^n_\tau$ of the minimizing movement scheme
  \eqref{eq.mm} satisfies
  \begin{equation}\label{eq.h2regularity}
    \sol_\tau^n\in H^2(\Omega) \qquad \foralltext \, n \in \N, \ \tau>0,
  \end{equation}
  and the following bound holds for the piecewise constant interpolant $\bar\sol_\tau$,
  \begin{equation}\label{eq.h2}
    \int_0^T \|\bar\sol_\tau(t)\|_{H^2(\Omega)}^2\d t \leq CT(\energyo+\energy[\mu_0]) < +\infty
    \qquad \foralltext\,T>0,
  \end{equation}
  with a constant $C$ independent of $\tau>0$.
  Moreover, every sequence $\tau_k\downarrow0$ contains a subsequence (still denoted by $\tau_k$) such that
  \begin{align}
    \label{eq.strongconv}
    &\bar\sol_{\tau_k} \to \sol \text{ strongly in }
    L^2(0,T;H^1(\Omega))
    \qquad \foralltext\,T>0, \\
    \label{eq.weakconv}
    &\bar\sol_{\tau_k} \to \sol \text{ weakly in }  L^2(0,T;H^2(\Omega)) \qquad \foralltext\,T>0.
  \end{align}
\end{proposition}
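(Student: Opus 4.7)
The strategy is to apply the flow interchange Lemma \ref{lem.flowinterchange} with the entropy functional $\altentropy$ from \eqref{eq:2} as the auxiliary functional. Its $\wass$-gradient flow $\SG$ is formally the heat flow with homogeneous Neumann boundary conditions, since $\dv(\mob(v)\diff U'(v)) = \Delta v$ by the very definition $U''\cdot\mob \equiv 1$. As the concavity of $\mob$ makes $\altentropy$ geodesically convex along $\wass$-geodesics \cite{CarrLisiSavaSlep}, I would invoke Lemma \ref{lem.eulerian} to prove that $\SG$ is a genuine $0$-flow for $\altentropy$ on $(\admdens,\wass)$: one constructs mollifications $(\altentropy_n,\SG_n)$ on $\regdensx{\Omega_n}$ from the classical Neumann heat semigroup on the enlarged smooth convex domains $\Omega_n$, and verifies the Eulerian inequality \eqref{eq.eulerian} along smooth strictly positive curves $\curv_n$ by a direct computation of $\partial_h\act hn$.

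Once $\SG$ is identified as a $0$-flow for $\altentropy$, Lemma \ref{lem.flowinterchange} yields
\begin{equation*}
  \altentropy[\sol_\tau^n] - \altentropy[\sol_\tau^{n-1}] \le \tau\,\liminf_{h\downarrow 0}\frac{\altenergy[\SG^h\sol_\tau^n]-\altenergy[\sol_\tau^n]}{h}.
\end{equation*}
For smooth, strictly positive $u$ satisfying $\nml\cdot\diff u=0$ on $\partial\Omega$, two integrations by parts give $\tfrac{d}{dh}\altenergy[\SG^h u]\big|_{h=0} = -\int_\Omega|\Delta u|^2\,\d x - \int_\Omega G''(u)|\diff u|^2\,\d x$. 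By approximating $\sol_\tau^n$ along the heat flow itself and exploiting the lower semicontinuity of this dissipation functional (which forces $\sol_\tau^n\in H^2(\Omega)$ and $\nml\cdot\diff \sol_\tau^n=0$ a posteriori, consistently with the Euler-Lagrange characterization of \eqref{eq.mm}), the same upper bound persists for general minimizers. One further integration by parts recasts the $G''$ term as
\begin{equation*}
  -\int_\Omega G''(\sol_\tau^n)|\diff\sol_\tau^n|^2\,\d x = \int_\Omega G'(\sol_\tau^n)\,\Delta \sol_\tau^n\,\d x \le \tfrac12\|\Delta\sol_\tau^n\|_{L^2}^2 + \tfrac12\|G'(\sol_\tau^n)\|_{L^2}^2,
\end{equation*}
so after absorbing the first summand on the left, $\|\Delta\sol_\tau^n\|^2$ is controlled by $\|G'(\sol_\tau^n)\|_{L^2}^2$ plus the $\altentropy$-increment. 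The last norm is in turn estimated in terms of $\altenergy[\sol_\tau^n]+\energyo$ via \eqref{eq.boundbelow}, the decomposition $G=G_\conv+G_\conc$ from \eqref{eq:21}, the growth hypothesis on $P$ in \eqref{Pre1}, and the Sobolev embedding (where the restriction $q<2d/(d-4)$ for $d>4$ enters when $M=\infty$). Telescoping for $n=1,\dots,N$ and invoking \eqref{eq.entropyest} to bound the initial entropy then yields \eqref{eq.h2}; the conversion from $\|\Delta u\|_{L^2}$ to $\|\diff^2 u\|_{L^2}$ uses the Grisvard-type identity $\int_\Omega|\diff^2 u|^2\,\d x \le \int_\Omega|\Delta u|^2\,\d x$ valid on convex domains for functions with Neumann data.

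Weak convergence \eqref{eq.weakconv} is immediate from the uniform $L^2(0,T;H^2)$ bound. For strong convergence \eqref{eq.strongconv} I would invoke an Aubin--Lions--Simon type compactness argument: the spatial regularity comes from \eqref{eq.h2}, while time equicontinuity is supplied by the $\tfrac12$-H\"older estimate \eqref{eq.estimate2} in $\wass$ together with the fact that $\wass$ metrizes the distributional convergence \eqref{eq:11} on $\altentropy$-sublevels (Proposition \ref{prp.wm}(a) and Remark \ref{rem:convergence}); the compact embedding $H^2(\Omega)\Subset H^1(\Omega)$ then extracts a subsequence converging strongly in $L^2(0,T;H^1(\Omega))$. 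The principal technical hurdle is the very first step, namely rigorously establishing \eqref{eq.eulerian} on the enlarged domains $\Omega_n$ and checking all the compatibility conditions of Definition \ref{def:mollification} (in particular stability \eqref{eq:20} along the approximate geodesics and the joint semicontinuity of $\altentropy_n\circ\SG_n^t$ for converging initial data); once this Eulerian computation is secured, the bookkeeping on the $G''$ term and the final compactness argument are fairly standard.
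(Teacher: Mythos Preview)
Your overall architecture is correct and matches the paper: one applies the flow interchange Lemma with $\auxil=\altentropy$, identifies the heat semigroup as a $0$-flow via the Eulerian estimate of Lemma~\ref{lem.eulerian}, and then reads off an $H^2$-dissipation from $\tfrac{\d}{\d s}\altenergy[\aux_s]$. The gap is in how you treat the term $-\int_\Omega G''(\aux_s)|\diff\aux_s|^2\,\d x$.

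Your integration by parts $-\int G''(\aux_s)|\diff\aux_s|^2=\int G'(\aux_s)\Delta\aux_s$ is valid for each fixed $s>0$ (where $0<\inf\aux_s\le\sup\aux_s<M$), but the subsequent bound $\|G'(\aux_s)\|_{L^2}^2\le C(\altenergy[\aux_s]+\energyo)$ does \emph{not} follow from \eqref{Pre1}. The hypothesis only controls $\mob\,G''$ from below; it gives no control on $G'$ itself, and $G'$ may blow up at $0$ and $M$. A concrete admissible example: for $M=1$, $\mob(s)=s(1-s)$ and $G(s)=-s\log s-(1-s)\log(1-s)$, one has $\mob G''\equiv -1$ and $P(s)=-s$, so \eqref{Pre1} holds, yet $G'(s)=-\log\tfrac{s}{1-s}$ is unbounded and $\|G'(u)\|_{L^2}$ cannot be estimated by $\altenergy[u]+\energyo$ (take $u$ close to $0$ on a set of positive measure with bounded Dirichlet energy). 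Neither \eqref{eq.boundbelow}, nor the decomposition \eqref{eq:21}, nor the growth condition on $P$ in \eqref{Pre1} (which concerns $P$, not $G'$) rescues this; the Sobolev exponent $q<2d/(d-4)$ appears in the paper only to control $P(u)$, not $G'(u)$. The paper avoids this problem entirely: it uses the hypothesis in the form $-G''\le C(1+1/\mob)$ directly, so that
\[
-\int_\Omega G''(\aux_s)|\diff\aux_s|^2\,\d x\le C\int_\Omega\Big(1+\frac{1}{\mob(\aux_s)}\Big)|\diff\aux_s|^2\,\d x,
\]
and then absorbs the singular part through the concavity bound \eqref{eq.mbelow} and the Lions--Villani estimate $\|\diff\sqrt{\aux_s}\|_{L^4}^2\le C\|\Delta\aux_s\|_{L^2}$ of Lemma~\ref{lem.Sobolev} (and its companion for $\sqrt{M-\aux_s}$ when $M<\infty$), which is tailored to exactly this $1/\aux_s$-type singularity. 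This step, not the mollification machinery of Definition~\ref{def:mollification}, is the genuine technical hurdle.

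A minor point on compactness: your Aubin--Lions--Simon route would require adapting the time-equicontinuity hypothesis to the non-Hilbertian metric $\wass$. The paper proceeds more directly: Proposition~\ref{prp.discrete} already gives pointwise-in-$t$ weak $H^1(\Omega)$-convergence, hence pointwise strong $L^2(\Omega)$-convergence by Rellich, hence strong $L^2(0,T;L^2(\Omega))$-convergence by dominated convergence; interpolation with the uniform $L^2(0,T;H^2)$-bound then yields strong $L^2(0,T;H^1)$-convergence.
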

The proof of Proposition \ref{prp.strong} rests on the fact that the densities $\sol_\tau$
are $\tau$-uniformly bounded in $L^2(0,T;H^2(\Omega))$ for arbitrary $T>0$.
As motivation for the \GGG arguments \EEE below,
we provide the relevant formal calculations in the case $G\equiv0$:
assuming that $\sol$ is a smooth solution to \eqref{eq.formalflow} satisfying \eqref{eq.bc},
differentiation of the entropy functional $\altentropy[\sol(t)]$ introduced in \eqref{eq.altentropy}
yields
\begin{align*}
  \frac{\d}{\d t}\altentropy[\sol(t)]
  &= \int_\Omega U'(\sol(t))\partial_t\sol(t)\d x
  = -\int_\Omega U'(\sol(t)) \dv\big( \Mob(\sol(t)) \diff\Delta\sol(t)\big) \d x \\
  &= \int_\Omega U''(\sol(t))   \Mob(\sol(t)) \diff\sol(t)\cdot \diff\Delta\sol(t) \d x
  = \int_\Omega \diff\sol(t)\cdot \diff\Delta\sol(t)\d x\\
  &= - \int_\Omega \big(\Delta\sol(t)\big)^2\d x
\end{align*}
because of the identity $U''(s)=1/\mob(s)$.
Consequently, $\altentropy[\sol(t)]$ is decreasing with respect to $t$,
and (still formally),
\begin{align*}
  \altentropy[\sol(T)] + \int_0^T\int_\Omega (\Delta\sol)^2 \d x\,\d t \leq \altentropy[\sol^0] .
\end{align*}
\GGG
Taking also into account the contribution of $G$ and the convexity of
$\Omega$, \EEE
one ends up with an estimate of the form \eqref{eq.h2}.
The goal for the rest of this section is the rigorous proof of this estimate.



We wish to apply the {flow interchange Lemma} \ref{lem.flowinterchange} with $\auxil=\altentropy$.
To this end, we need to identify the associated semi-group $\SG$,
with $\SG^t\aux$ given by the smooth solution $\aux_t$ to the Neumann problem
\begin{align}
  \label{eq.heat}
  \partial_t \aux_t = \Delta\aux_t \quad \mbox{in $\Omega$}, \qquad
  \nml\cdot\diff\aux_t = 0 \quad \mbox{on $\partial\Omega$}, \qquad
  \aux_0=\aux.
\end{align}
\begin{lemma}
  \label{lem.heatconvex}
  The semi-group $\SG$ induced by solutions $\aux_t$ of the problem \eqref{eq.heat} on $\regdens$
  extends to a $0$-flow $\SG$ for $\entropy$. \EEE
\end{lemma}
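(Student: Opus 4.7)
The plan is to apply the Eulerian calculus Lemma \ref{lem.eulerian} with the natural choice $\auxil := \entropy$, $\SG$ the Neumann heat semigroup on $\Omega$, and $\lambda = 0$. Given a vanishing sequence $\delta_n\downarrow 0$, let $\Omega_n\supset \neigh{\delta_n}$ be smooth convex sets decreasing to $\Omega$ provided by Proposition \ref{prp.wm}(e); set $\auxil_n[\rho] := \int_{\Omega_n} U(\rho)\,\d x$ on $\regdensx{\Omega_n}$ and let $\SG_n$ be the Neumann heat semigroup on $\Omega_n$.

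Standard parabolic regularity for \eqref{eq.heat} ensures that $\SG_n$ preserves $\regdensx{\Omega_n}$: smoothness is immediate, the maximum principle preserves the strict bounds $0<\inf u\le u\le \sup u<M$, and mass conservation follows from integrating against constants and using the Neumann condition. The regularity conditions \eqref{eq:16}--\eqref{eq:18} are then clear. Monotonicity of $\auxil_n$ along $\SG_n$ is the standard entropy dissipation,
\begin{align*}
  \frac{\d}{\d h}\auxil_n[\SG_n^h u] = \int_{\Omega_n} U'(\SG_n^h u)\,\Delta\SG_n^h u\,\d x = -\int_{\Omega_n} \frac{|\diff \SG_n^h u|^2}{\mob(\SG_n^h u)}\,\d x \le 0,
\end{align*}
using the Neumann condition and $U'' = 1/\mob$; in particular $\auxil_n\ge 0$ on $\regdensx{\Omega_n}$, so the uniform lower bound required by Lemma \ref{lem.eulerian} holds.

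The main step is the Eulerian inequality \eqref{eq.eulerian}. Fix a regular curve $\curv_n:[0,1]\to\regdensx{\Omega_n}$; set $u := \curv_n^h(s)$ and $\pot := \pot_n^h(s)$, so that $\partial_h u = s\Delta u$, $\nml\cdot\diff\pot = 0$ on $\partial\Omega_n$, and $-\dv(\mob(u)\diff\pot) = \partial_s u$ by \eqref{eq.Neumann2}. Integration by parts and the identity $U''\mob \equiv 1$ give
\begin{align*}
  \partial_s \auxil_n[\curv_n^h(s)] = \int_{\Omega_n} U'(u)\,\partial_s u\,\d x = \int_{\Omega_n} U''(u)\,\mob(u)\,\diff u\cdot\diff\pot\,\d x = \int_{\Omega_n} \diff u \cdot \diff \pot\,\d x.
\end{align*}
Differentiating the action $\act{h}{n}(s) = \int_{\Omega_n}\mob(u)|\diff\pot|^2\,\d x$ in $h$, using $\partial_h u = s\Delta u$, and carrying out the classical Eulerian computation of \cite{OttoWest,DaneSava}, one obtains
\begin{align*}
  \frac12\partial_h \act{h}{n}(s) = -\int_{\Omega_n} \diff u\cdot\diff\pot\,\d x - s\int_{\Omega_n}\|\Hess\pot\|^2\,\mob(u)\,\d x - s\int_{\partial\Omega_n} \mathrm{II}(\diff\pot,\diff\pot)\,\mob(u)\,\d\mathcal{H}^{d-1},
\end{align*}
where $\mathrm{II}$ denotes the second fundamental form of $\partial\Omega_n$. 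Convexity of $\Omega_n$ makes $\mathrm{II}\ge 0$, so the boundary integral is non-negative, and the bulk Hessian term is manifestly non-negative. Adding the two displayed identities yields
\begin{align*}
  \frac12\partial_h \act{h}{n}(s) + \partial_s \auxil_n[\curv_n^h(s)] \le 0,
\end{align*}
which is precisely \eqref{eq.eulerian} with $\lambda = 0$. This is the identity that makes the specific choice $U'' = 1/\mob$ critical: the cross term from entropy variation and action variation cancels exactly.

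Finally, the mollification requirements of Definition \ref{def:mollification} are verified as follows: (a) and (b) are clear from smoothness; (c) follows from \eqref{eq:15} applied to the convex integrand $F = U$ on the approximate geodesics $\gamma_n(i)$; and (d) follows from standard parabolic estimates on $\SG_n^t u_n$ giving weak-$*$ compactness to identify the limit with $\SG^t u$, together with lower semicontinuity of $\entropy$ (Lemma \ref{lem.entropy}). Invoking Lemma \ref{lem.eulerian} concludes that $\SG$ extends to a $0$-flow for $\entropy$. The main technical obstacle is the pointwise Eulerian computation for $\partial_h \act{h}{n}(s)$: it requires careful bookkeeping of several integrations by parts, and produces a boundary term whose sign is controlled only by the convexity of $\Omega_n$ -- which is precisely why the convexity hypothesis on $\Omega$ enters essentially throughout the paper.
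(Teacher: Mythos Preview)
Your overall strategy is exactly the paper's: apply Lemma~\ref{lem.eulerian} with $\auxil=\entropy$, the Neumann heat semigroup, and $\lambda=0$, and verify the Eulerian inequality \eqref{eq.eulerian}. The setup of the mollifications and your computation of $\partial_s\auxil_n[\curv_n^h(s)]=\int_{\Omega_n}\diff u\cdot\diff\pot\,\d x$ are correct.

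The gap is in your displayed formula for $\tfrac12\partial_h\act{h}{n}(s)$, which you state as an \emph{equality}. It is missing the term
\[
\frac{s}{2}\int_{\Omega_n}\mob''(u)\,|\diff u|^2\,|\diff\pot|^2\,\d x .
\]
This term necessarily appears when you differentiate the weight $\mob(u)$ in the action and integrate $\mob'(u)\,\partial_h u\,|\diff\pot|^2=s\,\mob'(u)\,\Delta u\,|\diff\pot|^2$ by parts. The ``classical Eulerian computation'' you cite from \cite{OttoWest,DaneSava} is carried out for the linear mobility $\mob(s)=s$, where $\mob''\equiv0$ and this contribution vanishes; for a genuinely nonlinear $\mob$ it does not. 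The paper performs this computation in full and arrives at \eqref{eq.entropyalonggeodesic}, where precisely this extra term is displayed. It has the correct sign only because $\mob$ is \emph{concave} --- a hypothesis you never invoke, which is the tell that something has been dropped. Once the term is restored, the desired inequality $\tfrac12\partial_h\act{h}{n}(s)+\partial_s\auxil_n\le0$ follows, but only after using $\mob''\le0$; your claimed equality is false as written.
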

This fact is a special case of a more general result proven in \cite[Theorem 6.1]{CarrLisiSavaSlep}.
We provide the relevant calculations for the specific situation of Lemma \ref{lem.heatconvex} as we shall refer to it later.
\begin{proof}
  We wish to apply Lemma \ref{lem.eulerian}.
  In order to define a family of mollifiers $\{\auxil_n,\SG_n\}_{n\in \N}$ for $\entropy$, $\SG$,
  we consider a sequence of domains $\Omega_n:=\neigh {\delta_n}$ for some vanishing sequence $\delta_n>0$
    and we define $\auxil_n(\sol):=\entropy_n[\sol]:=\int_{\Omega_n}U(\sol(x))\d x$ and the heat-semigroup $\SG_n$ on
    $\Omega_n$ with homogeneous Neumann boundary conditions. \EEE
  The contraction properties of the heat flow imply that convergence of the initial conditions in $L^1$
  imply the same convergence of the solution at any time $h>0$
  \GGG and it is easy to verify all properties required in Definition \ref{def:mollification}. \EEE

  \GGG Let regular curves $\curv_n:[0,1]\to\regdensx{\Omega_n}$ be given.
  By classical parabolic theory, the $h$-perturbed curves $\curv^h_n:[0,1]\to\regdensx{\Omega_n}$ are well-defined for any $h\ge0$,
  and for every $s\in[0,1]$, the function $(h,x)\mapsto\curv^h_n(s,x)$ is a classical solution to
  \begin{align}
    \label{eq.perturbedgeodesic}
    \partial_h \curv^h_n = s\Delta\curv^h_n \quad \text{in $\Omega_n$}, \qquad
    \diff\curv^h_n\cdot\nml = 0 \quad \text{on $\partial\Omega_n$}, \qquad
    \curv^0_n(s)=\curv_n(s).
  \end{align}
  \EEE
  We need to verify the principal estimate \eqref{eq.eulerian} of Eulerian calculus,
  which reads in the situation at hand \GGG (we will omit to indicate $n$
  in the following) \EEE
  as follows:
  \begin{align}
    \label{eq.eulerentropy}
    \frac12 \int_\Omega \partial_h \big[\mob(\curv^h)|\diff\pot^h|^2\big]\,\d x
    \le - \int_\Omega \partial_s \big[ U(\curv^h)\big] \,\d x.
  \end{align}
  Using the definition of $\pot^h$ in \eqref{eq.Neumann2} and its boundary conditions,
  the right-hand side evaluates after integration by parts to
  \begin{align*}
    - \int_\Omega U'(\curv^h)\partial_s\curv^h\,\d x
    = - \int_\Omega U''(\curv^h)\diff\curv^h\cdot\big(\mob(\curv^h)\diff\pot^h\big)\,\d x
    = - \int_\Omega \diff\curv^h\cdot\diff\pot^h\,\d x
    = \int_\Omega \curv^h\Delta\pot^h\,\d x
  \end{align*}
  since $U''(s)\mob(s)=1$ by definition of $U$.
  For the $h$-derivative of the action, we find
  \begin{equation}
    \label{eq.actionderivative}
    \begin{aligned}
      \frac12 \int_\Omega& \partial_h \big[\mob(\curv^h)|\diff\pot^h|^2\big]\,\d x
      = \frac12 \int_\Omega \partial_h\mob(\curv^h)|\diff\pot^h|^2\,\d x
      + \int_\Omega \mob(\curv^h)\partial_h\diff\pot^h\cdot\diff\pot^h\,\d x \\
      &= -\frac12 \int_\Omega \partial_h\mob(\curv^h)|\diff\pot^h|^2\,\d x
      + \int_\Omega \big(\partial_h\mob(\curv^h)\diff\pot^h + \mob(\curv^h)\partial_h\diff\pot^h\big)\cdot\diff\pot^h\,\d x .
    \end{aligned}
  \end{equation}
  To simplify the second integral above,
  first observe that for every smooth function $\theta\in C^\infty(\overline\Omega)$
  it follows from \eqref{eq.Neumann2} that
  \begin{align}
    \label{eq.Neumann2weak}
    \int_\Omega \mob(\curv^h)\diff\pot^h\cdot\diff\theta\,\d x = \int_\Omega \partial_s\curv^h\theta\,\d x.
  \end{align}
  Taking the $h$-derivative yields
  \begin{align}
    \label{eq.dhds}
    \int_\Omega \big(\partial_h\mob(\curv^h)\diff\theta + \mob(\curv^h)\partial_h\diff\pot^h\big)\cdot\diff\theta\,\d x
    = \int_\Omega \partial_h\partial_s\curv^h\theta\,\d x.
  \end{align}
 If $\theta$ satisfies homogeneous Neumann boundary conditions,
  $\nml\cdot\diff\theta=0$ on $\partial\Omega$, \EEE
  we obtain from \eqref{eq.perturbedgeodesic}
  \begin{align}
    \label{eq.heatweak}
    \int_\Omega \partial_h\curv^h\vartheta\,\d x
    = s \int_\Omega \curv^h\Delta\vartheta\,\d x,
  \end{align}
  and the $s$-derivative amounts, in view of \eqref{eq.Neumann2weak}, to
  \begin{align}
    \label{eq.dsdh}
    \int_\Omega \partial_s\partial_h\curv^h\vartheta\,\d x
    = \int_\Omega \curv^h\Delta\vartheta\,\d x + s\int_\Omega \mob(\curv^h)\diff\pot^h\cdot\diff\Delta\vartheta\,\d x,
  \end{align}
  and thus allows to express the mixed derivative $\partial_h\partial_s\curv^h$ in \eqref{eq.dhds}.
  Using as test function $\theta=\pot^h$ \EEE
  in \eqref{eq.dhds} and \eqref{eq.dsdh},
  the integrals in \eqref{eq.actionderivative} become
  \begin{equation}\label{a}
  \begin{aligned}
    \frac12 \int_\Omega \partial_h \big[\mob(\curv^h)|\diff\pot^h|^2\big]\,\d x
    &= -\frac12 \int_\Omega \partial_h\mob(\curv^h)|\diff\pot^h|^2\,\d x \\
    &+ \int_\Omega \curv^h\Delta\pot^h\,\d x + s\int_\Omega \mob(\curv^h)\diff\pot^h\cdot\diff\Delta\pot^h\,\d x.
  \end{aligned}
  \end{equation}
  We evaluate the first integral on the right-hand side,
  \begin{align*}
    -&\frac12 \int_\Omega \partial_h\mob(\curv^h)|\diff\pot^h|^2\,\d x
    = -\frac12 \int_\Omega\mob'(\curv^h)s\Delta\curv^h|\diff\pot^h|^2\,\d x \\
    &=  \frac{s}2 \int_\Omega\diff(\mob'(\curv^h)|\diff\pot^h|^2)\cdot\diff\curv^h\,\d x\\
    &= \frac{s}2\int_\Omega\diff\big(\mob'(\curv^h)\Big)\cdot\diff\curv^h|\diff\pot^h|^2\,\d x
    + \frac{s}2\int_\Omega\mob'(\curv^h)\diff\curv^h\cdot\diff\big(|\diff\pot^h|^2\big)\,\d x\\
    & = \frac{s}2\int_\Omega\mob''(\curv^h)|\diff\curv^h|^2|\diff\pot^h|^2\,\d x
    - \frac{s}2\int_\Omega \mob(\curv^h)\Delta\big(|\diff\pot^h|^2\big)\,\d x \\
    & \qquad +\frac{s}2\int_{\partial\Omega} \mob(\curv^h)\diff\big(|\diff\pot^h|^2\big)\cdot\nml\,\d\HH^{d-1}.
  \end{align*}
  Using the last identity in \eqref{a}, taking into account the Bochner formula
  \begin{align*}
    - \frac12\Delta(|\diff\pot^h|^2) + \diff\Delta\pot^h\cdot\diff\pot^h = - \|\diff^2\pot^h\|^2 \le 0,
  \end{align*}
  and that $\diff\big(|\diff\pot^h|^2\big)\cdot\nml \leq 0$ on $\partial\Omega$ since $\Omega$ is convex,
  see \eqref{inconv},
  we find that
  \begin{align}
    \label{eq.entropyalonggeodesic}
       \frac12 \int_\Omega \partial_h \big[\mob(\curv^h)|\diff\pot^h|^2\big]\,\d x
       \leq - \int_\Omega \partial_s \big[ U(\curv^h)\big] \,\d x
       + \frac{s}2\int_\Omega\mob''(\curv^h)|\diff\curv^h|^2|\diff\pot^h|^2\,\d x.
  \end{align}
  \EEE
  By concavity of $\mob$, this proves \eqref{eq.eulerentropy}.
\end{proof}
The following Lemma provides the last missing piece for proving \eqref{eq.h2}
by means of the flow interchange Lemma \ref{lem.flowinterchange},
namely the dissipation of the energy $\altenergy$ along the heat flow \eqref{eq.heat}.
\begin{lemma}
  \label{lem.heatestimate}
  Let $\aux:\hopen\to H^1(\Omega)$ be a solution to \eqref{eq.heat}.
  If
  \begin{equation}\label{finitedissipation}
    \liminf_{s\downarrow0}\frac1s\big(\altenergy[\aux_s]-\altenergy[\aux_0]\big)>-\infty,
  \end{equation}
  then $\aux_0\in H^2(\Omega)$ and
  \begin{align}
    \label{eq.heatestimate}
    -\liminf_{s\downarrow0}\frac1s\big(\altenergy[\aux_s]-\altenergy[\aux_0]\big)
    \geq \frac12 \int_\Omega \big(\Delta\aux_0\big)^2\,\d x - C\big( \gGG\energyo+\altenergy[\aux_0] \big),
  \end{align}
  where the constant $C$ depends only on $\Mob(s_0)$, $|\Omega|$ and
  \GGG $G$. \EEE
\end{lemma}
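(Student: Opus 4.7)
The strategy is to prove the pointwise-in-$\sigma$ dissipation estimate
\begin{equation*}
  -\frac{\d}{\d\sigma}\altenergy[\aux_\sigma]\ge\tfrac12\int_\Omega(\Delta\aux_\sigma)^2\,\d x - C\big(\energyo+\altenergy[\aux_\sigma]\big)\qquad\foralltext\sigma>0,
\end{equation*}
and then to integrate it on $(\delta,s)$, let $\delta\downarrow0$, divide by $s$, and pass to $\liminf_{s\downarrow 0}$.

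For $\sigma>0$, parabolic smoothing yields $\aux_\sigma\in C^\infty(\overline\Omega)$ with $\nml\cdot\diff\aux_\sigma=0$ on $\partial\Omega$, so a straightforward computation after integration by parts gives
\begin{equation*}
  -\frac{\d}{\d\sigma}\altenergy[\aux_\sigma] = \int_\Omega(\Delta\aux_\sigma)^2\,\d x + \int_\Omega G''(\aux_\sigma)|\diff\aux_\sigma|^2\,\d x.
\end{equation*}
Decomposing $G''=L^+-L^-$ as in \eqref{eq:25}, the $L^+$-contribution is non-negative and may be discarded. Hypothesis \eqref{Pre1} together with $U''=1/\Mob$ gives $L^-(s)\le C(1+U''(s))$ uniformly on $(0,M)$ in both cases $M<\infty$ and $M=\infty$, so
\begin{equation*}
  \int_\Omega L^-(\aux_\sigma)|\diff\aux_\sigma|^2\,\d x\le C\int_\Omega|\diff\aux_\sigma|^2\,\d x+C\int_\Omega U''(\aux_\sigma)|\diff\aux_\sigma|^2\,\d x,
\end{equation*}
and the first summand is controlled by $C'(\altenergy[\aux_\sigma]+\energyo)$ via \eqref{eq.boundbelow}.

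The critical step is to absorb the second summand into $\tfrac12\int(\Delta\aux_\sigma)^2$. Since $\Mob$ is concave and vanishes at the endpoints of its domain, \eqref{eq.mbelow} gives $U''(s)=1/\Mob(s)\lesssim 1/s+1/(M-s)+1$, so that $\int U''(\aux_\sigma)|\diff\aux_\sigma|^2$ is dominated by a linear combination of $\int|\diff\sqrt{\aux_\sigma}|^2$, $\int|\diff\sqrt{M-\aux_\sigma}|^2$ (only if $M<\infty$) and $\int|\diff\aux_\sigma|^2$. The Lions--Villani square-root bound (Lemma \ref{lem.Sobolev}) combined with H\"older's inequality and the convex-domain $H^2$-regularity estimate $\|\diff^2\aux_\sigma\|_{L^2}\le\|\Delta\aux_\sigma\|_{L^2}$ (whose boundary term in the underlying Bochner identity has favorable sign by convexity of $\Omega$) yields $\int|\diff\sqrt{\aux_\sigma}|^2\le C\|\Delta\aux_\sigma\|_{L^2}$, and likewise for the other square root. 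A Young inequality then absorbs $C\|\Delta\aux_\sigma\|_{L^2}$ into $\tfrac12\|\Delta\aux_\sigma\|_{L^2}^2$ plus a constant multiple of $\altenergy[\aux_\sigma]+\energyo$, producing the desired pointwise estimate.

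It remains to integrate on $(\delta,s)$ and pass to the limit. Lower semi-continuity of $\altenergy$ combined with the finite-dissipation hypothesis \eqref{finitedissipation} forces $\altenergy[\aux_\delta]\to\altenergy[\aux_0]$ as $\delta\downarrow 0$, while a Gr\"onwall argument on $\altenergy+C\altentropy$ (using Lemma \ref{lem.heatconvex} to guarantee that $\altentropy$ is non-increasing along the heat flow, and \eqref{eq.entropyest} to control $\altentropy[\aux_0]$) gives $\altenergy[\aux_\sigma]\le C''(\altenergy[\aux_0]+\energyo)$ uniformly for small $\sigma$, which absorbs the error term. Finally, dividing by $s$ and taking $\liminf_{s\downarrow 0}$, weak $L^2$-compactness of $\Delta\aux_{\sigma_k}$ along a suitable sequence $\sigma_k\downarrow 0$ simultaneously delivers $\aux_0\in H^2(\Omega)$ and $\liminf_{s\downarrow 0}\tfrac{1}{s}\int_0^s\int(\Delta\aux_\sigma)^2\,\d\sigma\ge\int(\Delta\aux_0)^2$ by weak lower semi-continuity. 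The hard part will be the absorption step of the third paragraph: deriving the Lions--Villani-based bound uniformly for a general concave $\Mob$ that may degenerate at one or both endpoints of its domain, and packaging the convex-domain elliptic estimate so that the boundary contributions cancel or have the right sign.
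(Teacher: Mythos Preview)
Your overall strategy --- differentiate $\altenergy[\aux_\sigma]$, control the bad part of $G''$ via \eqref{Pre1} and \eqref{eq.mbelow}, absorb $\int|\diff\sqrt{\aux_\sigma}|^2$ into $\tfrac12\|\Delta\aux_\sigma\|_{L^2}^2$ using Lions--Villani, and finish with weak $L^2$-compactness of $\Delta\aux_\sigma$ --- matches the paper. Two points deserve correction.

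\textbf{The limiting step has a genuine gap.} You need $\limsup_{\delta\downarrow0}\altenergy[\aux_\delta]\le\altenergy[\aux_0]$ in order to pass from the inequality integrated on $(\delta,s)$ to one on $(0,s)$. Lower semi-continuity gives only $\liminf_\delta\altenergy[\aux_\delta]\ge\altenergy[\aux_0]$, which is the wrong direction, and the finite-dissipation hypothesis \eqref{finitedissipation} says nothing about $\limsup_\delta\altenergy[\aux_\delta]$ either. The paper establishes full continuity of $s\mapsto\altenergy[\aux_s]$ at $s=0$ by a direct argument: split $G=G_\conv+G_\conc$ as in \eqref{eq:21}; for the convex part, $\int G_\conv(\aux_s)$ is non-increasing along the heat flow (Jensen), and Fatou then upgrades this to a limit; for the concave part, the quadratic lower bound \eqref{eq:24} plus $H^1$-continuity of $s\mapsto\aux_s$ and dominated convergence suffice. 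Once continuity at $0$ is in hand, the paper applies the mean value theorem to pick a single $\theta(s)\in(0,s)$, avoiding time integration altogether.

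\textbf{The Gr\"onwall detour is unnecessary.} To bound the error term uniformly in $\sigma$, the paper simply uses that the Dirichlet energy decreases along the heat flow, $\int|\diff\aux_\sigma|^2\le\int|\diff\aux_0|^2$, so the remainder is controlled directly by $C(1+\|\aux_0\|_{H^1}^2)$ and then by $C(\energyo+\altenergy[\aux_0])$ via \eqref{eq.boundbelow}. There is no need to close an estimate on $\altenergy[\aux_\sigma]$ first, and the appearance of $\altentropy$ in your argument is superfluous.
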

\begin{proof}
  By classical parabolic theory,
  the solution $\aux$ to \eqref{eq.heat} is smooth, 
  and for every $0<s_0<s_1<\infty$ it satisfies $0<\inf_{x\in\Omega }\aux_s(x)\le \sup_{x\in\Omega} \aux_s(x)<M$ for
  $(x,s)\in\Omega\times(s_0,s_1)$. \EEE
  Thus $\altenergy[\aux_s]$ is continuously differentiable with
  \GGG respect to $s$ in $[s_0,s_1]$ \EEE
  with
  \begin{align}\label{ab1}
    \frac{\d}{\d s}\altenergy[\aux_s]
    = \int_\Omega \diff\aux_s\cdot\diff\Delta\aux_s\,\d x + \int_\Omega G'(\aux_s)\Delta\aux_s\,\d x
    = - \int_\Omega (\Delta\aux_s)^2\,dx - \int_\Omega G''(\aux_s)|\diff\aux_s|^2\,\d x,
  \end{align}
  where the last equality follows after integration by parts,
  using that the boundary condition $\nml\cdot\diff\aux_s=0$ is satisfied for any $s>0$.
  Taking into account
  \eqref{Pre1}
  \EEE
  the second integral can be estimated as follows,
  \begin{align*}
    - \int_\Omega G''(\aux_s)|\diff\aux_s|^2\,\d x
    \leq - \int_\Omega G_1''(\aux_s)|\diff\aux_s|^2\,\d x
    \leq C \Big( \int_\Omega |\diff\aux_s|^2\,\d x + \int_\Omega \frac{|\diff\aux_s|^2}{\mob(\aux_s)}\,\d x \Big).
  \end{align*}
  Recall \eqref{eq.mbelow} and the identity
  \begin{align*}
    \frac{|\diff\aux_s|^2}{\aux_s} = 4 \big| \diff\sqrt{\aux_s} \big|^2.
  \end{align*}
  In case that $M=+\infty$, we obtain
  \begin{align*}
    \int_\Omega \frac{|\diff\aux_s|^2}{\mob(\aux_s)}\,\d x
    &\leq \frac{s_0}{\mob(s_0)} \int_\Omega \frac{|\diff\aux_s|^2}{\aux_s}\,\d x
    + \frac{1}{\mob(s_0)} \int_\Omega |\diff \aux_s|^2\,\d x \\
    & \leq C\Big( \int_\Omega |\diff\sqrt{\aux_s}|^2\,\d x + \int_\Omega |\diff \aux_s|^2\,\d x \Big).
  \end{align*}
  Moreover, by H\"older's inequality and estimate \eqref{eq.villani} from the Appendix,
  \begin{align*}
    \int_\Omega |\diff\sqrt{\aux_s}|^2\,\d x
    \leq \Big( |\Omega| \int_\Omega |\diff\sqrt{\aux_s}|^4\,\d x \Big)^{1/2}
    \leq \Big( \frac{(d+8)|\Omega|}{16} \int_\Omega (\Delta\aux_s)^2\,\d x \Big)^{1/2} \leq \eps \int_\Omega (\Delta\aux_s)^2\,\d x + K_\eps .
  \end{align*}
  with $K_\eps=(d+8)|\Omega|/(64\eps)$.
  And analogously, if $M<\infty$,
  \begin{align*}
    \int_\Omega \frac{|\diff\aux_s|^2}{\mob(\aux_s)}\,\d x
    &\leq \frac{s_0}{\mob(s_0)} \int_\Omega \frac{|\diff\aux_s|^2}{\aux_s}\,\d x
    + \frac{M-s_0}{\mob(s_0)} \int_\Omega \frac{|\diff(M-\aux_s)|^2}{M-\aux_s}\,\d x \\
    &\leq C\Big( \int_\Omega |\diff\sqrt{\aux_s}|^2\,\d x + \int_\Omega |\diff\sqrt{M-\aux_s}|^2\,\d x \Big),
  \end{align*}
  where we use that
  \begin{align*}
    \int_\Omega |\diff\sqrt{M-\aux_s}|^2\,\d x
    \leq \eps \int_\Omega (\Delta\aux_s)^2\,\d x + K_\eps .
  \end{align*}
 Choosing $\eps$ above sufficiently small,
  and observing that
  \begin{align*}
    \int_\Omega |\diff\aux_s|^2\,\d x \leq \int_\Omega |\diff\aux_0|^2\,\d x,
  \end{align*}
  it thus can be achieved that
  \begin{align*}
    \frac{\d}{\d s}\altenergy[\aux_s] \leq - \frac12 \int_\Omega (\Delta\aux_s)^2\,\d x + C\big( 1 + \|\aux_0\|_{H^1(\Omega)}^2 \big),
  \end{align*}
  for a suitable constant $C$.

  \GGG Recall that the curve $s\mapsto \aux_s$ is continuous in $H^1(\Omega)$ 
  and that $G$ can be decomposed as in \eqref{eq:21}.
  The continuity of $G_\conc$ and the lower bound \eqref{eq:24} yield that
  \begin{displaymath}
    \lim_{s\downarrow0}\int_\Omega G_\conc(v_s)\,\d x=\int_\Omega G_\conc(v_0)\,\d x;
  \end{displaymath}
  on the other hand, since $G_\conv$ is convex, we have
  \begin{displaymath}
    \int_\Omega G_\conc(v_s)\,\d x\le \int_\Omega G_\conv(v_0)\,\d x
    \quad\foralltext s>0
  \end{displaymath}
  so that Fatou's Lemma and the continuity of $G$ yields
  \begin{displaymath}
    \lim_{s\downarrow0}\int_\Omega G_\conv(v_s)\,\d x=\int_\Omega G_\conv(v_0)\,\d x.
  \end{displaymath}
  \EEE
  Consequently, the function $s\mapsto \energy[\nu_s]$ is continuous at $s=0$
  and we have that
  \begin{align*}
    \frac1s \big( \altenergy[\aux_s] - \energy[\aux_0] \big)
    & \leq - \frac12 \frac1s \int_\Omega (\Delta\aux_{\theta(s)})^2\,\d x + C\big( 1 + \|\aux_0\|_{H^1(\Omega)}^2 \big),
  \end{align*}
  with $0<\theta(s)<s$.
  By \eqref{finitedissipation} it follows that
  the family $\{\Delta\aux_{\theta(s)}\}_{s\in(0,s_0)}$ for $s_0>0$ is weakly compact in $L^2(\Omega)$.
  Since $v_s$ converges to $v_0$ strongly in $H^1(\Omega)$ as $s\downarrow 0$,
  we have that $v_0\in H^2(\Omega)$ and
  \begin{align*}
    - \liminf_{s\downarrow0} \frac1s \big( \altenergy[\aux_s] - \altenergy[\aux_0] \big)
    & \geq \frac12 \liminf_{s\downarrow0}  \int_\Omega (\Delta\aux_{\theta(s)})^2\,\d x - C\big( 1 + \|\aux_0\|_{H^1(\Omega)}^2 \big) \\
    & \geq \frac12 \int_\Omega (\Delta\aux_0)^2\,\d x - C\big( 1 + \|\aux_0\|_{H^1(\Omega)}^2 \big) .
  \end{align*}
  Another application of the estimate \eqref{eq.boundbelow} finally provides \eqref{eq.heatestimate}.
\end{proof}
\begin{proof}[Proof of Proposition \ref{prp.strong}]
    By Lemma \ref{lem.entropy} we can apply the flow interchange Lemma \ref{lem.flowinterchange}
    with $\auxil=\entropy$.
 By Lemma \ref{lem.heatestimate} applied to $\nu_0=\mu_\tau^n$ we have that
  $\sol_\tau^n$ lies in $H^2(\Omega)$, and
  by \eqref{eq.heatestimate} and \eqref{eq.flowinterchange}, for any $n\in\setN$, it follows that
  \begin{equation}
    \label{eq.nosum}
    \frac\tau2 \int_\Omega \big(\Delta\sol_\tau^n\big)^2\,\d x \leq
    \entropy[\mu_\tau^{n-1}] - \entropy[\mu_\tau^n] +
    C\big(\gGG\energyo
    +\energy[\mu_\tau^n]\big)\tau .
  \end{equation}
  Here the constant $C$ is the same as in \eqref{eq.heatestimate},
  and does not depend on $\tau$, on $n$ or on the solution $\mu_\tau$.

  \GGG
  Let $T>0$ and $\tau\in(0,1)$ be given,
  and define $N\in\setN$ such that $(N-1)\tau<T\leq N\tau$. In view of \eqref{eq.entropyest} and $\energy[\mu_\tau^n]\leq\energy[\mu_0]$,
  summing \eqref{eq.nosum} from $n=1$ to $n=N$,
  we find that the interpolating function $\bar\sol_\tau$ satisfies
  \begin{align*}
    \|\Delta\bar\sol_\tau \|^2_{L^2(0,T;L^2(\Omega))}
    \leq  \tau \sum_{n=1}^N \int_\Omega
    \big(\Delta\sol_\tau^n\big)^2\,\d x
    \leq 2CT(\gGG\energyo+\energy[\mu_0]),
  \end{align*}
  \EEE
  which is obviously independent of $\tau\in(0,1)$.
  Combining this with \eqref{eq.boundbelow} and applying again \eqref{eq.laplace},
  we conclude that $\bar\sol_\tau$ remains uniformly bounded in $L^2(0,T;H^2(\Omega))$ as $\tau\downarrow0$,
  for any $T>0$:
  \begin{align}
    \label{eq.h2tau}
    \int_0^T \|\bar\sol_\tau(t)\|_{H^2(\Omega)}^2\,\d t \leq CT(\GGG\energyo+\energy[\mu_0]) < +\infty .
  \end{align}
  By \eqref{eq.h2tau} we have that, up to subsequences, $\bar\sol_{\tau_n}$
  converge weakly to $\sol$  in $L^2(0,T;H^2(\Omega))$ for every
  $T>0$.
  \GGG Since we have already seen in Proposition \ref{prp.discrete}
  that $\bar\sol_\tau$ pointwise converge weakly in $H^1(\Omega)$ and
  thus strongly in $L^2(\Omega)$ by Rellich's Theorem, the dominated
  convergence theorem shows that $\bar\sol_\tau$ converges strongly in
  $L^2(0,T;L^2(\Omega))$.
  \EEE
  From here, strong convergence $\bar\sol_{\tau_n}\to\sol$ in $L^2(0,T;H^1(\Omega))$
  follows by standard interpolation between the uniform bound \eqref{eq.h2tau}.
\end{proof}
\begin{corollary}
  \label{cor.mobpressure}
  In the setting of \S \ref{subsec:basic} we have for all $T>0$
  \begin{align}
   \label{eq.pressurestrong}
    P(\sol_{\tau_n}) &\to P(\sol) \qquad \text{strongly in
      $L^1(0,T;L^1(\Omega))$},
    \intertext{and, if $\Mob$ is also Lipschitz (as for \eqref{MobL})}
    \label{eq.mobstrong}
    \mob(\bar\sol_{\tau_n}) &\to \mob(\sol) \qquad \text{strongly in $L^2(0,T;H^1(\Omega))$}.
  \end{align}
\end{corollary}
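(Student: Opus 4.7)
The strategy is to deduce both convergences from pointwise a.e.\ convergence $\bar\sol_{\tau_n}(t,x)\to\sol(t,x)$ on $(0,T)\times\Omega$, which follows from the strong $L^2(0,T;L^2(\Omega))$-convergence of Proposition~\ref{prp.strong} after passing to a further subsequence, combined with suitable equi-integrability or domination estimates.

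For \eqref{eq.pressurestrong}, the case $M<+\infty$ is immediate by Lebesgue's dominated convergence theorem, since $P$ is continuous and hence bounded on $[0,M]$. When $M=+\infty$, the growth condition in \eqref{Pre1} provides, for every $\eps>0$, a constant $C_\eps>0$ with
\begin{equation*}
|P(s)|\le\eps\big(s^q+|G(s)|\big)+C_\eps\qquad\foralltext s\ge 0.
\end{equation*}
Equi-integrability of $\{P(\bar\sol_{\tau_n})\}$ on $(0,T)\times\Omega$ will then follow from uniform boundedness of both $\int_0^T\!\int_\Omega|G(\bar\sol_{\tau_n})|\,\d x\,\d t$ and $\int_0^T\!\int_\Omega \bar\sol_{\tau_n}^q\,\d x\,\d t$. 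For the first, the convex part $G_\conv(\bar\sol_{\tau_n})$ is bounded in $L^\infty(0,T;L^1(\Omega))$ by the discrete energy inequality \eqref{eq.estimate1} and \eqref{eq.boundbelow}, while $|G_\conc|$ is controlled by \eqref{Pre1cons} together with the uniform bound $\bar\sol_{\tau_n}\in L^\infty(0,T;L^2(\Omega))$ (which again follows from \eqref{eq.boundbelow}). For the second, I would interpolate the uniform bounds $\bar\sol_{\tau_n}\in L^\infty(0,T;H^1(\Omega))\cap L^2(0,T;H^2(\Omega))$ with the Sobolev embedding $H^2(\Omega)\hookrightarrow L^{q_*}(\Omega)$; the strict inequality $q<q_*=2d/(d-4)$ in \eqref{Pre1} (when $d>4$) is exactly what makes this interpolation close. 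Vitali's theorem then yields \eqref{eq.pressurestrong}.

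For \eqref{eq.mobstrong} under the Lipschitz hypothesis \eqref{MobL}, the $L^2(0,T;L^2)$-part is immediate from $\|\mob(\bar\sol_{\tau_n})-\mob(\sol)\|_{L^2}\le\|\mob'\|_\infty\|\bar\sol_{\tau_n}-\sol\|_{L^2}$. For the gradient part I invoke the Stampacchia-type chain rule for Sobolev maps post-composed with Lipschitz functions, which yields $\diff\mob(u)=\mob'(u)\diff u$ a.e.\ on $\{0<u<M\}$ and $\diff\mob(u)=0$ a.e.\ on $\{u=0\}\cup\{u=M\}$ (using that $\diff u$ vanishes a.e.\ on level sets). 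Splitting
\begin{equation*}
\mob'(\bar\sol_{\tau_n})\diff\bar\sol_{\tau_n}-\mob'(\sol)\diff\sol
=\mob'(\bar\sol_{\tau_n})\big(\diff\bar\sol_{\tau_n}-\diff\sol\big)+\big(\mob'(\bar\sol_{\tau_n})-\mob'(\sol)\big)\diff\sol,
\end{equation*}
the first summand vanishes in $L^2(0,T;L^2)$ by boundedness of $\mob'$ and strong $L^2(0,T;H^1(\Omega))$-convergence of $\bar\sol_{\tau_n}$; the second tends to zero by dominated convergence, being dominated by $2\|\mob'\|_\infty|\diff\sol|\in L^2$ and tending to zero a.e.\ --- it vanishes a.e.\ on $\{\sol=0\}\cup\{\sol=M\}$ since $\diff\sol=0$ there, and on $\{0<\sol<M\}$ it tends to zero thanks to continuity of $\mob'$ on $(0,M)$ combined with the pointwise convergence of $\bar\sol_{\tau_n}$. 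The main technical obstacle is the equi-integrability step for the pressure in the $M=+\infty$ case, where the exponent constraint $q<2d/(d-4)$ must be carefully combined with the Sobolev/interpolation estimates; the mobility convergence, by contrast, follows almost mechanically from Lipschitz continuity and dominated convergence.
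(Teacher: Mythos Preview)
Your proposal is correct and follows essentially the same route as the paper: for \eqref{eq.pressurestrong} the paper also treats $M<\infty$ by boundedness of $P$ and, for $M=+\infty$, uses the growth condition in \eqref{Pre1} together with the uniform $L^\infty_tH^1_x\cap L^2_tH^2_x$ bounds (Sobolev embedding/interpolation) and the energy control of $\int|G(\bar\sol_\tau)|$ to get equi-integrability and apply Vitali; for \eqref{eq.mobstrong} the paper merely states that it is a standard consequence of $\mob$ being Lipschitz and $C^1$, which your chain-rule splitting makes explicit.
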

\begin{proof}
  \GGG
  \eqref{eq.pressurestrong} is trivial when $M<\infty$. When
  $M=+\infty$,
  by Sobolev imbedding Theorem and the uniform estimates \eqref{eq.h2}
  and \eqref{eq.estimate1} we know that
  \begin{equation}
    \label{eq:26}
    \int_0^T \int_\Omega\Big( (\bar \sol_\tau)^q+|\bar
    \sol_\tau|\Big)\,\d x\,\d t\le C_T
  \end{equation}
  uniformly with respect to $\tau$. Since $\bar u_\tau$ (up to
  subsequence) converges strongly to $u$ in $L^1((0,T)\times\Omega)$,
  we deduce the same property for $P(\bar u)$ thanks to \eqref{Pre1}.

  \eqref{eq.mobstrong} is a standard consequence of the fact that
  $\Mob$ is Lipschitz and $C^1$.
\end{proof}

%

\subsection{Weak formulation}

The remaining section is devoted to prove the following Proposition stating that
the time-continuous limit $\sol$ obtained before
is a weak solution in the sense of \eqref{eq.weak}.
\begin{proposition}
  \label{prp.weak}
  Under the assumptions of Theorem \ref{thm.main},
  let $V$ be a spatial test function satisfying
  \begin{equation}\label{hp:V}
    V \in C^\infty(\overline\Omega), \qquad
    \diff V\cdot\nml=0 \text{ on }\partial\Omega,
  \end{equation}
  and a temporal test function $\psi\in C^\infty_c(0,+\infty)$ be given.
  Then
  \begin{align}
    \label{eq.weaker}
    -\int_0^{+\infty} \psi'(t)\potential[\mu(t)]\,\d t =
    \int_0^{+\infty} \psi(t)\nonlin[\sol(t),V] \,\d t,
  \end{align}
  where the nonlinear functional $\nonlin$ is given by
  \begin{align*}
    \nonlin[\sol,V] :=  -\int_\Omega \Delta\sol\dv\big(\mob(\sol)\diff
    V\big)\,\d x + \int_\Omega P(\sol)\Delta V \,\d x .
  \end{align*}
\end{proposition}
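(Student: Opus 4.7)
The strategy is to apply the flow-interchange machinery with an auxiliary functional that encodes the spatial test function $V$. Concretely, for each $\eps>0$ introduce the perturbed potential
\begin{align*}
  \potentialeps[\dens] := \int_\Omega V(x)\,\dens(x)\,\d x + \eps\,\entropy[\dens],
\end{align*}
whose Wasserstein gradient flow $\SG_\eps$ is formally governed by the viscous conservation law
\begin{align*}
  \partial_h v = \dv(\mob(v)\diff V) + \eps\Delta v \quad\text{in $\Omega$},\qquad \diff v\cdot\nml=0 \quad\text{on $\partial\Omega$}.
\end{align*}
Thanks to the smoothness of $V$ (in particular the Neumann condition $\diff V\cdot\nml=0$) and the parabolic regularization $\eps\Delta v$, this PDE admits classical solutions starting from any density in $\regdens$, producing a regular semigroup that one may pair with suitable mollifications $\potentialeps^{(n)},\SG_\eps^{(n)}$ in the sense of Definition \ref{def:mollification}. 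Lower semi-continuity of $\potentialeps$ and $\Dom(\potentialeps)\supset\Dom(\energy)$ (via \eqref{eq.entropyest}) are immediate.

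The first main task is to verify that $\SG_\eps$ is a $\lambda_\eps$-flow for $\potentialeps$ by means of Lemma \ref{lem.eulerian}: I would compute $\partial_h \act{h}{n}(s)$ along an $h$-perturbation $\curv^h_n(s)=\SG_\eps^{(n),hs}\curv_n(s)$ of a regular curve $\curv_n$, mimicking the proof of Lemma \ref{lem.heatconvex}. The $\eps$-heat contribution reproduces the Bochner identity, the favorable boundary term from the convexity of $\Omega$, and the non-negative concavity term $-\mob''|\diff\curv^h_n|^2|\diff\pot^h_n|^2$. The drift $\dv(\mob\diff V)$ introduces new terms involving $\diff V$, $\diff^2V$, $\mob'(\curv^h_n)$ and $\mob''(\curv^h_n)$. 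Under the Lipschitz hypothesis \eqref{MobL}, the second-order cross terms can be absorbed into the coercive $\eps\int\mob|\diff^2\pot^h_n|^2$, while the lower-order ones contribute $-\lambda_\eps \act{h}{n}(s)$ with some finite $\lambda_\eps\in\R$ (possibly blowing up as $\eps\downarrow0$). This estimate, uniform in the mollification parameter, is the main technical obstacle.

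Granted the $\lambda_\eps$-flow property, apply the flow-interchange Lemma \ref{lem.flowinterchange} both to $\potentialeps$ and to $\potential_{-V,\eps}:=-\potential+\eps\entropy$, whose associated flow is obtained by reversing the sign of $V$. A direct integration by parts, using $\diff V\cdot\nml=0$ and the natural boundary condition $\diff\mu_\tau^n\cdot\nml=0$ inherited from the Euler--Lagrange equation of the JKO step, shows that
\begin{align*}
  \liminf_{h\downarrow 0}\frac{\energy[\SG_\eps^h(\mu_\tau^n)]-\energy[\mu_\tau^n]}{h} = \nonlin[\mu_\tau^n,V] + \eps\int_\Omega\bigl(-(\Delta\mu_\tau^n)^2 + G'(\mu_\tau^n)\Delta\mu_\tau^n\bigr)\d x,
\end{align*}
and symmetrically for $-V$. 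Combining the upper bound from $\potentialeps$ and the lower bound from $\potential_{-V,\eps}$ produces
\begin{align*}
  \bigl|\potential[\mu_\tau^n] - \potential[\mu_\tau^{n-1}] - \tau\,\nonlin[\mu_\tau^n,V]\bigr| \le C_\eps\,\wass(\mu_\tau^n,\mu_\tau^{n-1})^2 + \tau\eps\bigl(1+\|\mu_\tau^n\|_{H^2(\Omega)}^2\bigr) + \eps\bigl|\entropy[\mu_\tau^n]-\entropy[\mu_\tau^{n-1}]\bigr|.
\end{align*}

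To conclude, multiply the discrete estimate by $\psi(n\tau)$, sum over $n$, and perform Abel summation on both sides: the left-hand side becomes a Riemann-sum approximation of $-\int_0^\infty\psi'(t)\potential[\bar\sol_\tau(t)]\,\d t$, the leading right-hand term approximates $\int_0^\infty\psi(t)\nonlin[\bar\sol_\tau(t),V]\,\d t$, and the $\entropy$-error also telescopes into an $O(\eps)$ term using $\entropy\ge 0$ and the uniform bound $\entropy[\mu_\tau^n]\le C\energy[\mu_0]$ from \eqref{eq.entropyest}. The quadratic remainder $\sum_n|\psi(n\tau)|\wass^2$ is $O(\tau)$ by \eqref{eq.estimate1}, and the residual $\tau\eps\sum_n|\psi(n\tau)|(1+\|\mu_\tau^n\|_{H^2}^2)$ is $O(\eps)$ by \eqref{eq.h2}. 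Sending $\tau\downarrow 0$ at fixed $\eps$, the convergences of Proposition \ref{prp.strong} (strong in $L^2(H^1)$, weak in $L^2(H^2)$) and Corollary \ref{cor.mobpressure} (strong convergence of $\mob(\bar\sol_\tau)$ in $L^2(H^1)$ and of $P(\bar\sol_\tau)$ in $L^1$) identify the limit of $\nonlin[\bar\sol_\tau,V]$ with $\nonlin[\sol,V]$; a final passage $\eps\downarrow 0$ eliminates the residual and delivers \eqref{eq.weaker}.
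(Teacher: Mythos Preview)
Your plan is essentially the paper's own proof: the same regularized functional $\potentialeps=\potential+\eps\entropy$, the same viscous auxiliary flow, the same Eulerian-calculus verification of the $\lambda_\eps$-flow property, and the same combination of flow interchange with the dissipation of $\energy$ along $\SG_\eps$. Two organizational differences are harmless: the paper obtains only the one-sided inequality $-\int\psi'\potential\le\int\psi\nonlin$ and then swaps $V\mapsto-V$ at the very end, rather than running $\pm V$ in parallel at the discrete level; and it couples the limits via $\eps_n=\sqrt{\tau_n}$ (so that the remainder $K\tau/\eps\to0$) rather than sending $\tau\downarrow0$ first at fixed $\eps$ and then $\eps\downarrow0$ --- your decoupled limit also works.

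One technical correction to your sketch of the $\lambda_\eps$-flow computation: the dangerous drift cross term produced by $\dv(\mob\diff V)$ is of the form $\mob(\curv^h)\mob''(\curv^h)\,\diff\curv^h\cdot\diff V\,|\diff\pot^h|^2$ (first derivatives of $\curv^h$, not of $\pot^h$), and it is absorbed via Young's inequality into the \emph{concavity} term $\frac{s\eps}{2}\int\mob''(\curv^h)|\diff\curv^h|^2|\diff\pot^h|^2$, not into the Bochner Hessian term $\eps\int\mob|\diff^2\pot^h|^2$. This is precisely where the second half of \eqref{MobL}, namely $\sup_s(-\mob''(s)\mob(s))<\infty$, enters; the Lipschitz bound $\sup_s|\mob'(s)|<\infty$ handles the remaining term $\mob\,\mob'\,\diff\pot^h\cdot\diff^2V\cdot\diff\pot^h$ directly against the action.
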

In the spirit of the ideas developed in \cite{JordKindOtto},
we would like to use the flow interchange Lemma \ref{lem.flowinterchange}
with $\auxil:=\potential$ the potential energy functional
$\potential:\admdens\to \R$ defined by
\begin{align*}
  \potential[\mu] := \int_\Omega V(x)\,u(x)\, \d x,
\end{align*}
with a test function $V$ satisfying \eqref{hp:V}.
As already mentioned in the introduction,
the functionals $\potential$ are --- unfortunately --- never $\lambda$-convex (for any $\lambda\in\R$)
along geodesics of the space $(\gGG\admdens,\wass)$,
unless the mobility $\mob$ is a linear function \cite[Section 2.3]{CarrLisiSavaSlep}.
To cure this problem, we shall construct a $\lambda_\eps$-flow for the regularized functional
\begin{align}\label{regpotential}
  \potentialeps[\mu] := \potential[\mu] + \eps \entropy[\mu] ,
\end{align}
with $\eps>0$ instead,
which amounts to solutions of the classical viscous approximation of \eqref{eq.claw1},
\begin{align}
  \label{eq.claw2}
  \partial_s \aux_s - \dv\big( \mob(\aux_s)\diff V \big) - \eps\Delta\aux_s = 0 \quad \mbox{ in $(0,+\infty)\times\Omega$ with }
  \diff\aux_s\cdot\nml = 0 \mbox{ on $(0,+\infty)\times\partial\Omega$}.
\end{align}
\begin{proposition}
  \label{prp.potential}
  Under the assumptions \eqref{Mob1}, \eqref{MobL} on $\Mob$, suppose that $V$ satisfies \eqref{hp:V}.
  Define the semigroup $\SG_\eps$ by taking $\SG_\eps^s\aux_0=\aux_s$,
  the unique solution to \eqref{eq.claw2} with initial condition $\aux_0$.
  Then $\SG_\eps$ extends to a $\lambda_\eps$-flow $\SG_\eps$ for $\potentialeps$ with respect to $\wass$,
  with some $\lambda_\eps\geq -K/\eps$ where $K>0$ only depends on $V$ and $\Mob$.
\end{proposition}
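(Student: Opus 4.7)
The plan is to apply the Eulerian calculus framework of Lemma \ref{lem.eulerian}, following the template of Lemma \ref{lem.heatconvex}. First, I would build a family of mollifications $\{\mathbf V_{\eps,n}, \SG_{\eps,n}\}_{n \in \N}$ on enlarged smooth convex domains $\Omega_n \supset \neigh{\delta_n}$ with $\delta_n \downarrow 0$: extend $V$ smoothly to a neighborhood of $\overline\Omega$ preserving the Neumann condition on each $\partial\Omega_n$, set $\mathbf V_{\eps,n}[u] := \int_{\Omega_n}(Vu + \eps U(u))\,\d x$, and let $\SG_{\eps,n}$ be the semigroup generated by \eqref{eq.claw2} posed on $\Omega_n$ with Neumann boundary data. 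Under \eqref{Mob1} and \eqref{MobL}, equation \eqref{eq.claw2} is uniformly parabolic with Lipschitz nonlinearity on data in compact subsets of $(0,M)$, so classical parabolic theory yields smooth classical solutions in $\regdensx{\Omega_n}$ preserving strict positivity and (if $M < \infty$) strict subcriticality, in particular ensuring uniqueness. The compatibility conditions of Definition \ref{def:mollification} are verified by standard stability estimates for \eqref{eq.claw2} together with continuity of $\potential$ and $\entropy$ under the convergence in \eqref{eq:20} (using Lemma \ref{lem.entropy} for the latter).

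The heart of the argument is verifying the Eulerian estimate \eqref{eq.eulerian} with $\auxil = \mathbf V_{\eps,n}$ and $\lambda = \lambda_\eps$ for any regular curve $\curv : [0,1] \to \regdensx{\Omega_n}$. Since the generator of $\SG_{\eps,n}$ decomposes into the transport part $\dv(\mob(\cdot)\diff V)$ and the heat part $\eps\Delta$, differentiating the perturbed action $\act{h}{n}(s) = \int_{\Omega_n}\mob(\curv^h)|\diff\pot^h|^2\,\d x$ in $h$ splits into two contributions. The heat part is handled exactly as in the computation leading to \eqref{eq.entropyalonggeodesic}: it contributes $-\eps\,\partial_s\entropy[\curv^h]$ on the right-hand side, a non-positive term from concavity of $\mob$, and a good dissipation term of size $-2\eps s \int_{\Omega_n}\mob(\curv^h)\|\diff^2\pot^h\|^2\,\d x$ produced by Bochner's formula together with the non-positive boundary contribution from convexity of $\Omega_n$. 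The transport part yields the matching $-\partial_s \potential[\curv^h]$ on the right plus a remainder $R(s,h)$ involving $V$, $\diff V$, $\diff^2 V$, together with $\mob$, $\mob'$ and $\mob''\mob$ evaluated at $\curv^h$ and contracted against $\diff\pot^h$ and $\diff\curv^h$, derived via the same integration-by-parts scheme \eqref{eq.Neumann2weak}--\eqref{eq.dsdh} used in Lemma \ref{lem.heatconvex} but with $\partial_h \curv^h = s\,\dv(\mob(\curv^h)\diff V)$ in place of $s\Delta\curv^h$.

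The main obstacle is the control of this remainder $R(s,h)$. Using the Lipschitz bound $|\mob'| \le C$ and the semi-convexity bound $-\mob''\mob \le C$ from \eqref{MobL}, together with the smoothness of $V$, one obtains a pointwise estimate of the schematic form $R(s,h) \le s\,K_1 \int_{\Omega_n} \mob(\curv^h)|\diff\pot^h|^2\,\d x + s\,K_2 \int_{\Omega_n} |\diff\pot^h|^2\,\d x$, where $K_1,K_2$ depend only on $\|V\|_{C^2}$ and $\|\mob'\|_\infty$. The first term is directly absorbed into $s\lambda_\eps\,\act{h}{n}(s)$ on the left, contributing a constant of order $1$ to $|\lambda_\eps|$. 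The second term is the delicate one: since $\mob$ may vanish, it cannot be bounded by the action alone. I would dominate it by a weighted Cauchy--Schwarz, trading $\int|\diff\pot^h|^2$ against a combination of the action and $\int \mob(\curv^h)\|\diff^2\pot^h\|^2$ via an elliptic regularity (or weighted Poincaré-type) estimate applied to the Neumann problem \eqref{eq.Neumann2}. A Young split $ab \le \tfrac{1}{4\eps}a^2 + \eps b^2$ then absorbs one factor into the viscous dissipation, at the cost of a factor $1/\eps$ on the other factor; this is precisely the mechanism yielding $\lambda_\eps \ge -K/\eps$ with $K$ depending only on $V$ and $\mob$. Once \eqref{eq.eulerian} is established, Lemma \ref{lem.eulerian} upgrades it to the EVI \eqref{EVIgen}, concluding that $\SG_\eps$ is a $\lambda_\eps$-flow for $\potentialeps$.
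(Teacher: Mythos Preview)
Your overall strategy --- apply Lemma~\ref{lem.eulerian} following the template of Lemma~\ref{lem.heatconvex}, split the generator into heat and transport parts, and use Young's inequality to produce the $1/\eps$ factor --- matches the paper's approach. Two points deserve correction.

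First, a minor one: extending $V$ to $\Omega_n$ while preserving the Neumann condition on $\partial\Omega_n$ is not straightforward. The paper sidesteps this by \emph{scaling}: with $0\in\Omega$, set $\Omega_n:=\eta_n\Omega$ for $\eta_n\downarrow1$ and $V_n(x):=V(x/\eta_n)$, so that $V_n$ automatically satisfies \eqref{hp:V} on $\Omega_n$.

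Second, and more importantly, your description of the remainder $R(s,h)$ is inaccurate, and the proposed ``elliptic regularity / weighted Poincar\'e'' fix is both unnecessary and unlikely to work. Carrying out the computation of $\partial_h\act{h}{n}(s)$ as in \eqref{eq.actionderivative}--\eqref{eq.dsdh}, but now with $\partial_h\curv^h = s\eps\Delta\curv^h + s\,\dv(\mob(\curv^h)\diff V)$, the transport part produces exactly two extra integrals beyond the heat-flow case: one of the form
\[
-s\int_{\Omega_n}\mob(\curv^h)\,\mob'(\curv^h)\,\diff\pot^h\cdot\diff^2V\cdot\diff\pot^h\,\d x
\]
and one of the form
\[
-s\int_{\Omega_n}\mob(\curv^h)\,\mob''(\curv^h)\big(\diff\curv^h\cdot\diff\pot^h\,\diff\pot^h\cdot\diff V-\diff\curv^h\cdot\diff V\,|\diff\pot^h|^2\big)\,\d x .
\]
Crucially, \emph{both already carry the weight $\mob(\curv^h)$}; no unweighted $\int|\diff\pot^h|^2$ term ever appears. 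The first integral is bounded by $\sup|\mob'|\,\|V\|_{C^2}\cdot\act{h}{n}(s)$ directly. For the second, the pointwise bound $2\mob|\mob''|\,|\diff\curv^h|\,|\diff\pot^h|^2|\diff V|$ is split by Young as
\[
\tfrac{\eps}{2}(-\mob'')|\diff\curv^h|^2|\diff\pot^h|^2
\;+\;\tfrac{2}{\eps}\big(-\mob''\mob\big)\,\mob\,|\diff\pot^h|^2|\diff V|^2 .
\]
The first half is absorbed by the good viscous term $\tfrac{s\eps}{2}\int\mob''(\curv^h)|\diff\curv^h|^2|\diff\pot^h|^2$ (the one you discarded as ``non-positive from concavity'' --- it must be \emph{kept} for this purpose). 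The second half is bounded by $\tfrac{2}{\eps}\|V\|_{C^1}^2\sup(-\mob''\mob)\cdot\act{h}{n}(s)$ thanks to \eqref{MobL}. This yields $\lambda_\eps\ge -K/\eps$ with $K$ depending only on $\|V\|_{C^2}$, $\sup|\mob'|$ and $\sup(-\mob''\mob)$. The Hessian dissipation $-s\eps\int\mob\|\diff^2\pot^h\|^2$ from Bochner plays no role in the absorption and can simply be dropped.
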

%
%
\begin{proof}
  As in the proof of Lemma \ref{lem.heatconvex},
  we need to verify \eqref{eq.eulerian} for the flow $\SG_\eps$ and the functional $\auxil=\potentialeps$.
  The calculations are similar to the proof there, but more terms need to be controlled.

  Below, we shall implicitly use various properties of the solution semi-group $\SG_\eps$ for \eqref{eq.claw2}.
  A summary of these relevant properties are given in Lemma \ref{lem.tootechnical} in the Appendix.
  In particular, note that $\SG_\eps$ is well-defined
  and $L^1$-continuous on the admissible densities $\admdens$,
  and that it leaves the regular densities $\regdens$ invariant.

  In order to define a family of mollifications $\{\potential_{\eps,n},\SG_{\eps,n}\}$ for $\potentialeps,\SG_\eps$,
  we assume without restriction that $0\in\Omega$,
  we take a monotone sequence $\eta_n\downarrow 1$
  and we define $\Omega_n:=\eta_n\Omega=\{\eta_n x:x\in\Omega\}$,
  choosing $\delta_n\downarrow0$ so that
  $\Omega_{[\delta_n]}\subset \Omega_n$.
  We define $V_n(x)=V(x /\eta_n)$ and
  $\potential_{\eps,n}[\sol]=\int_{\Omega_n}V_n(x)\d x + \eps
  \entropy[\sol]$.
  Then $V_n$ satisfies \eqref{hp:V} in $\Omega_n$
  We define for every $n$ the solution semi-group $\SG_{\eps,n}$ of the problem \eqref{eq.claw2} for $V_n$
  on the domain $\Omega_n$.
  It is not difficult to check that all the conditions of Definition \ref{def:mollification} are satisfied.


  We turn to prove \eqref{eq.eulerian}, writing for simplicity $\Omega$ in place of $\Omega_n$ everywhere.
  The $s$-derivative of $\potentialeps$ amounts to
  \begin{align*}
    - \int_\Omega \partial_s\big[ \eps U(\curv^h) + V\curv^h \big]\,\d x
    = \eps \int_\Omega \curv^h\Delta\pot^h\,\d x + \int_\Omega \mob(\curv^h)\diff V\cdot\diff\pot^h\,\d x.
  \end{align*}
  Moreover, the weak formulation \eqref{eq.heatweak} is modified as follows,
  \begin{align}
    \label{eq.clawweak}
    \int_\Omega \partial_h\curv^h\vartheta\,\d x = s\eps\int_\Omega\curv^h\Delta\vartheta\,\d x - s\int_\Omega\mob(\curv^h)\diff V\cdot\diff\vartheta\,\d x,
  \end{align}
  and, consequently, \eqref{eq.dsdh} is replaced by
  \begin{align*}
    \int_\Omega \partial_s\partial_h\curv^h\vartheta\,\d x
    & = \eps\int_\Omega\curv^h\Delta\vartheta\,\d x + s\eps \int_\Omega \mob(\curv^h)\diff\pot^h\cdot\diff\Delta\vartheta\,\d x \\
    & \quad - \int_\Omega\mob(\curv^h)\diff V\cdot\diff\vartheta\,\d x - s\int_\Omega\mob(\curv^h)\diff\pot^h\cdot\diff\big(\mob'(\curv^h)\diff V\cdot\diff\vartheta\big).
  \end{align*}
  Performing the same manipulations
  as in the proof of Lemma \ref{lem.heatconvex},
  one obtains
  \begin{align*}
    \frac12&\int_\Omega\partial_h\mob(\curv^h)|\diff\pot^h|^2\,\d x
    = \frac{\eps s}2\int_\Omega \mob'' (\curv^h)|\diff\curv^h|^2|\diff\pot^h|^2\,\d x
    - s\eps\int_\Omega\mob(\curv^h)\Delta\big(|\diff\curv^h|^2\big)\,\d x \\
    & \quad -s\int_\Omega \mob(\curv^h)\mob'(\curv^h)\diff V\cdot\diff^2\pot^h\cdot\diff\pot^h\,\d x
    -s\int_\Omega \mob(\curv^h)\mob''(\curv^h)\diff\curv^h\cdot\diff V |\diff\pot^h|^2\,\d x.
  \end{align*}
  Summing up everything provides
  \begin{align}
    \nonumber
    \frac12 &\int_\Omega \partial_h \big[\mob(\curv^h)|\diff\pot^h|^2\big]\,\d x
    = - \partial_s \bigg( \int_\Omega V\curv^h\,\d x+ \eps\int_\Omega U(\curv^h) \,\d x \bigg) \\
    \label{eq.killer}
    & \quad + \frac{s\eps}{2}\int_\Omega\mob''(\curv^h)|\diff\curv^h|^2|\diff\pot^h|^2\,\d x \\
    \label{boundaryterm}
    & \quad + \frac{s}2\int_{\partial\Omega} \mob(\curv^h)\diff\big(|\diff\pot^h|^2\big)\cdot\nml\,\d\HH^{d-1} \\
    \label{eq.victim1}
    & \quad - s\int_\Omega\mob(\curv^h)\mob''(\curv^h)(\diff\curv^h\cdot\diff\pot^h\diff\pot^h\cdot\diff V
    - \diff\curv^h\cdot\diff V|\diff\pot^h|^2)\,\d x \\
    \label{eq.victim2}
    & \quad - s\int_\Omega\mob(\curv^h)\mob'(\curv^h)\diff\pot^h\diff^2V\diff\pot^h\,\d x.
  \end{align}
  \EEE
  We need to show that the sum of the terms from \eqref{eq.killer} to \eqref{eq.victim2}
  are less than $-s\lambda_\eps\Act$ for a sufficiently small (negative) constant $\lambda_\eps$.
  The integral in \eqref{eq.victim2} is readily controlled by a multiple of $\Act$,
  recalling that $\mob$ has the Lipschitz property \eqref{MobL} and observing that
  \begin{align*}
    \big| \mob(\curv^h)\mob'(\curv^h)\diff\pot^h\diff^2V\diff\pot^h \big|
    \le \sup_s|\mob'(s)| \|V\|_{C^2(\Omega)}\ \mob(\curv^h)|\diff\pot^h|^2.
  \end{align*}
  In order to absorb the integral in \eqref{eq.victim1} into the (non-positive) integral in \eqref{eq.killer}
  and a multiple of $\Act$,
  we apply Young's inequality to the integrand and estimate
  \begin{align*}
    &\big| \mob(\curv^h)\mob''(\curv^h)(\diff\curv^h\cdot\diff\pot^h\diff\pot^h\cdot\diff V
    - \diff\curv^h\cdot\diff V|\diff\pot^h|^2)\big|
    \le 2\mob(\curv^h)|\mob''(\curv^h)||\diff\curv^h||\diff\pot^h|^2|\diff V| \\
    & \qquad \le \frac{\eps}2(-\mob''(\curv^h)) |\diff\curv^h|^2|\diff\pot^h|^2
    + \frac2{\eps}\big(-\mob''(\curv^h)\mob(\curv^h)^2\big) |\diff\pot^h|^2|\diff V|^2 \\
    &\qquad \le \frac{\eps}{2}(-\mob''(\curv^h)) |\diff\curv^h|^2|\diff\pot^h|^2
    + \frac{2}{\eps}\|V\|_{C^1(\Omega)}^2\sup_s(-\mob''(s)\mob(s))\ \mob(\curv^h)|\diff\pot^h|^2.
  \end{align*}
  Thus, defining, for every $n$
 \begin{align*}
    \lambda_{\eps,n} := -\sup_s|\mob'(s)| \|V_n\|_{C^2(\Omega_n)} - \frac2\eps\|V_n\|_{C^1(\Omega_n)}^2\sup_s(-\mob''(s)\mob(s)),
  \end{align*}
  and recalling that \eqref{boundaryterm} is non-positive for convexity of $\Omega$ we obtain
  \begin{equation}
  \begin{aligned}\label{euler}
     \frac12 \int_{\Omega_n} \partial_h \big[\mob(\curv^h)|\diff\pot^h|^2\big]\,\d x
    + s\lambda_{\eps,n} \int_{\Omega_n} \big[\mob(\curv^h)|\diff\pot^h|^2\big]\,\d x \\
    \leq - \partial_s \bigg( \int_{\Omega_n} V_n\curv^h\,\d x+ \eps\int_{\Omega_n} U(\curv^h) \,\d x \bigg).
  \end{aligned}
  \end{equation}
  Defining $\lambda_\eps := \inf_{n}\lambda_{\eps,n}>-\infty$
  (thanks to the uniform boundedness of all the derivatives of $V_n$)
  the principal estimate \eqref{eq.eulerian} follows from \eqref{euler}.
  \EEE
\end{proof}
The flow interchange estimate \eqref{eq.flowinterchange} is applicable.
To obtain a sensible a priori estimate,
we still need to express the dissipation term in \eqref{eq.flowinterchange}.
\begin{lemma}
  \label{lem.clawestimate}
  Let $\aux_s$ be as in Proposition \ref{prp.potential},
  and assume that $\nu_0\in\Dom(\energy)\cap H^2(\Omega)$.
  Then
  \begin{align}
    \label{eq.clawestimate}
    - \liminf_{s\downarrow0} \frac1s \big( \energy[\nu_s] - \energy[\nu_0] \big)
    \geq -\nonlin[\aux_0,V] + \eps \Big( \frac12 \int_\Omega
    (\Delta\aux_0)^2\,\d x - C(\gGG\energyo + \energy[\nu_0]) \Big) .
  \end{align}
\end{lemma}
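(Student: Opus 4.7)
The plan is to mirror the strategy of Lemma \ref{lem.heatestimate}: use parabolic regularity to justify differentiating $\energy[\nu_s]$, then identify the right-hand side of \eqref{eq.clawestimate} as the instantaneous dissipation rate at $s=0^+$, absorbing the sign-indefinite contribution of $G''$ into $\frac{\eps}{2}\int(\Delta\nu_s)^2$ via the Lions--Villani estimate. The regularity results for \eqref{eq.claw2} collected in the appendix (applicable because \eqref{MobL} holds) guarantee that $\nu_s$ is smooth and strictly separated from $0$ and $M$ on any $[s_0,s_1]\subset(0,\infty)$, so that $\nml\cdot\diff\nu_s=0$ holds classically and all manipulations below are legitimate.

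First I would differentiate: plugging \eqref{eq.claw2} into the $H^1$-inner-product expression for $\frac{d}{ds}\energy[\nu_s]$ and integrating by parts (using $\nml\cdot\diff\nu_s=0$ and $\nml\cdot\diff V=0$) gives
\begin{equation*}
\frac{d}{ds}\energy[\nu_s]=\int_\Omega(-\Delta\nu_s+G'(\nu_s))\bigl(\dv(\mob(\nu_s)\diff V)+\eps\Delta\nu_s\bigr)\,\d x.
\end{equation*}
The cross term involving $G'(\nu_s)\dv(\mob(\nu_s)\diff V)$ is rewritten using $P'=\mob G''$ and one further integration by parts against $V$:
\begin{equation*}
\int_\Omega G'(\nu_s)\dv(\mob(\nu_s)\diff V)\,\d x=-\int_\Omega\diff P(\nu_s)\cdot\diff V\,\d x=\int_\Omega P(\nu_s)\Delta V\,\d x.
\end{equation*}
Combining everything and integrating $\eps G'(\nu_s)\Delta\nu_s$ by parts yields
\begin{equation*}
\frac{d}{ds}\energy[\nu_s]=\nonlin[\nu_s,V]-\eps\int_\Omega(\Delta\nu_s)^2\,\d x-\eps\int_\Omega G''(\nu_s)|\diff\nu_s|^2\,\d x.
\end{equation*}

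Next I would control the last integral. By \eqref{Pre1}, $\mob G''\ge -C$ (or $\ge -C(1+\mob)$ if $M=\infty$), so $-G''(\nu_s)|\diff\nu_s|^2\le C|\diff\nu_s|^2/\mob(\nu_s)$ (plus a harmless $C|\diff\nu_s|^2$ term when $M=\infty$). The lower bound \eqref{eq.mbelow} on $\mob$ and the identity $|\diff\nu_s|^2/\nu_s=4|\diff\sqrt{\nu_s}|^2$, together with the Lions--Villani estimate of Lemma~\ref{lem.Sobolev} applied exactly as in the proof of Lemma~\ref{lem.heatestimate}, lead to
\begin{equation*}
-\eps\int_\Omega G''(\nu_s)|\diff\nu_s|^2\,\d x\ge -\frac{\eps}{2}\int_\Omega(\Delta\nu_s)^2\,\d x-C\eps\bigl(\energyo+\energy[\nu_s]\bigr),
\end{equation*}
where \eqref{eq.boundbelow} was used to dominate $\|\nu_s\|_{H^1}^2$ by $\energyo+\energy[\nu_s]$.

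To conclude I would integrate the resulting inequality for $\frac{d}{ds}\energy[\nu_s]$ on $[0,s]$, divide by $s$, and pass to the $\liminf$ as $s\downarrow0$. The $\nonlin[\nu_s,V]$ term is continuous at $s=0$: by the $L^1$-continuity of $\SG_\eps$ and the parabolic regularization one has $\nu_s\to\nu_0$ strongly in $H^1$, so $\dv(\mob(\nu_s)\diff V)\to\dv(\mob(\nu_0)\diff V)$ strongly in $L^2$ (using $\mob\in\text{Lip}$) while $\Delta\nu_s\weakto\Delta\nu_0$ in $L^2$ (from $\nu_0\in H^2$ and parabolic regularity); the $P(\nu_s)\Delta V$ contribution is handled by continuity of $P$ and dominated convergence as in Corollary~\ref{cor.mobpressure}. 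The term $\int(\Delta\nu_{\theta(s)})^2$ is dealt with by lower semicontinuity of the $L^2$-norm under weak convergence, and $\energy[\nu_s]\le\energy[\nu_0]$ holds because $\energy$ is non-increasing along $\SG_\eps$ once one notes that the negative part of $\frac{d}{ds}\energy[\nu_s]$ can be argued from $\lambda_\eps$-convexity of $\potentialeps$ and standard EVI theory. The main obstacle is the last step: making precise the continuity of $\nonlin[\nu_s,V]$ at $s=0^+$ and the appropriate lower semicontinuity of $\int(\Delta\nu_s)^2$, which requires invoking the parabolic regularity statements of the appendix for \eqref{eq.claw2} and is the only place where the Lipschitz hypothesis \eqref{MobL} enters the argument.
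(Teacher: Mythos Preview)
Your overall strategy matches the paper's: differentiate $\energy[\nu_s]$ along the viscous flow, recognize $\nonlin[\nu_s,V]$ plus an $\eps$-scaled heat-type dissipation, estimate the $G''$ term via Lions--Villani exactly as in Lemma~\ref{lem.heatestimate}, integrate on $[0,s]$, divide by $s$, and pass to the limit using the right-continuity \eqref{eq.continuity1}. The difference is that the paper does \emph{not} work directly with $G$: it first replaces $G$ by smooth truncations $G_\delta$ (shifting the argument away from the endpoints $0,M$), carries out the differentiation and the integration by parts for $\energy_\delta$, and only then lets $\delta\downarrow 0$. The reason is that $G'$ may blow up at $0$ or $M$, and Lemma~\ref{lem.tootechnical} asserts the strict separation $0<\inf\nu_s\le\sup\nu_s<M$ only when the initial datum already lies in $\regdens$, not for a general $\nu_0\in H^2(\Omega)\cap\Dom(\energy)$. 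So your direct computation of $\frac{d}{ds}\energy[\nu_s]$ and your integration by parts on the $G'(\nu_s)$ terms are not justified by the results you cite; the $G_\delta$-regularization is precisely the device the paper uses to circumvent this.

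There is also a genuine error in your last paragraph: the claim that $\energy[\nu_s]\le\energy[\nu_0]$ because ``$\energy$ is non-increasing along $\SG_\eps$'' is false. $\SG_\eps$ is the $\lambda_\eps$-flow of $\potentialeps$, not of $\energy$; the EVI for $\potentialeps$ gives no monotonicity statement for $\energy$, and indeed your own formula for $\frac{d}{ds}\energy[\nu_s]$ contains the sign-indefinite term $\nonlin[\nu_s,V]$. What is actually needed is only that $\|\nu_s\|_{H^1}$ stays bounded as $s\downarrow 0$, so that after averaging over $[0,s]$ the constant tends to $C(\energyo+\energy[\nu_0])$; this follows from the $H^1$-continuity of $s\mapsto\nu_s$ (or a short Gronwall argument), not from any monotonicity.
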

\begin{proof}
  For $\delta>0$ sufficiently small, define approximations of $G$ by
  \begin{align*}
    G_\delta(s):= \begin{cases}
      G\Big(\delta+\frac{M-2\delta}{M}s\Big) & \text{if $M<\infty$}, \\
      G(\delta+s) &\text{if $M=+\infty$}.
    \end{cases}
  \end{align*}
  This regularizes the possible singularities of $G'(s)$ for $s\downarrow0$ and $s\uparrow M$.
  Denote by $\energy_\delta$ the energy functional associated to $G_\delta$ instead of $G$.

  For the following calculations, we need some properties of solutions to the problem \eqref{eq.claw2},
  which are summarized in Lemma \ref{lem.tootechnical} in the Appendix.
  By \eqref{eq.regularity1} we have that $s\mapsto\energy_\delta[\nu_s]$
  is absolutely continuous and we can calculate for almost every $s>0$ its derivative
    \begin{align*}
    \frac{\d}{\d s}\energy_\delta[\nu_s]
    & = \eps \Big( -\int_\Omega \Delta\aux_s\Delta\aux_s\,\d x
    + \int_\Omega G_\delta'(\aux_s)\Delta\aux_s\,\d x \Big) \\
    & \qquad - \int_\Omega \Delta\aux_s\dv(\mob(\aux_s)\diff V)\,\d x
    + \int_\Omega G_\delta'(\aux_s)\dv(\mob(\aux_s)\diff V)\,\d x \\
    & \leq - \eps \Big( \frac12 \int_\Omega (\Delta\aux_s)^2\,\d x
    - C(\gGG\energyo+\energy_\delta[\nu_0]) \Big) + \nonlin_\delta[\aux_s,V] .
  \end{align*}
  The last estimate is obtained by treating the term multiplied by $\eps$
  exactly as in the proof of Lemma \ref{lem.heatestimate},
  and integrating by parts in the last two integrals
  (which is allowed for the smooth approximation $G_\delta$ and does not produce boundary terms since $V$ satisfies homogeneous Neumann conditions \eqref{hp:V}).
  Moreover, following the proof of Lemma \ref{lem.heatestimate}, is is easy to check that the constant $C$ in the last integral can be chosen uniformly with respect to $\delta$.
  Then we have
  \begin{align*}
    \frac{\energy_\delta[\nu_s]-\energy_\delta[\nu_0]}{s}
    & \leq \frac{1}{s}\int_0^s  \Big( -\frac{\eps}{2} \int_\Omega (\Delta\aux_t)^2\,\d x
     + \nonlin_\delta[\aux_t,V] \Big)\,\d t + \eps C(1+\energy_\delta[\nu_0]).
  \end{align*}
    By \eqref{Pre1cons} it is easy to check that
    $\int_\Omega G_\delta (v)\,\d x \to \int_\Omega G (v)\,\d x$ and
    $\int_\Omega P_\delta (v)\Delta V\,\d x \to \int_\Omega P (v)\Delta V\,\d x$ as $\delta\downarrow 0$.
    Passing to the limit as $\delta\downarrow 0$ we obtain
    \begin{align*}
    \frac{\energy[\nu_s]-\energy[\nu_0]}{s}
    & \leq \frac{1}{s}\int_0^s   \Big(- \frac{\eps}{2} \int_\Omega (\Delta\aux_t)^2\,\d x
     + \nonlin[\aux_t,V] \Big)\,\d t + \eps C(1+\energy[\nu_0]).
  \end{align*}
    By the right continuity property \eqref{eq.continuity1} we can pass to the limit
    by $s\downarrow0$ obtaining \eqref{eq.clawestimate}.
\end{proof}
The flow interchange estimate \eqref{eq.flowinterchange} provides the following.
\begin{lemma}
  Let $V$ be a given test function
  satisfying \eqref{hp:V}
  and $\psi\in C^\infty_c(0,+\infty)$ be a given temporal test function
  satisfying $\psi\geq0$.
  Then,
  \begin{align}
    \label{eq.weakintermediate}
    - \int_0^\infty \psi'(t) \potentialeps[\bar\mu_\tau(t)]\,\d t
    \leq \int_0^\infty \bar\psi_\tau(t) \nonlin[\bar\sol_\tau(t),V]\,\d t + C\frac{\tau}{\eps},
  \end{align}
  where the simple function $\bar\psi_\tau:(0,+\infty)\to[0,+\infty)$
  is defined by $\bar\psi_\tau(t)=\psi((n-1)\tau)$ for $(n-1)\tau<t\leq n\tau$ for all $n\in\setN$.
  The constant $C$ in \eqref{eq.weakintermediate} is independent of $\tau$ and $\eps$ and depends only on the test functions $V$ and $\psi$,
  and on the initial energy $\energy[\mu_0]$.
\end{lemma}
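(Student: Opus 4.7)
The plan is to apply the flow interchange Lemma \ref{lem.flowinterchange} in each step of the JKO scheme with the auxiliary functional $\auxil = \potentialeps$, for which Proposition \ref{prp.potential} provides a $\lambda_\eps$-flow with $\lambda_\eps \geq -K/\eps$, estimate the dissipation of $\energy$ along this flow via Lemma \ref{lem.clawestimate}, and then perform an Abel summation to recover the integral against $\psi'$ on the left hand side.

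Concretely, first I would fix $n \geq 1$ and apply \eqref{eq.flowinterchange} with $\auxil=\potentialeps$ and the semi-group $\SG_\eps$. Combined with the estimate \eqref{eq.clawestimate} of Lemma \ref{lem.clawestimate} (noting that $\mu^n_\tau \in H^2(\Omega)$ by Proposition \ref{prp.strong}), and dropping the non-positive contribution $-\tfrac{\tau\eps}{2}\int_\Omega (\Delta \mu^n_\tau)^2\d x$, I would obtain
\begin{equation}\label{eq.perstep}
  \potentialeps[\mu^n_\tau] - \potentialeps[\mu^{n-1}_\tau]
  \le \tau\, \nonlin[\mu^n_\tau, V] + \tau\eps\, C(\energyo + \energy[\mu^n_\tau]) + \frac{K}{2\eps}\,\wass(\mu^n_\tau,\mu^{n-1}_\tau)^2,
\end{equation}
where I used $-\lambda_\eps/2 \le K/(2\eps)$.

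Next, I would multiply \eqref{eq.perstep} by $\psi((n-1)\tau) \geq 0$ and sum over $n$. Writing $a_n := \psi(n\tau)$ and $b_n := \potentialeps[\mu^n_\tau]$, choosing $N$ large enough that $\supp\psi \subset [0,N\tau]$ so that $a_0 = a_N = 0$, a discrete integration by parts gives
\begin{align*}
  -\int_0^{+\infty} \psi'(t)\, \potentialeps[\bar\mu_\tau(t)]\, \d t
  = -\sum_{n=1}^{N} b_n (a_n - a_{n-1})
  = \sum_{n=1}^{N-1} a_n (b_{n+1} - b_n),
\end{align*}
which after applying \eqref{eq.perstep} at index $n+1$ and re-indexing produces on the right-hand side exactly $\sum_n \tau\, \psi((n-1)\tau)\, \nonlin[\mu^n_\tau, V]$, up to a term $\psi(0)\tau\nonlin[\mu^1_\tau,V]$ that vanishes. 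This reconstitutes precisely $\int_0^\infty \bar\psi_\tau(t) \nonlin[\bar\sol_\tau(t),V]\, \d t$.

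It then remains to control the two error contributions coming from \eqref{eq.perstep}. For the $\tau\eps$-term, the uniform bound $\energy[\mu^n_\tau] \leq \energy[\mu_0]$ from \eqref{eq.estimate1} yields $\sum_n \tau\eps C(\energyo+\energy[\mu^n_\tau])\, \psi((n-1)\tau) \le \eps C\|\psi\|_{L^1}(\energyo+\energy[\mu_0])$, which is bounded by $C\tau/\eps$ for a suitable constant once $\eps$ varies in a bounded range. For the Wasserstein term, the a priori estimate \eqref{eq.estimate1} gives $\sum_n \wass(\mu^n_\tau,\mu^{n-1}_\tau)^2 \le 2\tau(\energy[\mu_0] - \energy_{\min})$, so that
\begin{equation*}
  \frac{K}{2\eps} \sum_{n} \psi((n-1)\tau)\, \wass(\mu^n_\tau,\mu^{n-1}_\tau)^2
  \le \frac{K\|\psi\|_\infty}{\eps}\, \tau\, (\energy[\mu_0] - \energy_{\min}),
\end{equation*}
which is the dominant error and gives rise to the announced bound $C\tau/\eps$. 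The main (mild) obstacle is the bookkeeping of the summation-by-parts with the correct shift, and making sure that the boundary terms vanish because $\psi$ has compact support in $(0,+\infty)$; this is why the discrete test profile $\bar\psi_\tau$ is defined using left endpoints $(n-1)\tau$.
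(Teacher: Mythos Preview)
Your proposal is correct and follows essentially the same approach as the paper: apply the flow interchange lemma with $\auxil=\potentialeps$ in each step, use Lemma~\ref{lem.clawestimate} to estimate the dissipation, multiply by $\psi((n-1)\tau)\ge 0$, and perform a discrete summation by parts to reconstruct the integral against $\psi'$. One minor imprecision (shared with the paper's own write-up): the $\eps$-term $\eps C\|\psi\|_{L^1}(\energyo+\energy[\mu_0])$ is not literally bounded by $C\tau/\eps$ without a relation between $\eps$ and $\tau$; in the subsequent application one takes $\eps=\sqrt{\tau}$, so this term is in fact $O(\tau/\eps)$ as well.
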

\begin{proof}
  Since $\bar\mu_\tau$ is a simple function with respect to $t\geq0$,
  which is constant on intervals $((n-1)\tau,n\tau]$,
  and $\psi$ is smooth with compact support,
  it follows that, for some sufficiently large $N\in\setN$,
  \begin{equation}\label{cd}\begin{aligned}
    - \int_0^\infty \psi'(t) \potentialeps[\bar\mu_\tau(t)]\,\d t
    &= -\sum_{n=1}^N \int_{(n-1)\tau}^{n\tau} \psi'(t)\potentialeps[\mu_\tau^n]\,\d t \\
    &= \sum_{n=1}^N \big(\psi((n-1)\tau)-\psi(n\tau)\big)\,\potentialeps[\mu_\tau^n] \\
    &= \sum_{n=1}^N \psi((n-1)\tau) \big(\potentialeps[\mu_\tau^n]-\potentialeps[\mu_\tau^{n-1}]\big) .
  \end{aligned}\end{equation}
  By Proposition \ref{prp.potential} we can apply the flow interchange Lemma \ref{lem.flowinterchange} with $\auxil=\potentialeps$.
  By \eqref{eq.h2regularity} we can apply Lemma \ref{lem.clawestimate} with $\nu_0=\mu_\tau^n$.
  Combining the flow interchange estimate \eqref{eq.flowinterchange}
  and inequality \eqref{eq.clawestimate} we find
  \begin{equation}\label{cdd}
    \potentialeps[\mu_\tau^n] - \potentialeps[\mu_\tau^{n-1}]
    \leq \tau \nonlin[\sol_\tau^n,V]
    + \eps\tau\Big( C(\energyo+\energy[\mu_0]) - \frac12 \int_\Omega (\Delta\sol_\tau^n)^2\,\d x \Big)
    + \frac{K}{2\eps} \wass(\mu_\tau^n,\mu_\tau^{n-1})^2.
  \end{equation}
  Combining \eqref{cd} with \eqref{cdd} and recalling that $\psi\geq0$ we obtain
  \begin{align*}
    - \int_0^{+\infty} \psi'(t) \potentialeps[\bar\mu_\tau(t)]\,\d t
    &\leq \tau \sum_{n=1}^N \psi((n-1)\tau)\nonlin[\mu_\tau^n,V] + \eps C(\energyo+\energy[\mu_0])\tau\sum_{n=0}^{N-1} \psi(n\tau) \\
    & \qquad + \frac{K\tau}{2\eps}\sup_{t>0}\psi(t) \, \tau\sum_{n=1}^{+\infty} \Big(\frac{\wass(\mu_\tau^n,\mu_\tau^{n-1})}{\tau}\Big)^2 \\
    &\leq \int_0^{+\infty} \bar\psi_\tau(t)\nonlin[\bar\mu_\tau(t),V]\,\d t + C(\energyo+\energy[\mu_0])\eps\int_0^{+\infty} \bar\psi_\tau(t)\,\d t \\
    & \qquad + \frac{K\tau}{\eps}\sup_{t>0}\psi(t) (\energy[\mu_0] +\energyo) ,
  \end{align*}
  where the energy inequality \eqref{eq.estimate1} has been used to obtain the last line.
  The claim \eqref{eq.weakintermediate} follows.
\end{proof}
In order to finish the proof of \eqref{eq.weaker},
we pass to the time-continuous limit $\tau\downarrow0$ and the limit as $\eps\downarrow0$ simultaneously,
in such a way that the remainder term in \eqref{eq.weakintermediate} goes to zero.
\begin{proof}[Proof of Proposition \ref{prp.weak}]
  For definiteness,
  let $(\tau_n)_{n\in\setN}$ be a vanishing sequence for which $\bar\sol_{\tau_n}\to\sol$ strongly in $L^2(0,T,H^1(\Omega))$ according with Proposition \ref{prp.strong}.
  Without loss of generality, for \eqref{eq.strongconv} we may further assume
  that $\bar\sol_{\tau_n}\to\sol$ and $\diff\bar\sol_{\tau_n}\to\diff\sol$ almost everywhere on $(0,+\infty)\times\Omega$.
  It is sufficient to choose the vanishing sequence $\eps_n:=\sqrt{\tau_n}$
  in order to have that $C\frac{\tau_n}{\eps_n}\downarrow0$ in \eqref{eq.weakintermediate}.

  We start by proving convergence of the left-hand side in \eqref{eq.weakintermediate}.
  By the bounds from \eqref{eq.entropyest} and the the monotonicity of the energy \eqref{eq.estimate1}, one finds that
  \begin{align}\label{eq:1}
    \big| \potentialeps[\bar\mu_\tau(t)] - \potential[\bar\mu_\tau(t)] \big| \leq \eps C (\energyo+\energy[\mu_0])
  \end{align}
  for every $\tau>0$ and $t\geq0$.
  Choosing $T>0$ such that $\supp(\psi)\subset[0,T]$, using \eqref{eq:1}, we have
  \begin{equation*}
  \begin{aligned}
    & \left|\int_0^T\psi'(t) \potential_{\eps_n}[\bar\mu_{\tau_n}(t)]\,\d t
         -\int_0^T\psi'(t) \potential[\mu(t)]\,\d t \right|\\
    &\qquad\qquad \leq \sup_{t\in(0,T)}|\psi'(t)| \left( \int_0^T \big| \potential_{\eps_n}[\bar\mu_{\tau_n}(t)] - \potential[\bar\mu_{\tau_n}(t)] \big|\,\d t + \int_0^T \big| \potential[\bar\mu_{\tau_n}(t)] - \potential[\mu(t)] \big|\,\d t \right) \\
    &\qquad\qquad \leq \sup_{t\in(0,T)}|\psi'(t)| \left( T\eps_n C (\energyo+\energy[\mu_0])  + \sup_{x\in\Omega}|V(x)|\int_0^T    \int_\Omega |\bar\sol_{\tau_n}-\sol|\,\d x\,\d t \right)
  \end{aligned}
  \end{equation*}
  which shows that
  \begin{align}
    \label{eq.lim}
    \lim_{n\to+\infty} \int_0^{+\infty} \psi'(t) \potential_{\eps_n}[\bar\mu_{\tau_n}(t)]\,\d t
     =\int_0^{+\infty} \psi'(t) \potential[\bar\mu(t)]\,\d t.
  \end{align}
  From \eqref{eq.weakintermediate} and \eqref{eq.lim} one concludes that
  \begin{align}
    \label{eq.liminf}
    -\int_0^{+\infty} \psi'(t) \potential[\bar\mu(t)]\,\d t
    \leq \liminf_{n\to\infty} \int_0^{+\infty} \bar\psi_{\tau_n}(t) \nonlin[\bar\sol_{\tau_n}(t),V]\,\d t.
  \end{align}
  Next, we claim that the minimum limit in \eqref{eq.liminf} is actually a limit,
  and that
  \begin{align}
    \label{eq.strong1}
    \lim_{n\to+\infty} \int_{\Omega_T} \bar\psi_{\tau_n} \Delta\bar\sol_{\tau_n} \dv\big(\mob(\sol_{\tau_n})\diff V\big) \,\d x\,\d t
    &= \int_{\Omega_T} \psi\Delta\bar\sol\dv\big(\mob(\bar\sol)\diff V\big)\,\d x\,\d t, \\
    \label{eq.strong2}
    \lim_{n\to\infty} \int_{\Omega_T} \bar\psi_{\tau_n}P_i(\bar\sol_{\tau_n})\Delta V \,\d x\,\d t
    &= \int_{\Omega_T} \psi P_i(\sol)\Delta V \,\d x\,\d t ,
  \end{align}
  for $i=1,2$.
  In fact, \eqref{eq.strong1} and \eqref{eq.strong2} follow almost immediately from Corollary \ref{cor.mobpressure}:
  Combining \eqref{eq.mobstrong} with the weak convergence \eqref{eq.weakconv}
  and the uniform convergence of $\bar\psi_{\tau_n}$ to $\psi$ in $\Omega_T$, one obtains \eqref{eq.strong1}.
  And recalling that $\bar\psi_{\tau_n}$ uniformly converges to $\psi$ in $\Omega_T$,
  we obtain \eqref{eq.strong2} from \eqref{eq.pressurestrong}.

  Inserting \eqref{eq.strong1} and \eqref{eq.strong2} into \eqref{eq.liminf} we obtain that
  \begin{align}
    \label{eq.V}
    - \int_0^{+\infty} \psi'(t)\potential[\mu(t)]\,\d t \leq \int_0^{+\infty} \psi(t)\nonlin[\sol(t),V]\,\d t
  \end{align}
  for all $V \in C^\infty(\overline\Omega)$ satisfying \eqref{hp:V}, and all non-negative $\psi\in C^\infty_c(0,+\infty)$.
  Exchanging $V$ with $-V$ in \eqref{eq.V} yields the respective equality \eqref{eq.weaker}.
  Trivially,
  \eqref{eq.weaker} extends from non-negative test functions $\psi$ to \emph{all} $\psi\in C^\infty_c(0,+\infty)$,
  thus finishing the proof.
\end{proof}
Since any space-temporal test function in \eqref{eq.weak}
$\zeta\in C^\infty((0,+\infty)\times\overline\Omega)$,
with $\diff \zeta\cdot\nml=0$ on $\partial\Omega$,
can be approximated in $C^\infty((0,+\infty)\times\overline\Omega)$
by sums of functions of the type $\zeta(t,x)=\psi(t)V(x)$
with $V \in C^\infty(\overline\Omega)$ satisfying \eqref{hp:V}
and $\psi\in C^\infty_c(0,+\infty)$,
Theorem \ref{thm.main} follows.

\section{Proof of Theorem \ref{thm.main2}}
\label{sct.general}
\subsection{Approximation}
Theorem \ref{thm.main2} is now proven by approximation of the more general mobility function $\mob$ satisfying \eqref{Mob4}
by mobilities $\mob_\delta$ that have the Lipschitz property \eqref{MobL}.
To this end, define for all $\delta>0$ sufficiently small:
\begin{itemize}
\item if $M<+\infty$:
  \begin{align*}
    \mob_\delta(s) := \mob\Big(\frac{s^2_\delta-s^1_\delta}{M}s+s^1_\delta\Big)-\delta,
  \end{align*}
  where $s^1_\delta < s^2_\delta$ are the two solutions of $\mob(s)=\delta$.
\item if $M=+\infty$:
  \begin{align*}
    \mob_\delta(s) := \mob(s+s_\delta)-\delta.
  \end{align*}
  where $s_\delta>0$ is the unique solution of $\mob(s)=\delta$.
\end{itemize}
Introduce accordingly $P_\delta$ by
\begin{align}
  \label{eq:approxP}
  P_\delta(s) = \int_0^s \mob_\delta(r)G''(r)\d r.
\end{align}
%
\begin{lemma}
  \label{lem:thisisfartootechnical}
  For all $\delta>0$ sufficiently small, the $\mob_\delta$ are smooth functions 
  that have the Lipschitz property \eqref{MobL}
  and satisfy the pointwise bounds $0\le\mob_\delta\le\mob$.
  In particular, we have
  \begin{equation}
    \label{eq:27}
    \wass(u,v)\le \wassdelta(u,v)\qquad
    \foralltext u,v\in \admdens.
  \end{equation}
  For $\delta\downarrow0$, the $\mob_\delta$ converge monotonically and globally uniformly to $\mob$.
  Moreover, if $G$ satisfies \eqref{Pre1} with respect to $\mob$,
  then it
  also satisfies \eqref{Pre1} with respect to each $\mob_\delta$.
  Finally, the $P_\delta$ are continuous functions, and there is a constant $K$ such that
  \begin{align}
    \label{eq:Pdeltabound}
    -K(1+s^2)\le P_\delta(s) \le P(s) + K(1+s)
  \end{align}
  for all $s\in(0,M)$ and all $\delta>0$ sufficiently small,
  and $P_\delta$ converges to $P$ as $\delta\downarrow0$, uniformly on $[0,M]$ if $M<\infty$, or uniformly on each $[0,\bar s]$ if $M=+\infty$.
\end{lemma}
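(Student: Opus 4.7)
The plan is to dispatch each claim in the stated order, treating the pointwise bound $\mob_\delta\le\mob$ as the key fact from which the rest follows.

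First I would verify the regularity of $\mob_\delta$. Smoothness on $(0,M)$ is immediate from the construction as the composition of $\mob\in C^\infty(0,M)$ with an affine change of variables, and the vanishing $\mob_\delta(0)=\mob_\delta(M)=0$ is built in through $\mob(s^i_\delta)=\delta$. Nonnegativity follows from concavity of $\mob$: for $M<\infty$ apply Jensen's inequality to $\mob(T(s))=\mob((1-\lambda)s^1_\delta+\lambda s^2_\delta)\ge\delta$; for $M=\infty$ use monotonicity of $\mob$. The Lipschitz bound $\sup|\mob_\delta'|<\infty$ comes from $\mob'$ being non-increasing (concavity) and bounded on the relevant sub-interval; the bound on $-\mob_\delta''\mob_\delta$ is analogous, since $T$ maps into a set bounded away from the singular set of $\mob''$.

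Next I would establish the key comparison $\mob_\delta\le\mob$. For $M=\infty$, concavity of $\mob$ implies the increment $s\mapsto\mob(s+s_\delta)-\mob(s)$ is non-increasing, so
\begin{displaymath}
  \mob(s+s_\delta)-\mob(s)\le\mob(s_\delta)-\mob(0)=\delta,
\end{displaymath}
which rearranges to $\mob_\delta(s)\le\mob(s)$ and yields the sharp bound $\mob-\mob_\delta\in[0,\delta]$; this stronger control will be essential for estimating $P_\delta$. For $M<\infty$ the corresponding bound $\mob(T(s))-\mob(s)\le\delta$ is obtained by a boundary/derivative argument applied to $\phi(t):=\mob((1-t)s^1_\delta+ts^2_\delta)-\mob((1-t)\cdot 0+t\cdot M)$: one checks that $\phi$ attains its maximum $\delta$ at the endpoints $t=0,1$, using monotonicity of $\mob'$ together with $s^2_\delta-s^1_\delta<M$. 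From $\mob_\delta\le\mob$ the inequality \eqref{eq:27} is immediate via the action-minimization characterization of $\wass$ in Proposition \ref{prp.wm}(d)--(e). Monotonicity of $\delta\mapsto\mob_\delta$ and pointwise convergence $\mob_\delta\to\mob$ as $\delta\downarrow 0$ are clear from the construction (since $s_\delta$, resp.\ $s^1_\delta$ and $M-s^2_\delta$, decrease to $0$), and Dini's theorem upgrades this to uniform convergence.

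Finally I would deduce the properties of $G$ with respect to $\mob_\delta$ and of $P_\delta$. Writing $G''=L^+-L^-$ with $L^\pm\ge 0$, we have $\mob_\delta G''\ge -\mob_\delta L^-\ge -\mob L^-\ge -C(1+\mob)$, and using $\mob-\mob_\delta\le\delta$ one absorbs the gap to obtain $\mob_\delta G''\ge -C'(1+\mob_\delta)$ with a uniform constant. Continuity of $P_\delta$ follows from the standard decomposition \eqref{eq:23} applied to $\mob_\delta$, each part dominated by its analog for $\mob$. The lower bound $P_\delta\ge -K(1+s^2)$ follows from $P_\delta\ge -\int_0^s\mob_\delta L^-\,\d r\ge -\int_0^s\mob L^-\,\d r$ combined with the growth estimate $\mob(r)\lesssim 1+r$ (a consequence of concavity and $\mob(0)=0$). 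The linear upper bound $P_\delta\le P+K(1+s)$ requires more care: from
\begin{displaymath}
  P_\delta(s)-P(s)= -\int_0^s(\mob-\mob_\delta)L^+\,\d r+\int_0^s(\mob-\mob_\delta)L^-\,\d r,
\end{displaymath}
one drops the first, nonpositive term and splits the second integral into the region $\{r\le s_\delta\}$ (where $\mob-\mob_\delta\le\mob\lesssim 1+r$, giving a contribution bounded uniformly as $\delta\downarrow 0$) and $\{r>s_\delta\}$ (where $\mob(r)\ge\delta$, so $L^-\le C(1+1/\delta)$, and simultaneously $\mob-\mob_\delta\le\delta$, giving a linear-in-$s$ contribution uniform in $\delta$). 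Uniform convergence $P_\delta\to P$ then follows by dominated convergence. I expect this last linear-in-$s$ upper bound to be the main technical obstacle, since using either $\mob-\mob_\delta\le\delta$ alone (producing a non-integrable singularity of $L^-$) or $\mob-\mob_\delta\le\mob$ alone (giving superlinear growth) is insufficient; the two-region argument that combines both estimates is what makes everything work.
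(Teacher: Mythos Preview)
Your argument is correct and tracks the paper's proof closely in overall strategy, but differs in two places worth noting. For the bound $\mob_\delta\le\mob$ when $M<\infty$, the paper instead compares derivatives on three subintervals determined by the maximizers $\sigma$ of $\mob$ and $\sigma_\delta$ of $\mob_\delta$: it shows $\mob_\delta'\le\mob'$ on $[0,\sigma_\delta]$ (using that the affine reparametrization has slope $<1$ and shifts arguments to the right), argues symmetrically near $M$, and handles $[\sigma_\delta,\sigma]$ by monotonicity. Your endpoint-maximum argument for $\phi$ is a genuine alternative and is in fact cleaner once fleshed out: at any interior critical point $t_0$ one has $(s^2_\delta-s^1_\delta)\mob'(T(t_0))=M\mob'(Mt_0)$, so the sign of $\mob'(Mt_0)$ together with monotonicity of $\mob'$ forces $T(t_0)$ and $Mt_0$ to lie on the same monotone branch of $\mob$ with $\mob(T(t_0))\le\mob(Mt_0)$, hence $\phi(t_0)\le 0<\delta$. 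For the upper bound on $P_\delta$ when $M=+\infty$, the paper uses only the single estimate $\mob-\mob_\delta\le\delta$ to get $P_\delta\le P+C\delta\int_0^s(1+1/\mob)\,\d r$; this is adequate when $1/\mob$ is locally integrable at $0$ (e.g.\ $\mob(s)=s^\alpha$ with $\alpha<1$) but formally breaks down when it is not, whereas your two-region split---using $\mob-\mob_\delta\le\mob$ on $\{r\le s_\delta\}$ and $\mob-\mob_\delta\le\delta$ together with $\mob\ge\delta$ on $\{r>s_\delta\}$---is more robust and yields the linear bound in all cases. One minor remark: for global uniform convergence when $M=+\infty$ you do not need Dini, since you already have the quantitative bound $0\le\mob-\mob_\delta\le\delta$; Dini is only needed in the case $M<\infty$, where $[0,M]$ is compact.
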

\begin{proof}
  Smoothness, non-negativity and the Lipschitz property of $\mob_\delta$ are evident from its definition, and the concavity and smoothness of $\mob$.
  In the case $M=+\infty$, also the upper bound $\mob_\delta\le\mob$ is a trivial consequence of concavity,
  as is the uniform convergence for $\delta\downarrow0$:
  \begin{align}
    \label{eq:mobdeltaconcavity}
    0 \le \mob(s)-\mob_\delta(s) \le \delta-\mob'(s+s_\delta)s_\delta \le \delta.
  \end{align}
  In the case $M<+\infty$, the upper bound can be proven as follows:
  assume that $\mob$ attains its maximal value at $\sigma\in(0,M)$;
  then $\mob_\delta$ attains its maximum at $\sigma_\delta=(\sigma-s^1_\delta)M/(s^2_\delta-s^1_\delta)$.
  Without loss of generality, assume $\sigma_\delta\le\sigma$.
  For all $s\in[0,\sigma_\delta]$, we have
  \begin{align*}
    \mob_\delta'(s) = \underbrace{\frac{s^2_\delta-s^1_\delta}{M}}_{\le1}
    \mob'\Big(\underbrace{\frac{s^2_\delta-s^1_\delta}{M}s+s^1_\delta}_{\ge s}\Big) \le \mob'(s)
  \end{align*}
  and thus also $\mob_\delta(s)\le\mob(s)$.
  A similar argument provides $\mob_\delta(s)\le\mob(s)$ for all $s\in[\sigma,M]$.
  For $s\in[\sigma_\delta,\sigma]$, 
  the inequality follows since $\mob_\delta$ is non-increasing and $\mob$ is non-decreasing on that interval.
  The argument for uniform convergence of $\mob_\delta$ to $\mob$ 
  is established essentially with the same argument as in \eqref{eq:mobdeltaconcavity},
  making again a case distinction whether $s\in[0,\sigma_\delta]$, $s\in[s_\delta,\sigma]$, or $s\in[\sigma,M]$.

  Condition \eqref{Pre1} on $G$ is less stringent for the approximations $\mob_\delta$ 
  since $\mob_\delta\le\mob$ in case $M=+\infty$, and $\mob_\delta/(1+\mob_\delta)\le\mob/(1+\mob)$ in case $M<\infty$.
  Concerning the continuity of $P_\delta$, we remark that, in view of $\mob_\delta\le\mob$,
  the integrability of $\mob(s)G''(s)$ near $s=0$ (and near $s=M$ if $M<\infty$) implies the respective integrability of $\mob_\delta(s)G''(s)$.
  Moreover,
  \begin{align*}
    \sup_{0<s<\bar s} |P(s)-P_\delta(s)| \le \int_0^{\bar s} \big(\mob(s)-\mob_\delta(s)\big)|G''(s)|\d s
  \end{align*}
  in combination with the pointwise convergence of $\mob_\delta$ to $\mob$ 
  implies uniform convergence of $P_\delta$ to $P$ on all intervals $[0,\bar s]$;
  notice that the dominated convergence theorem is applicable since $0\le(\mob-\mob_\delta)|G''|\le \mob |G''|$,
  and the latter is integrable by assumption.
  Finally, if $M=+\infty$, then \eqref{eq:Pdeltabound} is another consequence of \eqref{Pre1}.
  Indeed, on one hand,
  \begin{align*}
    P_\delta(s) = \int_0^s \mob_\delta(r)G''(r)\d r \ge -C \int_0^s(1+\mob(r))\d r \ge -K(1+s^2),
  \end{align*}
  and on the other hand, using also \eqref{eq:mobdeltaconcavity},
  \begin{align*}
    P_\delta(s) = P(s) - \int_0^s \big(\mob(r)-\mob_\delta(r)\big)G''(r)\d r
    &\le P(s) + \delta\int_0^s \big( G''(r)\big)_-\d r \\
    &\le P(s) + C\delta \int_0^s \Big( 1+\frac1{\mob(r)}\Big)\d r \le P(s)+K(1+s).
  \end{align*}
  For $M<\infty$, \eqref{eq:Pdeltabound} simply amounts to $\delta$-uniform boundedness of $P_\delta$,
  which is clear from the uniform convergence to $P$.
\end{proof}
\nodaniel

\subsection{Weak and strong convergence}
%
%
Lemma \ref{lem:thisisfartootechnical} implies that Theorem \ref{thm.main} is applicable 
to the approximate mobilities $\mob_\delta$ for each $\delta$ sufficiently small:
there exist respective solutions $\sol_\delta:[0,+\infty)\to\admdens$ of problem \eqref{eq.formalflow}--\eqref{eq.ic},
i.e.,
\begin{align}
  \label{eq.weakdelta}
  \int_0^{+\infty} \int_\Omega \partial_t\zeta\,\sol_\delta\,\d x\,\d t
  = \int_0^{+\infty}\int_\Omega\Delta\sol_\delta\,\diff\big(\mob_\delta(\sol_\delta)\diff\zeta\big)\,\d x\,\d t
  + \int_0^{+\infty}\int_\Omega P_\delta(\sol_\delta)\Delta\zeta\,\d x\,\d t
\end{align}
for all test functions $\zeta\in C^\infty_c((0,+\infty)\times\overline\Omega)$  such that $\diff \zeta\cdot\nml=0$ on $\partial\Omega$.
We wish to pass to the limit as $\delta\downarrow0$ in \eqref{eq.weakdelta}.
\begin{lemma}
  \label{lem:technical1}
  There exists an absolutely continuous curve $\sol:\hopen\to\admdens$ 
  and a vanishing sequence $\delta_k$ such that the $\sol_{\delta_k}$ converge to $\sol$ 
  weakly in $H^1(\Omega)$ and strongly in $L^2(\Omega)$ pointwise in time,
  as well as strongly in $L^2(0,T;L^2(\Omega))$ for every $T>0$ 
  and pointwise a.e.\ on $(0,+\infty)\times\Omega$.
\end{lemma}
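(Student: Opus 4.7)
The plan is to combine uniform estimates on the approximating solutions $\sol_\delta$ with an Ascoli-Arzel\`a extraction in the metric space $(\admdens,\wass)$, and then upgrade the resulting pointwise-in-time $\wass$-convergence to weak $H^1$ and strong $L^2$ convergence using standard compactness tools.

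First I would gather uniform estimates inherited from applying Theorem \ref{thm.main} to each approximate problem. The energy decay \eqref{eq.energydecay} yields $\energy[\sol_\delta(t)]\le\energy[\mu_0]$ for all $t\ge0$, so \eqref{eq.boundbelow} produces a $\delta$-uniform $H^1(\Omega)$ bound on $\sol_\delta(t)$, while \eqref{eq.entropyest} provides a uniform bound $\altentropy[\sol_\delta(t)]\le C$. The $H^2$-estimate \eqref{eq.h2} furnishes a bound on $\|\sol_\delta\|_{L^2(0,T;H^2(\Omega))}$ that is uniform in $\delta$, provided one checks that the constants in Proposition \ref{prp.strong} stay bounded along $\delta\downarrow 0$, which reduces to verifying that $\mob_\delta(\massOmega)$ remains bounded away from zero (an immediate consequence of the pointwise convergence $\mob_\delta\to\mob$ of Lemma \ref{lem:thisisfartootechnical}). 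Finally, \eqref{eq:27} combined with \eqref{eq.estimate2} gives the uniform $1/2$-H\"older bound
\[
\wass(\sol_\delta(s),\sol_\delta(t))\le \wassdelta(\sol_\delta(s),\sol_\delta(t))\le C|s-t|^{1/2}.
\]

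With these estimates in hand, I would apply Ascoli-Arzel\`a in $(\admdens,\wass)$. By property (b) of Proposition \ref{prp.wm}, the entropy sublevel $\{u\in\admdens:\altentropy[u]\le C\}$ is a compact metric space for $\wass$; since the curves $\sol_\delta$ take values in this sublevel and are equicontinuous, there exist a vanishing sequence $\delta_k$ and a limit curve $\sol:\hopen\to\admdens$ such that $\sol_{\delta_k}(t)\to\sol(t)$ in $\wass$ for every $t\ge 0$, and by continuity of the metric the bound $\wass(\sol(s),\sol(t))\le C|s-t|^{1/2}$ is inherited by $\sol$, hence absolute continuity.

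To upgrade pointwise $\wass$-convergence, I would observe that at each fixed $t$ the sequence $\{\sol_{\delta_k}(t)\}$ is bounded in $H^1(\Omega)$, hence weakly precompact; any weak $H^1$ limit point also converges in $\mathscr{D}'(\Rd)$, which by (a) of Proposition \ref{prp.wm} must agree with the $\wass$-limit $\sol(t)$. Uniqueness of the limit then forces the full sequence $\sol_{\delta_k}(t)$ to converge weakly to $\sol(t)$ in $H^1(\Omega)$, and strongly in $L^2(\Omega)$ by Rellich's theorem. The uniform $L^2$ bound together with dominated convergence in time yields strong convergence in $L^2(0,T;L^2(\Omega))$ for every $T>0$, and a final diagonal subsequence extraction delivers a.e.\ convergence on $(0,+\infty)\times\Omega$. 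The main technical point that requires care is the verification in the first step that the constants in the $H^2$-estimate and the entropy-energy comparison remain uniform along $\delta\downarrow 0$; once this is settled, the remaining steps are routine applications of the tools already assembled in the paper.
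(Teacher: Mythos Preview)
Your proposal is correct and follows essentially the same route as the paper: uniform $1/2$-H\"older continuity in $\wass$ via \eqref{eq:27} and \eqref{eq.estimate2}, an Arzel\`a--Ascoli extraction using compactness of sublevels (the paper invokes energy sublevels rather than entropy sublevels, but either works), and then dominated convergence in time for the $L^2(0,T;L^2(\Omega))$ upgrade. The $H^2$-estimate you collect is not actually needed for this lemma---it is used only later in Lemma~\ref{lem:technical3}---so you can drop that step here.
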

\begin{proof}
  The curves $\sol_\delta$ satisfy estimate \eqref{eq.estimate2} in the respective metric $\wassdelta$ 
  with the global H\"older constant determined by $\energy[\mu_0]$.
  In view of \eqref{eq:27}, the family $(\sol_\delta)_{\delta>0}$ is equi-continuous with respect to $\wass$.
  Moreover, the sublevel sets of the energy $\energy$ are compact.
  The claim on convergence now follows by the variant of the Arzel\'a-Ascoli theorem given in \cite[Proposition 3.3.1]{AmbrGiglSava}.
  An application of the dominated convergence theorem with respect to time provides
  strong convergence of $\sol_\delta$ to $\sol$ also in sense of $L^2(0,T;L^2(\Omega))$,
  and thus (without loss of generality) also pointwise a.e.\ convergence.
\end{proof}
In the following we write $\delta\downarrow0$ to indicate 
``along a suitable vanishing sequence $\delta_k$ for $k\to\infty$''.

\begin{lemma}
  \label{lem:technical3}
  For every $T>0$,
  \begin{align}\label{eq.h2est}
    \limsup_{\delta\downarrow0} \int_0^T \|\sol_\delta(t)\|_{H^2(\Omega)}^2 \,\d t < +\infty.
  \end{align}
  Consequently,
  $\sol_\delta$ converges to $\sol$ weakly in $L^2(0,T;H^2(\Omega))$ and strongly $L^2(0,T;H^1(\Omega))$
  as $\delta\downarrow0$.
  Moreover, $P_\delta(\sol_\delta)$ converges to $P(\sol)$ in $L^1(0,T;L^1(\Omega))$.
\end{lemma}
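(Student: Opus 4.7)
The plan is to deduce the bound \eqref{eq.h2est} directly from Proposition \ref{prp.strong} applied to the approximating problems, check that the controlling constants are $\delta$-uniform, and then obtain the convergence statements by standard weak/strong compactness plus a Vitali-type argument for $P_\delta(\sol_\delta)$.

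First I would note that by Lemma \ref{lem:thisisfartootechnical} the mobility $\mob_\delta$ satisfies \eqref{Mob1} and \eqref{MobL}, and $G$ satisfies \eqref{Pre1} relative to $\mob_\delta$, so Theorem \ref{thm.main} (and in particular Proposition \ref{prp.strong}) applies to the approximate problem. Passing to the limit $\tau\downarrow 0$ in the estimate \eqref{eq.h2} via lower semicontinuity of the $H^2$-norm under weak convergence gives
\begin{equation*}
  \int_0^T\|\sol_\delta(t)\|_{H^2(\Omega)}^2\,\d t\le C_\delta\, T\,(\energyo+\energy[\mu_0]).
\end{equation*}
Tracing back through Lemma \ref{lem.heatestimate} (and Lemma \ref{lem.energyprops}), the constant $C_\delta$ depends only on $\mob_\delta(\mass_\Omega)$, $|\Omega|$, $d$ and $G$; since $s_\delta^1\downarrow 0$ and $s_\delta^2\uparrow M$ (or $s_\delta\downarrow 0$ in the unbounded case), one has $\mob_\delta(\mass_\Omega)\to\mob(\mass_\Omega)>0$, so $C_\delta$ stays bounded as $\delta\downarrow 0$. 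The quantities $\energy[\mu_0]$ and $\energyo$ are manifestly $\delta$-independent, which yields \eqref{eq.h2est}.

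Having \eqref{eq.h2est}, the weak convergence $\sol_\delta\weakto \sol$ in $L^2(0,T;H^2(\Omega))$ follows by combining this uniform bound with the pointwise (in time) convergence established in Lemma \ref{lem:technical1}; the limit must coincide with $\sol$ because $\sol_\delta(t)\weakto \sol(t)$ in $H^1(\Omega)$ for every $t$. Strong convergence in $L^2(0,T;H^1(\Omega))$ then results by a standard interpolation argument between the weak $L^2(0,T;H^2(\Omega))$ bound and the strong $L^2(0,T;L^2(\Omega))$ convergence from Lemma \ref{lem:technical1}.

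For $P_\delta(\sol_\delta)\to P(\sol)$ in $L^1(0,T;L^1(\Omega))$ I would argue via Vitali. The pointwise a.e.\ convergence $\sol_\delta\to \sol$ from Lemma \ref{lem:technical1}, together with the locally uniform convergence $P_\delta\to P$ from Lemma \ref{lem:thisisfartootechnical}, gives $P_\delta(\sol_\delta)\to P(\sol)$ pointwise a.e.\ on $(0,T)\times\Omega$. If $M<\infty$ the sequence $P_\delta(\sol_\delta)$ is uniformly bounded and the result follows from the dominated convergence theorem. If $M=+\infty$, I use the growth estimate \eqref{eq:Pdeltabound} together with the condition $P(s)/(s^q+|G(s)|)\to 0$ from \eqref{Pre1} to write, for every $\eps>0$,
\begin{equation*}
  |P_\delta(s)|\le \eps\bigl(s^q+|G(s)|\bigr)+K_\eps(1+s^2)\quad\foralltext s\ge0,
\end{equation*}
with $K_\eps$ independent of $\delta$. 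The Sobolev embedding $H^2(\Omega)\hookrightarrow L^q(\Omega)$ combined with \eqref{eq.h2est} bounds $\sol_\delta$ in $L^2(0,T;L^q(\Omega))$, while $\int_\Omega G(\sol_\delta)\,\d x\le \energy[\mu_0]+\energyo$ by Lemma \ref{lem.energyprops} and the energy monotonicity \eqref{eq.energydecay}. These bounds supply the equi-integrability needed to apply Vitali's theorem and conclude.

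The only delicate point is precisely this last step: verifying $\delta$-uniform equi-integrability of $P_\delta(\sol_\delta)$ when $M=+\infty$, which hinges on the subcritical growth assumption in \eqref{Pre1} and on the $\delta$-uniform upper bound \eqref{eq:Pdeltabound} from Lemma \ref{lem:thisisfartootechnical}; all the other items are routine consequences of the already established uniform $H^2$ estimate and pointwise convergence.
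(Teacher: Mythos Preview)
Your proposal is correct and follows essentially the same approach as the paper: you trace the constants in Proposition \ref{prp.strong} and Lemma \ref{lem.heatestimate} back to $\mob_\delta(\mass_\Omega)$, observe that this stays bounded away from zero as $\delta\downarrow0$, and then combine the resulting $\delta$-uniform $H^2$ bound with the pointwise convergence from Lemma \ref{lem:technical1} exactly as the paper does. Your Vitali argument for $P_\delta(\sol_\delta)$ is a spelled-out version of the paper's ``argue as in Corollary \ref{cor.mobpressure}'', using precisely the $\delta$-uniform growth bound \eqref{eq:Pdeltabound} and the subcritical growth condition in \eqref{Pre1}.
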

\begin{proof}
  Define the $\delta$-approximations of the entropy functional $\entropy_\delta$ as in \eqref{eq.entropy}
  with $\Mob_\delta$ instead of $\Mob$.
  Then estimate \eqref{eq.entropyest} holds with a constant $C$ independent of $\delta$ for every $\entropy_\delta$,
  at least for all $\delta>0$ sufficiently small.
  Indeed, observe that $\Mob_\delta(s_0)\geq \frac12\Mob(s_0)$ if $\delta$ is small enough,
  and hence inequality \eqref{eq.mbelow} follows.
  In the same way, inequalities \eqref{eq.heatestimate} and \eqref{eq.h2tau}
  can be obtained with $\delta$-independent constants $C$.
  In combination, \eqref{eq.h2est} follows.

  The stated weak convergence of $\sol_\delta$ is now a consequence of Alaoglu's theorem and the uniqueness of the weak limit.
  The strong convergence is obtained by interpolation of the strong convergence in $L^2(0,T;L^2(\Omega))$ with the bound \eqref{eq.h2est}.
  To prove convergence of $P_\delta(\sol_\delta)$, we argue as in Corollary \ref{cor.mobpressure}.
\end{proof}
%

\subsection{Convergence of the mobility gradient}
%
The next goal is to establish convergence of $\mob_\delta(\sol_\delta)$ in $L^2(0,T;H^1(\Omega))$.
\begin{lemma}
  $\mob_\delta(\sol_\delta)$ converges to $\mob(\sol)$ strongly in $L^2(\Omega_T)$ for every $T>0$ as $\delta\downarrow0$.
\end{lemma}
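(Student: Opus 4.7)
The plan is to reduce the claim to the Vitali convergence theorem by combining pointwise a.e.\ convergence of $\mob_\delta(\sol_\delta)$ with equi-integrability of $|\mob_\delta(\sol_\delta)|^2$ on $\Omega_T$. The a.e.\ convergence is immediate from what is already available: Lemma~\ref{lem:technical1} supplies $\sol_\delta\to\sol$ a.e.\ on $\Omega_T$ along the chosen subsequence, Lemma~\ref{lem:thisisfartootechnical} gives global uniform convergence $\mob_\delta\to\mob$, and $\mob$ is continuous, so $\mob_\delta(\sol_\delta)\to\mob(\sol)$ a.e.\ on $\Omega_T$.

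In the case $M<\infty$, the $\mob_\delta$ are all dominated by $\max_{[0,M]}\mob<+\infty$ and $|\Omega_T|<+\infty$, so the dominated convergence theorem immediately yields strong convergence in $L^2(\Omega_T)$.

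In the case $M=+\infty$, concavity of $\mob$ with $\mob(0)=0$ makes $s\mapsto\mob(s)/s$ nonincreasing on $(0,\infty)$, which yields the sublinear bound $\mob_\delta(s)\le\mob(s)\le K(1+s)$ with $K$ independent of $\delta$; consequently
\begin{equation*}
|\mob_\delta(\sol_\delta)|^2 \le 2K^2(1+\sol_\delta^2),
\end{equation*}
so the required equi-integrability reduces to equi-integrability of $\sol_\delta^2$ on $\Omega_T$. I will obtain this from a higher-integrability bound: Lemma~\ref{lem:technical3} provides a $\delta$-uniform bound on $\sol_\delta$ in $L^2(0,T;H^2(\Omega))$, while \eqref{eq.energydecay} combined with \eqref{eq.boundbelow} supplies a $\delta$-uniform bound in $L^\infty(0,T;H^1(\Omega))$; arguing exactly as in the derivation of \eqref{eq:26} (Sobolev embedding and interpolation), these bounds give a $\delta$-uniform bound on $\sol_\delta$ in $L^q(\Omega_T)$ with the exponent $q>2$ of \eqref{Pre1}. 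Hence $|\mob_\delta(\sol_\delta)|^2$ is uniformly bounded in $L^{q/2}(\Omega_T)$ with $q/2>1$, which provides the equi-integrability required by Vitali's theorem, and the proof concludes.

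The only subtle point sits in the $M=+\infty$ case: because the range of $\sol_\delta$ is a priori unbounded, no fixed $L^2$ majorant is available and dominated convergence must be replaced by Vitali, with the strictly super-$L^2$ integrability of $\sol_\delta$ inherited from the $H^2$-bound of Lemma~\ref{lem:technical3} doing the essential work.
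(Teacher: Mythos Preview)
Your proof is correct and follows essentially the same strategy as the paper: pointwise a.e.\ convergence of $\mob_\delta(\sol_\delta)$ from uniform convergence of $\mob_\delta$ and a.e.\ convergence of $\sol_\delta$, then dominated convergence for $M<\infty$ and Vitali for $M=+\infty$ via the sublinear bound $\mob_\delta(s)\le\mob(s)\le C(1+s)$.

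The one place you take a longer detour is in justifying equi-integrability of $\sol_\delta^2$ when $M=+\infty$. You invoke the $L^2(0,T;H^2)$ bound of Lemma~\ref{lem:technical3} together with the energy bound to get a uniform $L^q(\Omega_T)$ estimate with $q>2$, and then deduce equi-integrability. The paper instead observes that Lemma~\ref{lem:technical1} already gives \emph{strong} convergence of $\sol_\delta$ in $L^2(\Omega_T)$, and a strongly convergent sequence in $L^2$ automatically has equi-integrable squares. Your route works, but the paper's is shorter and does not require Lemma~\ref{lem:technical3} at this point.
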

\begin{proof}
  The uniform convergence of the mobility functions $\mob_\delta$ to $\mob$,
  and the pointwise a.e.\ convergence of $\sol_\delta$ to $\sol$ suffice to conclude
  pointwise a.e.\ convergence of $\mob_\delta(\sol_\delta)$ to $\mob(\sol)$ on $\Omega_T$.
  Moreover, if $M<\infty$, then $\Mob_\delta(\sol_\delta)$ is $\delta$-independently bounded,
  and by dominated convergence it follows that $\mob_\delta(\sol_\delta)$ converges strongly to $\mob(\sol)$ in $L^2(\Omega_T)$.
  In the case $M=+\infty$, the $\delta$-uniform bound $\Mob_\delta(s)\leq\Mob(s)\leq C(1+s)$
  and the strong convergence of $\sol_\delta$ in $L^2(\Omega_T)$
  imply equi-integrablity of $|\mob_\delta(\sol_\delta)|^2$ in $\Omega_T$.
  We invoke Vitali's theorem to conclude the proof.
\end{proof}
For the proof of convergence of the gradients $\diff\mob_\delta(\sol_\delta)$ in $L^2(\Omega_T)$,
we distinguish the cases $M<\infty$ and $M=\infty$.
%
\begin{lemma}
  \label{lem:supertechnical}
  Assume $M<\infty$.
  Define $g:[0,M]\to\setR$ by $g_\delta(s):=\sqrt{s(M-s)}\mob_\delta'(s)$ for all $\delta>0$ sufficiently small.
  For $\delta\downarrow0$, the $g_\delta$ converge uniformly 
  to the continuous function $g_0:[0,M]\to\setR$ given by $g_0(s)=\sqrt{s(M-s)}\mob'(s)$ for all $s\in(0,M)$, and $g_0(0)=g_0(M)=0$.
\end{lemma}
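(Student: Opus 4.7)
The plan is to use the explicit formula for $\mob_\delta$ to reduce everything to a uniform convergence problem for $\mob'$ on $(0,M)$, separating the analysis into an interior region (where standard pointwise limits suffice) and two boundary layers near $0$ and $M$ (where assumption \eqref{Mob4} controls both $g_\delta$ and $g_0$ simultaneously).

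First I would set up notation. Write $\alpha_\delta := (s^2_\delta - s^1_\delta)/M$ and $T_\delta(s) := \alpha_\delta s + s^1_\delta$, so that $\mob_\delta'(s) = \alpha_\delta\,\mob'(T_\delta(s))$ for every $s\in(0,M)$. Since $s^1_\delta\downarrow 0$ and $s^2_\delta\uparrow M$ as $\delta\downarrow 0$, one has $\alpha_\delta\to 1$ and $T_\delta\to \operatorname{id}$ uniformly on $[0,M]$. Using \eqref{Mob4}, the function $g_0$ extends continuously to the endpoints with $g_0(0)=g_0(M)=0$; using that $\mob'$ is finite on any compact subset of $(0,M)$, $g_\delta$ also extends continuously with $g_\delta(0)=g_\delta(M)=0$.

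In the interior, fix $\rho\in(0,M/4)$. Since $T_\delta(s)\to s$ uniformly on $[0,M]$, for $\delta$ small enough we have $T_\delta(s)\in[\rho/2,M-\rho/2]$ whenever $s\in[\rho,M-\rho]$, and on this subinterval $\mob'$ is uniformly continuous. Combining with $\alpha_\delta\to 1$ and the boundedness of $\sqrt{s(M-s)}$, this gives $g_\delta\to g_0$ uniformly on $[\rho,M-\rho]$.

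The more delicate step is the boundary layers: given $\eps>0$, by \eqref{Mob4} I pick $\rho>0$ so small that $|g_0(s)|<\eps$ on $[0,2\rho]\cup[M-2\rho,M]$ and simultaneously $\sqrt{r}\,|\mob'(r)|<\eps$ on $(0,4\rho]$, with the symmetric bound near $M$. For $s\in[0,\rho]$ and $\delta$ small enough that $s^1_\delta\le\rho$, one has $T_\delta(s)\le 2\rho$, and the key estimate is
\begin{equation*}
|g_\delta(s)| \le \sqrt{M}\,\alpha_\delta\,\sqrt{s}\,|\mob'(T_\delta(s))|
\le \sqrt{M}\,\alpha_\delta\,\sqrt{T_\delta(s)/\alpha_\delta}\,|\mob'(T_\delta(s))|
\le \sqrt{M}\,\eps,
\end{equation*}
where I used $s\le T_\delta(s)/\alpha_\delta$. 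The symmetric estimate on $[M-\rho,M]$ uses the other half of \eqref{Mob4} and the bound $M-s\le (M-T_\delta(s))/\alpha_\delta$. Together with the interior convergence, this yields $\|g_\delta-g_0\|_{L^\infty([0,M])}\to 0$.

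The only nonroutine step is the boundary estimate: one must carefully exploit the comparison $s\le T_\delta(s)/\alpha_\delta$ to transfer the smallness provided by \eqref{Mob4} at $T_\delta(s)$ into smallness of the weight $\sqrt{s}$ actually appearing in $g_\delta(s)$. Everything else is continuity and elementary manipulations.
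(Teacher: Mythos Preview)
Your proof is correct and follows the same overall strategy as the paper: split $[0,M]$ into an interior region where $\mob_\delta'\to\mob'$ uniformly by smoothness, and boundary layers where both $g_\delta$ and $g_0$ are shown to be uniformly small via \eqref{Mob4}. The difference lies in the boundary estimate. The paper exploits that $\mob'$ is non-increasing (concavity of $\mob$) to argue that $|\mob_\delta'(s)|\le|\mob'(s)|$ on the boundary layers, hence $|g_\delta(s)|\le|g_0(s)|<\epsilon$ directly. You instead use the algebraic comparison $s\le T_\delta(s)/\alpha_\delta$ (and its mirror $M-s\le(M-T_\delta(s))/\alpha_\delta$) to transfer the weight $\sqrt{s}$ to $\sqrt{T_\delta(s)}$, so that the smallness hypothesis \eqref{Mob4} applies at the shifted point $T_\delta(s)$. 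Your route is slightly more explicit and avoids having to track where the fixed point of $T_\delta$ sits relative to the boundary layer; the paper's route is shorter but leans more heavily on the concavity of $\mob$.
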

\begin{proof}
  Let $\epsilon>0$ be given.
  Since $\sqrt{s(M-s)}\mob'(s)\to0$ for $s\downarrow0$ and for $s\uparrow M$, respectively, by hypothesis \eqref{Mob4},
  and since $\mob'(s)$ is an non-increasing function on $(0,M)$,
  there are $\sigma^1_\epsilon<\sigma^2_\epsilon$ such that $|g_\delta(s)|\le\sqrt{s(M-s)}|\mob'(s)|<\epsilon$ for all $s\in[0,\sigma^1_\epsilon]\cup[\sigma^2_\epsilon,M]$,
  and all $\delta>0$ sufficiently small.
  For $\delta\downarrow0$, the points $s^1_\delta$ and $s^2_\delta$ in the definition of $\mob_\delta$ converge to $0$ and to $M$, respectively.
  By smoothness of $\mob$, one thus has local uniform convergence of $\mob_\delta'$ to $\mob'$,
  and consequently $|g_\delta(s)-g_0(s)|<\epsilon$ for all $s\in[\sigma^1_\epsilon,\sigma^2_\epsilon]$, for sufficiently small $\delta>0$.
  The uniform convergence of the $g_\delta$ also proves continuity of $g_0$.
\end{proof}
\nodaniel
\begin{lemma}
  \label{lem:technical2a}
  Assume $M<\infty$.
  Then $\diff\mob_\delta(\sol_\delta)$ converges to $\diff\mob(\sol)$ strongly in $L^2(\Omega_T)$ for every $T>0$ as $\delta\downarrow0$.
\end{lemma}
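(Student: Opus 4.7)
The plan is to combine a uniform $L^4(\Omega_T)$ bound on $\diff\mob_\delta(\sol_\delta)$, obtained from the partial-fraction identity and Lions-Villani, with a splitting argument that treats separately a bulk region where $\sol$ is bounded away from $0$ and $M$, and a boundary region where the key factor $g_0(\sol_\delta)$ is small thanks to Lemma~\ref{lem:supertechnical}.

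First, I would derive the pointwise identity
\[
  |\diff\mob_\delta(\sol_\delta)|^2 = \frac{g_\delta(\sol_\delta)^2}{\sol_\delta(M-\sol_\delta)}|\diff\sol_\delta|^2
  = \frac{4 g_\delta(\sol_\delta)^2}{M}\big(|\diff\sqrt{\sol_\delta}|^2 + |\diff\sqrt{M-\sol_\delta}|^2\big),
\]
exploiting $\frac1{s(M-s)} = \frac1M(\frac1s+\frac1{M-s})$ together with the Sobolev chain rule $\diff\sol_\delta = 2\sqrt{\sol_\delta}\,\diff\sqrt{\sol_\delta}$. By Lemma~\ref{lem:supertechnical}, $\|g_\delta\|_{L^\infty(0,M)}$ is uniformly bounded. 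Coupling this with the Lions-Villani estimate (Lemma~\ref{lem.Sobolev}) applied to both $\sol_\delta$ and $M-\sol_\delta$, and with the uniform $L^2(0,T;H^2(\Omega))$ bound from Lemma~\ref{lem:technical3}, yields a uniform $L^4(\Omega_T)$ bound on $\diff\mob_\delta(\sol_\delta)$; in particular $\{|\diff\mob_\delta(\sol_\delta)|^2\}_\delta$ is equi-integrable in $L^1(\Omega_T)$. Combined with the already-established strong $L^2$ convergence $\mob_\delta(\sol_\delta) \to \mob(\sol)$ from the previous lemma, this forces $\mob(\sol) \in L^2(0,T;H^1(\Omega))$ and $\diff\mob_\delta(\sol_\delta) \weakto \diff\mob(\sol)$ weakly in $L^2(\Omega_T)$.

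Next, for $\eta\in(0,M/2)$ I split $\Omega_T = A_\eta \cup B_\eta$ with $A_\eta := \{(t,x): \eta<\sol<M-\eta\}$. On $A_\eta$, extracting a subsequence along which $\sol_\delta \to \sol$ and $\diff\sol_\delta \to \diff\sol$ a.e. (using strong $L^2(0,T;H^1)$ convergence from Lemma~\ref{lem:technical3}), the uniform convergence of $\mob_\delta \to \mob$ on every compact subset of $(0,M)$ gives $\mob'_\delta(\sol_\delta) \to \mob'(\sol)$ a.e.\ on $A_\eta$. Since $|\mob'_\delta(\sol_\delta)|$ is eventually bounded by $C_\eta := 1+\sup_{[\eta/2,M-\eta/2]}|\mob'|$ on the subset of $A_\eta$ where $\sol_\delta \in [\eta/2,M-\eta/2]$ (whose complement in $A_\eta$ has measure $O(\eta^{-2}\|\sol_\delta-\sol\|_{L^2}^2) \to 0$ by Chebyshev), dominated convergence yields strong $L^2(A_\eta)$ convergence $\diff\mob_\delta(\sol_\delta) \to \diff\mob(\sol)$. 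On $B_\eta$, the uniform $L^4$ bound handles the small set $B_\eta\cap\{\sol_\delta \in [\eta/2,M-\eta/2]\}$ (negligible by Cauchy-Schwarz), while on its complement $|g_\delta(\sol_\delta)| \le \sup_{[0,\eta/2]\cup[M-\eta/2,M]}|g_\delta|$ which tends to $\sup_{[0,\eta/2]\cup[M-\eta/2,M]}|g_0|$ as $\delta\to0$ and vanishes as $\eta\to0$ thanks to Lemma~\ref{lem:supertechnical} and $g_0(0)=g_0(M)=0$; plugged into the identity above this yields $\|\diff\mob_\delta(\sol_\delta)\|_{L^2(B_\eta)} \to 0$ as $\delta\to0$, then $\eta\to0$. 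Finally, $\diff\mob(\sol)=0$ a.e.\ on $Z^0\cup Z^M$ since $\mob(\sol)$ is constant there, and $B_\eta \searrow Z^0\cup Z^M$; dominated convergence gives $\|\diff\mob(\sol)\|_{L^2(B_\eta)} \to 0$. Assembling the three pieces proves the claim.

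The main obstacle is controlling the boundary region $B_\eta$: the derivative $\mob_\delta'$ may blow up near $0$ and $M$ uniformly in $\delta$, so an ordinary dominated-convergence argument is unavailable. The resolution is the partial-fraction identity, which trades the singular factor $\mob_\delta'(\sol_\delta)\diff\sol_\delta$ for the product of the bounded, vanishing-at-the-endpoints quantity $g_\delta(\sol_\delta)$ and the $L^4$-controlled gradients of $\sqrt{\sol_\delta}$ and $\sqrt{M-\sol_\delta}$; this is exactly the role played by hypothesis \eqref{Mob4} via Lemma~\ref{lem:supertechnical}.
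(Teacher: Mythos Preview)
Your proposal is correct and rests on the same two pillars as the paper: the partial-fraction identity expressing $|\diff\mob_\delta(\sol_\delta)|^2$ as $g_\delta(\sol_\delta)^2$ times the square-root gradients, and the Lions--Villani $L^4$ control of the latter via the uniform $H^2$ bound. The difference is in the bookkeeping of the splitting.

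The paper avoids your $\eta$-layer entirely: it splits $\Omega_T$ directly into $Z_T:=\{\sol\in\{0,M\}\}$ and its complement. On $Z_T$ one has $g_\delta(\sol_\delta)\to g_0(\sol)=0$ in $L^4$, so the Cauchy--Schwarz version of your identity immediately gives $\diff\mob_\delta(\sol_\delta)\to0$ in $L^2(Z_T)$. On $\Omega_T\setminus Z_T$ one has pointwise a.e.\ convergence $\mob_\delta'(\sol_\delta)\diff\sol_\delta\to\mob'(\sol)\diff\sol$ together with equi-integrability of $|\diff\mob_\delta(\sol_\delta)|^2$ (which you also establish, via your $L^4$ bound), and Vitali's theorem finishes the job in one stroke. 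This makes your Chebyshev detour and the double limit $\delta\to0$, $\eta\to0$ unnecessary. Note also that what you call ``dominated convergence'' on $A_\eta$ is really Vitali's theorem: there is no fixed dominating function for $|\mob_\delta'(\sol_\delta)\diff\sol_\delta|^2$, but equi-integrability plus a.e.\ convergence suffice --- and once you invoke Vitali, the good/bad decomposition of $A_\eta$ becomes redundant.
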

\begin{proof}
  Observe that
  \begin{align}
    \label{eq.rootA}
    |\diff\mob_\delta(\sol_\delta(t))|^2 =  (g_\delta(\sol_\delta))^2\frac{1}{M}
    \big(|\diff\sqrt{\sol_\delta(t)}|^2+|\diff\sqrt{M-\sol_\delta(t)}|^2\big)
  \end{align}
  for every $t\geq0$ at which $\sol_\delta(t)\in H^2(\Omega)$.
  Pointwise convergence of $\sol_\delta$ to $\sol$ almost everywhere on $\Omega_T$
  and uniform convergence of $g_\delta$ to $g_0$ imply
  pointwise convergence of the compositions $g_\delta(\sol_\delta)$ to $g_0(\sol)$ almost everywhere.
  In combination with the $\delta$-uniform boundedness of $g_\delta$
  it follows in particular that $g_\delta(\sol_\delta)\to g_0(\sol)$ in $L^4(\Omega_T)$.

  Now let $Q\subset\Omega_T$ be a measurable subset.
  From \eqref{eq.rootA} we obtain
  \begin{align}
    \nonumber
    \int_Q |\diff\mob_\delta(\sol_\delta)|^2\,\d x\,\d t
    & = \frac{1}{M}\int_Q \big|g_\delta(\sol_\delta)\big|^2 \big|\diff\sqrt{\sol_\delta}\big|^2 \,\d x\,\d t
    +\frac{1}{M}\int_Q \big|g_\delta(\sol_\delta)\big|^2 \big|\diff\sqrt{M-\sol_\delta}\big|^2 \,\d x\,\d t\\
    \label{eq.diffmob1}
    & \leq \frac{1}{M} \bigg( \int_0^T \|\diff\sqrt{\sol_\delta(t)}\|_{L^4}^4 \,\d t \bigg)^{1/2}
    \bigg( \int_Q g_\delta(\sol_\delta)^4\,\d x\,\d t \bigg)^{1/2} \\
    \nonumber & + \frac{1}{M} \bigg( \int_0^T \|\diff\sqrt{M-\sol_\delta(t)}\|_{L^4}^4 \,\d t \bigg)^{1/2}
    \bigg( \int_Q g_\delta(\sol_\delta)^4\,\d x\,\d t \bigg)^{1/2}.
  \end{align}
  %
  Choose $Q=Z_T:=\{(t,x)\in\Omega_T:\sol(t,x)=0\text{ or }\sol(t,x)=M\}$.
  Since $g_\delta(u_\delta)\to g_0(u)=0$ in $L^4(Z_T)$,
  the right-hand side of \eqref{eq.diffmob1} vanishes as $\delta\downarrow0$,
  and $\diff\mob_\delta(\sol_\delta)\to0$ in $L^2(Z_T)$.

  Further, since the $Q$ in inequality \eqref{eq.diffmob1} can be chosen as any arbitrary subset of $\Omega_T\setminus Z_T$,
  the equi-integrability of $|g_\delta(\sol_\delta)|^4$ on $\Omega_T$ is inherited to $|\diff\mob_\delta(\sol_\delta)|^2$ on $\Omega_T\setminus Z_T$.
  On $\Omega_T\setminus Z_T$, the composition $\mob'(\sol)$ is everywhere finite,
  and $\mob_\delta'(\sol_\delta)\to\mob'(\sol)$ pointwise a.e.
  In combination with the pointwise a.e.\ convergence of $\diff\sol_\delta$,
  it follows that
  \begin{align*}
    \diff\mob_\delta(\sol_\delta) = \mob_\delta'(\sol_\delta)\diff\sol_\delta \to \mob'(\sol)\diff\sol=\diff\mob(\sol)
  \end{align*}
  strongly in $L^2(\Omega_T\setminus Z_T)$, invoking Vitali's theorem once again.

  The proof of strong convergence $\mob_\delta(\sol_\delta)\to\mob(\sol)$ in $L^2(0,T;H^1(\Omega))$
  is now concluded by observing the following.
  Since $\mob_\delta(\sol_\delta)\to\mob(\sol)$ in $L^2(\Omega_T)$,
  the strong $L^2(\Omega_T)$-limit of $\diff\mob_\delta(\sol_\delta)$ coincides
  with the (uniquely determined) distributional derivative $\diff\mob(\sol)$.
  In particular, $\diff\mob(\sol)$ vanishes a.e. on the set $Z_T$.
\end{proof}
\begin{lemma}
  \label{lem:technical2b}
  Assume $M=+\infty$.
  Then $\diff\mob_\delta(\sol_\delta)$ converges to $\diff\mob(\sol)$ strongly in $L^2(\Omega_T)$ for every $T>0$ as $\delta\downarrow0$.  
\end{lemma}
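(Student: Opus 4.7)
The plan is to adapt the argument of Lemma \ref{lem:technical2a}, the principal new difficulty being the unboundedness of the densities $\sol_\delta$ in the present regime $M=+\infty$. Following the strategy for $M<\infty$, I would introduce the auxiliary functions $g_\delta(s):=\sqrt{s}\,\mob_\delta'(s)$ for $s>0$, with $g_\delta(0):=0$, and similarly $g_0(s):=\sqrt{s}\,\mob'(s)$ for $s>0$, $g_0(0):=0$; the continuity of $g_0$ at the origin is exactly the content of \eqref{Mob4}. Using $\mob_\delta'(s)=\mob'(s+s_\delta)$, the monotonicity of $\mob'$ (from concavity of $\mob$) yields $0\leq g_\delta(s)\leq g_0(s+s_\delta)$, and a straightforward argument parallel to Lemma \ref{lem:supertechnical} shows that $g_\delta\to g_0$ uniformly on every compact subinterval of $[0,+\infty)$ as $\delta\downarrow0$.

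The key identity, valid for every $t$ at which $\sol_\delta(t)\in H^2(\Omega)$, reads
\begin{equation*}
  |\diff\mob_\delta(\sol_\delta(t))|^2 = 4\,(g_\delta(\sol_\delta(t)))^2\,|\diff\sqrt{\sol_\delta(t)}|^2.
\end{equation*}
Combining the Lions--Villani estimate \eqref{eq.villani0} with the uniform $H^2$-bound \eqref{eq.h2est} ensures that $\int_0^T\|\diff\sqrt{\sol_\delta(t)}\|_{L^4}^4\,\d t$ stays bounded as $\delta\downarrow0$, so Cauchy--Schwarz gives, for any measurable $Q\subset\Omega_T$,
\begin{equation*}
  \int_Q|\diff\mob_\delta(\sol_\delta)|^2\,\d x\,\d t \leq C\left(\int_Q g_\delta(\sol_\delta)^4\,\d x\,\d t\right)^{1/2}.
\end{equation*}
Since $\mob$ is concave with $\mob(0)=0$, the ratio $\mob(s)/s$ is non-increasing, so $\mob'$ is bounded on $[1,+\infty)$; together with the continuity of $g_0$ on $[0,1]$ this yields $g_0(s)\leq C(1+\sqrt{s})$ globally, and through the inequality $g_\delta(s)\leq g_0(s+s_\delta)$ one obtains $g_\delta(s)^4\leq C(1+s^2)$ uniformly for small $\delta>0$. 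Because $\sol_\delta$ is bounded in $L^\infty(0,T;H^1(\Omega))$ by \eqref{eq.estimate1}, Sobolev embedding provides a $\delta$-uniform bound on $\sol_\delta$ in $L^r(\Omega_T)$ for some $r>2$, which yields equi-integrability of $\sol_\delta^2$, and hence of $g_\delta(\sol_\delta)^4$, on $\Omega_T$.

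With these ingredients, the endgame mirrors Lemma \ref{lem:technical2a}. On the set $Z_T:=\{(t,x)\in\Omega_T:\sol(t,x)=0\}$, the pointwise a.e.\ convergence $\sol_\delta\to0$ combined with $g_\delta(\sol_\delta)\leq g_0(\sol_\delta+s_\delta)$ and the continuity of $g_0$ at $0$ gives $g_\delta(\sol_\delta)\to0$ pointwise a.e.\ on $Z_T$; equi-integrability of $g_\delta(\sol_\delta)^4$ then upgrades this to convergence in $L^4(Z_T)$ via Vitali, whence $\diff\mob_\delta(\sol_\delta)\to 0$ strongly in $L^2(Z_T)$. On the complement $\Omega_T\setminus Z_T$ the pointwise a.e.\ convergences $\sol_\delta\to\sol>0$ and $\diff\sol_\delta\to\diff\sol$, together with the local uniform convergence $\mob_\delta'\to\mob'$ on compact subsets of $(0,+\infty)$, give $\diff\mob_\delta(\sol_\delta)\to\diff\mob(\sol)$ pointwise a.e., and one more application of Vitali (with the equi-integrability of $|\diff\mob_\delta(\sol_\delta)|^2$ already established) promotes this to strong $L^2(\Omega_T\setminus Z_T)$ convergence. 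Since $\mob_\delta(\sol_\delta)\to\mob(\sol)$ strongly in $L^2(\Omega_T)$ by the preceding lemma, the distributional gradient $\diff\mob(\sol)$ is the only possible $L^2$-limit and must in particular vanish a.e.\ on $Z_T$. The main technical hurdle is exactly the unboundedness of $\sol_\delta$, overcome through the growth estimate $g_\delta(s)^4\leq C(1+s^2)$ coupled with the $L^r(\Omega_T)$-bound for some $r>2$.
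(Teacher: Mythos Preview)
Your proposal is correct and follows essentially the same route as the paper: the same auxiliary functions $g_\delta(s)=\sqrt{s}\,\mob_\delta'(s)$, the same factorization identity, the same Cauchy--Schwarz estimate against $\|\diff\sqrt{\sol_\delta}\|_{L^4}^4$, and the same $Z_T$/complement split with Vitali. The only noteworthy difference is in the justification of equi-integrability of $g_\delta(\sol_\delta)^4$: the paper deduces it directly from the already established strong $L^2(\Omega_T)$-convergence of $\sol_\delta$ (which yields equi-integrability of $\sol_\delta^2$), whereas you take the slightly longer route via the $L^\infty(0,T;H^1)$ bound and Sobolev embedding to get an $L^r(\Omega_T)$ bound with $r>2$; both arguments are valid and yield the same conclusion.
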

\begin{proof}
  Similar as in the proof of Lemma \ref{lem:technical2a},
  define $g_\delta:\hopen\to\setR$ by $g_\delta(s):=\sqrt{s}\mob_\delta'(s)$ for all $\delta>0$ small enough.
  Analogously to Lemma \ref{lem:supertechnical},
  one obtains convergence --- uniformly on every interval $[0;\bar s]$ --- of $g_\delta$ to the continuous function $g_0:\hopen\to\setR$
  with $g_0(s):=\sqrt{s}\mob'(s)$ for $s\in(0,+\infty)$, and $g_0(0)=0$.
  Like in \eqref{eq.rootA}, we observe that
  \begin{align}
    \label{eq.rootB}
    |\diff\mob_\delta(\sol_\delta(t))|^2 =  4(g_\delta(\sol_\delta))^2|\diff\sqrt{\sol_\delta(t)}|^2
  \end{align}
  for every $t\geq0$ at which $\sol_\delta(t)\in H^2(\Omega)$.
  As before, we conclude that $g_\delta(\sol_\delta)$ converges to $g_0(\sol)$ almost everywhere on $(0,+\infty)\times\Omega$.
  Moreover, by construction of the $\mob_\delta$ and \eqref{Mob4},
  one has $0\leq g_\delta(s)\leq C(1+s^{1/2})$ for all $s\geq0$.
  Since $|\sol_\delta|^2$ is equi-integrable in $\Omega_T$ for arbitrary $T>0$,
  the compositions $|g_\delta(\sol_\delta)|^4$ are also equi-integrable in $\Omega_T$.
  By Vitali's Theorem, it follows that $g_\delta(\sol_\delta)\to g_0(\sol)$ in $L^4(\Omega_T)$.
  
  From \eqref{eq.rootB},
  we conclude that
  \begin{align}
    \nonumber
    \int_Q |\diff\mob_\delta(\sol_\delta)|^2\,\d x\,\d t
    & \leq 4 \int_Q \big|g_\delta(\sol_\delta)\big|^2 \big|\diff\sqrt{\sol_\delta}\big|^2 \,\d x\,\d t \\
    \label{eq.diffmob}
    & \leq 4 \bigg( \int_0^T \|\diff\sqrt{\sol_\delta(t)}\|_{L^4}^4 \,\d t \bigg)^{1/2}
    \bigg( \int_Q g_\delta(\sol_\delta)^4\,\d x\,\d t \bigg)^{1/2}
  \end{align}
  holds for any measurable set $Q\subset\Omega_T$.
  From this point on, the proof is identical to the one for Lemma \ref{lem:technical2a},
  with the only change that $Z_T:=\{(t,x)\in\Omega_T:\sol(t,x)=0\}$.
\end{proof}

\subsection{End of the proof}
We are now able to pass to the limit in the weak formulation \eqref{eq.weakdelta}.
By Lemma \ref{lem:technical1}, 
$\Delta\sol_\delta$ converges to $\Delta\sol$ weakly in $L^2(\Omega_T)$ as $\delta\downarrow0$,
and by Lemma \ref{lem:technical2a} or \ref{lem:technical2b}, respectively, 
$\diff\mob_\delta(\sol_\delta)$ converges to $\diff\mob(\sol)$ strongly in that space.
Hence, the term inside the first integral on the right-hand side of \eqref{eq.weakdelta} 
converges weakly to the desired limit in $L^1(\Omega_T)$.
Convergence of the second integral follows from the strong convergence of $P_\delta(\sol_\delta)$ to $P(\sol)$ in $L^1(\Omega_T)$
stated in Lemma \ref{lem:technical3}.
This finishes the proof of Theorem \ref{thm.main2}.
\nodaniel

\begin{appendix}
  \section{Appendix}
  We recall a Sobolev like inequality, that will be useful in order to
  estimate the rate of dissipation of the energy $\energy$ along the heat flow.
  \begin{lemma}\label{lem.Sobolev}
    Assume that $\Omega$ is a smooth \EEE convex open set
    and that $u\in H^2(\Omega)$ satisfies homogeneous Neumann boundary conditions, $\nml\cdot\diff u=0$ on $\partial\Omega$.
    Then
    \begin{align}
      \label{eq.laplace}
      \int_\Omega \|\diff^2u\|^2\,\d x \leq \int_\Omega (\Delta u)^2\,\d x \leq d \int_\Omega \|\diff^2u\|^2\,\d x.
    \end{align}
    If, in addition, $u$ is non-negative,
    then $\sqrt{u}\in W^{1,4}(\Omega)$ and
    \begin{align}
      \label{eq.villani}
      16 \int_\Omega \big|\diff\sqrt{u}\big|^4\,\d x \leq (d+8) \int_\Omega (\Delta u)^2\,\d x .
    \end{align}
  \end{lemma}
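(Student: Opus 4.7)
\textbf{Proof plan for Lemma \ref{lem.Sobolev}.} The right-hand inequality in \eqref{eq.laplace} is the pointwise Cauchy--Schwarz bound applied to $\Delta u=\operatorname{tr}(\diff^2 u)=\sum_i(\diff^2 u)_{ii}$, followed by integration. For the left-hand inequality my plan is: by standard density (and the regularity of the Neumann Laplacian on a smooth convex domain) I reduce to a smooth $u$ with $\nml\cdot\diff u=0$ on $\partial\Omega$, and integrate by parts twice. The first integration by parts eliminates the boundary term thanks to the Neumann condition, and the second one yields
\begin{equation*}
\int_\Omega(\Delta u)^2\,\d x=\int_\Omega\|\diff^2u\|^2\,\d x-\int_{\partial\Omega}(\diff^2u)(\diff u,\nml)\,\d\mathcal{H}^{d-1}.
\end{equation*}
The key observation is that tangential differentiation of the identity $\diff u\cdot\nml\equiv 0$ on $\partial\Omega$ gives $(\diff^2u)(\tau,\nml)=-\diff u\cdot\partial_\tau\nml$ for every tangent vector $\tau$; since $\diff u$ itself is tangential on $\partial\Omega$, choosing $\tau=\diff u$ produces $(\diff^2u)(\diff u,\nml)=-II(\diff u,\diff u)\le 0$, where $II$ is the second fundamental form of $\partial\Omega$, which is non-negative thanks to the convexity of $\Omega$. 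Hence the boundary integral is non-negative and the asserted lower bound follows.

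For the square-root estimate \eqref{eq.villani}, it suffices by the approximation $u\leadsto u+\eps$ and Fatou's lemma to establish it when $u>0$ is smooth and still satisfies the homogeneous Neumann condition; in this regime $|\diff\sqrt u|^2=|\diff u|^2/(4u)$ is smooth. Applying the divergence theorem to the field $|\diff u|^2\diff u/u$ (whose normal trace on $\partial\Omega$ vanishes) yields
\begin{equation*}
\int_\Omega\frac{|\diff u|^4}{u^2}\,\d x=2\int_\Omega\frac{\diff u\cdot\diff^2u\cdot\diff u}{u}\,\d x+\int_\Omega\frac{|\diff u|^2\,\Delta u}{u}\,\d x.
\end{equation*}
At each point I will diagonalize $\diff^2 u=\operatorname{diag}(\lambda_1,\dots,\lambda_d)$ in a suitable orthonormal frame and write $\diff u/|\diff u|=(a_1,\dots,a_d)$, so the integrand on the right rewrites as $(|\diff u|^2/u)\sum_i\lambda_i(2a_i^2+1)$. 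The crucial pointwise estimate then follows from Cauchy--Schwarz and $\sum_ia_i^4\le\bigl(\sum_ia_i^2\bigr)^2=1$:
\begin{equation*}
\Bigl(\sum_i\lambda_i(2a_i^2+1)\Bigr)^2\le\|\diff^2u\|^2\sum_i(2a_i^2+1)^2\le\|\diff^2u\|^2\bigl(4\sum_ia_i^4+4+d\bigr)\le(d+8)\|\diff^2u\|^2.
\end{equation*}
Integrating, applying Cauchy--Schwarz in $L^2(\Omega)$, and invoking the already proved first part of the lemma to replace $\int\|\diff^2u\|^2$ by $\int(\Delta u)^2$, one obtains $\int|\diff u|^4/u^2\le(d+8)\int(\Delta u)^2$, which upon rewriting $|\diff u|^4/u^2=16|\diff\sqrt u|^4$ is exactly \eqref{eq.villani}.

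The technical hurdle I expect is the passage $\eps\downarrow 0$ near the set $\{u=0\}$: one must show that the $\eps$-uniform bound on $\|\diff\sqrt{u+\eps}\|_{L^4}$ actually transfers to $\sqrt u$ and identifies its weak gradient (vanishing on $\{u=0\}$, equal to $\diff u/(2\sqrt u)$ elsewhere). This will be handled by first extracting a weak $L^4$ limit of $\diff\sqrt{u+\eps}$, identifying it with $\diff\sqrt u$ on $\{u>0\}$ via dominated convergence, and then using the fact that $\sqrt{u+\eps}\to\sqrt u$ strongly in $L^2(\Omega)$ together with the uniform $W^{1,4}$-bound to conclude that $\sqrt u\in W^{1,4}(\Omega)$ with the quantitative estimate \eqref{eq.villani} inherited from the smooth, strictly positive case.
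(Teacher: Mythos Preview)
Your proof is correct and follows essentially the same route as the paper: the argument for \eqref{eq.laplace} via double integration by parts and the sign of the boundary term from convexity is identical, and for \eqref{eq.villani} the paper simply cites \cite[Lemma 3.1]{GianSavaTosc}, whose content is precisely the diagonalization and pointwise Cauchy--Schwarz argument you carry out to obtain the constant $d+8$. In other words, you have unpacked the cited lemma rather than taken a different path.
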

  \begin{proof}
    By density, it obviously suffices to prove \eqref{eq.laplace}
    for $u\in C^\infty(\overline\Omega)$ satisfying  $\nml\cdot\diff u=0$ on $\partial\Omega$.
    Integrating by parts, it follows
    \begin{align*}
      \int_\Omega \|\diff^2u\|^2\,\d x &= \int_{\partial\Omega} \nml\cdot\diff^2u\cdot\diff u\,\d \sigma - \int_\Omega \diff\Delta u\cdot\diff u\,\d x \\
      &= \int_{\partial\Omega} \nml\cdot\diff^2u\cdot\diff u\,\d \sigma - \int_{\partial\Omega} \nml\cdot\diff u \Delta u\,\d \sigma + \int_\Omega (\Delta u)^2\,\d x .
    \end{align*}
    Now observe that the second boundary integral vanishes because of the no-flux boundary conditions,
    whereas the integrand in the first is pointwise non-negative,
    \begin{align}\label{inconv}
      \nml\cdot\diff^2u\cdot\diff u \leq 0.
    \end{align}
    (For a proof of this classical fact, see e.g. \cite[Lemma 5.2]{GianSavaTosc}.)
    This shows the first inequality in \eqref{eq.laplace}.
    The second inequality follows by observing the pointwise relation
    \begin{align*}
      (\Delta u)^2 = \sum_{i,j=1}^d \partial_{ii}u\partial_{jj}u \leq \frac12 \sum_{i,j=1}^d \big( \partial_{ii}u \big)^2 + \big(\partial_{jj}u \big)^2
      = d \sum_{i=1}^d \big( \partial_{ii}u \big)^2 \leq d \|\diff^2u\|^2 .
    \end{align*}
    The estimate \eqref{eq.villani} follows by combination of \cite[Lemma 3.1]{GianSavaTosc}
    with \eqref{eq.laplace} above.
  \end{proof}
  The next lemma summarizes selected properties of solutions to the viscous conservation law \eqref{eq.claw2}.
  These are used in the calculation of the right derivative of the energy $\energy$ along the corresponding flow.
  \begin{lemma}
    \label{lem.tootechnical}
    Assume that $\Omega$ is a smooth bounded domain,
    $\Mob$ satisfies \eqref{Mob1}, \eqref{MobL} and $V$ satisfies \eqref{hp:V}.
\EEE
 If $\aux_0\in\regdens$, then there exists a unique smooth classical solution 
    $\aux(s,x)$ of problem \eqref{eq.claw2} and $\aux(s,\cdot)\in\regdens$.
    If $\aux_0$ and $\bar\aux_0$ are two initial conditions in $\regdens$, then the $L^1$-contractivity estimate
    \begin{align*}
      \| \aux_s - \bar\aux_s \|_{L^1(\Omega)} \le \|\aux_0-\bar\aux_0\|_{L^1(\Omega)}
    \end{align*}
    holds at any time $s\ge0$ for the associated solutions $\aux$, $\bar\aux$. 
    If $\aux_0\in H^2(\Omega)$, then $\aux_s\in H^2(\Omega)$ for every $s\geq0$,
    \begin{equation}\label{eq.regularity1}
      \Delta \aux, \, \dv(\Mob(\aux)\diff V)\in L^\infty(0,+\infty;L^2(\Omega)),
    \end{equation}
    and the maps
    \begin{equation}\label{eq.continuity1}
      s\mapsto\Delta \aux_s, \quad  s\mapsto\dv(\Mob(\aux_s)\diff V) \quad \text{are right continuous with values in } L^2(\Omega).
    \end{equation}
  \end{lemma}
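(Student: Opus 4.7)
The strategy is to treat \eqref{eq.claw2} as a uniformly parabolic semilinear equation and invoke classical theory, rewriting it as
\begin{equation*}
\partial_s \aux_s - \eps\Delta \aux_s = \mob'(\aux_s)\diff \aux_s\cdot\diff V + \mob(\aux_s)\Delta V \qquad\text{in }\Omega,\qquad \nml\cdot\diff\aux_s=0\quad\text{on }\partial\Omega.
\end{equation*}
Since $\mob\in C^\infty(0,M)$ and $V\in C^\infty(\overline\Omega)$, the coefficients are smooth as long as $\aux_s$ stays in a compact subset of $(0,M)$. For $\aux_0\in\regdens$, classical results for quasilinear parabolic equations (Ladyzhenskaya--Solonnikov--Uraltseva, or Lieberman) yield a unique smooth classical local solution. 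To globalize and remain in $\regdens$, I would apply the comparison principle: since $\mob(0)=\mob(M)=0$ the constants $0$ and $M$ are stationary solutions of \eqref{eq.claw2}, so $0\le\aux_s\le M$, and then the strong maximum principle (or a Harnack-type argument for the linear equation with coefficient $\mob'(\aux)\nabla V$, which is bounded on compact subsets of $(0,M)$ by \eqref{MobL}) upgrades this to strict inequalities on every bounded time interval.

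For the $L^1$-contractivity, I would use a standard Kato/Kruzhkov regularization. The difference $w:=\aux-\bar\aux$ of two smooth solutions satisfies
\begin{equation*}
\partial_s w = \eps\Delta w + \dv\bigl((\mob(\aux)-\mob(\bar\aux))\diff V\bigr)
\end{equation*}
with Neumann data; testing against a smooth approximation $\operatorname{sgn}_\eta(w)$, integrating by parts (the boundary terms vanish thanks to $\diff V\cdot\nml=0$ and $\diff w\cdot\nml=0$), using that $\eps\Delta w$ contributes nonpositively and that $(\mob(\aux)-\mob(\bar\aux))\operatorname{sgn}'_\eta(w)\diff V\cdot\diff w$ vanishes in the limit $\eta\downarrow0$ by Lipschitz continuity of $\mob$, yields $\tfrac{d}{ds}\|w_s\|_{L^1}\le 0$.

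For the $H^2$ bound \eqref{eq.regularity1} and \eqref{eq.continuity1}, I would differentiate the equation in time. Setting $w_s:=\partial_s\aux_s$, parabolic bootstrap together with \eqref{MobL} guarantees that $w$ is a classical solution of the linearized problem
\begin{equation*}
\partial_s w = \eps\Delta w + \dv\bigl(\mob'(\aux)w\diff V\bigr),\qquad \nml\cdot\diff w=0\text{ on }\partial\Omega,
\end{equation*}
whose coefficients are uniformly bounded in $L^\infty$. An energy estimate (multiply by $w$, integrate, apply Young's inequality, and absorb the $\diff w$ term into $\eps\|\diff w\|_{L^2}^2$) gives
\begin{equation*}
\tfrac{d}{ds}\|w_s\|_{L^2}^2 \le C_\eps\|w_s\|_{L^2}^2,
\end{equation*}
so $\|w_s\|_{L^2}$ is controlled by $\|w_0\|_{L^2}=\|\eps\Delta \aux_0+\dv(\mob(\aux_0)\diff V)\|_{L^2}$, which is finite whenever $\aux_0\in H^2(\Omega)$ since $\mob$ is Lipschitz so that $\mob(\aux_0)\in H^1(\Omega)$. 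Reading \eqref{eq.claw2} as $\eps\Delta\aux_s=w_s-\dv(\mob(\aux_s)\diff V)$ and using that the two terms on the right are in $L^2$ gives \eqref{eq.regularity1}, while right continuity in \eqref{eq.continuity1} at every $s\ge0$ follows from the strong continuity of $w_s$ and $\aux_s$ in $L^2$ respectively $H^1$.

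The main technical obstacle is to obtain the \emph{uniform-in-time} bound in $L^\infty(0,\infty;L^2)$, since naive Grönwall from the energy estimate only gives exponential growth. To close this, I would exploit the gradient flow structure: once Proposition~\ref{prp.potential} is established, the metric derivative $|\aux'|(s)$ of the $\wassdelta$-gradient flow of $\potentialeps$ is bounded by the slope of $\potentialeps$, and the latter is nonincreasing along the flow, yielding a time-uniform $L^2$-type bound on $\partial_s\aux_s$ and, via the equation, on $\Delta\aux_s$. The right continuity at $s=0$ then reduces to showing that no energy is lost as $s\downarrow0$, which follows from the general theory of \cite{AmbrGiglSava} combined with the convexity properties established in Proposition~\ref{prp.potential}.
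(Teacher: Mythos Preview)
Your treatment of local existence, smoothness, and $L^1$-contractivity is essentially what the paper dismisses as ``standard methods.'' Two points, however, depart from the paper, and one of them contains a genuine gap.

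\medskip
\noindent\textbf{Invariance of $\regdens$.} The paper does not argue via the barriers $0,M$ plus the strong maximum principle. Instead it writes down an explicit one-parameter family of \emph{stationary} solutions
\[
\tilde v^c(x)=F^{-1}\bigl(c-\eps^{-1}V(x)\bigr),\qquad F(s)=\int_{s_0}^s\frac{d\sigma}{\mob(\sigma)},
\]
each of which lies in $\regdens$, and which fill out $(0,M)$ uniformly as $c\to\pm\infty$. Sandwiching $v_0$ between two such profiles and applying the comparison principle gives time-uniform strict bounds $0<\inf\tilde v^{c_1}\le v_s\le\sup\tilde v^{c_2}<M$ in one stroke; this is what keeps the coefficients smooth globally and lets the classical theory run for all $s$. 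Your strong-maximum-principle route yields $0<v_s<M$ at each fixed $s$ but does not obviously produce the uniform lower and upper bounds needed to continue the smooth solution.

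\medskip
\noindent\textbf{The uniform $H^2$ bound and right continuity.} Here your argument breaks. Invoking Proposition~\ref{prp.potential} inside the proof of Lemma~\ref{lem.tootechnical} is circular in the paper's logical structure: the proof of Proposition~\ref{prp.potential} explicitly \emph{uses} Lemma~\ref{lem.tootechnical} to know that $\SG_\eps$ is well defined, $L^1$-continuous, and leaves $\regdens$ invariant. Even if you separated the lemma into pieces to avoid strict circularity, your key claim that ``the slope of $\potentialeps$ is nonincreasing along the flow'' is false: Proposition~\ref{prp.potential} only delivers a $\lambda_\eps$-flow with $\lambda_\eps<0$, and for negative $\lambda$ the local slope can grow like $e^{-\lambda_\eps s}$---exactly the exponential blowup you were trying to cure. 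The paper's intended argument (visible in the source, commented out) is purely Hilbertian and independent of the Wasserstein machinery: one checks that
\[
A\rho:=-\eps\Delta\rho-\dv(\mob(\rho)\diff V)+\lambda\rho,\qquad \lambda=\frac{\|\mob'\|_\infty^2\|\diff V\|_\infty^2}{4\eps},
\]
is maximal monotone in $L^2(\Omega)$ with domain $H^2(\Omega)$, so the generator of \eqref{eq.claw2} is a Lipschitz perturbation of a maximal monotone operator, and Br\'ezis's theory \cite{BrezisOMM} then yields $v_s\in D(A)$ for all $s$, the $L^2$ control of $Av_s$, and the right continuity of $s\mapsto Av_s$ directly.
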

  \begin{proof}
    This lemma is deduced with standard methods for parabolic equations, see e.g. \cite{LSU},
    and we leave most of its proof to the interested reader.
    Here, we shall only comment on the well-definiteness of the flow
    on \GGG $\regdens$. \EEE
    From the classical theory, it follows that $\aux\in C^\infty(\setR_+\times\Omega)$,
    i.e., the generalized solution is smooth and classical for positive times, and is continuous initially.
    One then easily verifies, using \eqref{MobL}, that $\aux$ satisfies the comparison principle.
    Next, observe that \eqref{eq.claw2} admits a family of stationary solutions $(\tilde\aux^c)_{c\in\setR}$
    of the form
    \begin{align*}
      \tilde\aux^c (x) = F^{-1}\big(c-\eps^{-1}V(x)\big)
      \quad\text{with}\quad
      F(s) = \int_{s_0}^s \frac{\d\sigma}{\mob(\sigma)}.
    \end{align*}
    It is easily seen from the properties of $\mob$ and by smoothness of $V$,
    that $\tilde\aux^c\in\regdens$ for every $c\in\setR$,
    and that $\tilde\aux^c\downarrow0$ or $\tilde\aux^c\uparrow M$ uniformly on $\Omega$,
    for $c\downarrow-\infty$ or $c\uparrow\infty$, respectively.
    Hence, any solution $\aux$ can be sandwiched between two stationary solutions.
    Since $\aux_0\in \regdens$,
    one has $\aux(s)\in\regdens$ for all $s>0$.

\end{proof}

\end{appendix}

\end{document}